\newtheorem{theorem}{Theorem}[section]
\newtheorem{lemma}[theorem]{Lemma}
\newtheorem{corollary}[theorem]{Corollary}
\theoremstyle{definition}
\newtheorem{definition}[theorem]{Definition}
\newtheorem{remark}[theorem]{Remark}
\tikzset{symbol/.style={draw=none,every to/.append style={
      edge node={node [sloped, allow upside down, auto=false]{$#1$}}}}}
\newcommand{\ardual}{\ar[r, <->,squiggly]}
\newcommand{\ardualadj}{\ar[r, ->,squiggly, shift right] \ar[r, <-,squiggly, shift left]}
\newcommand{\arrestrict}{\ar[u, symbol=\leqslant]}
\newcommand{\arsemirestrict}{\ar[u, symbol=\preccurlyeq]}
\newcommand{\areqv}{\ar[r, <->]}
\newcommand{\upset}{\mathord\uparrow}
\newcommand{\downset}{\mathord\downarrow}
\newcommand{\cat}[1]{{\sf #1}\xspace}
\newcommand{\Top}{\cat{Top}}
\newcommand{\Sob}{\cat{Sob}}
\newcommand{\LCSob}{\cat{LKSob}}
\newcommand{\SLCSp}{\cat{StLKSp}}
\newcommand{\SKSp}{\cat{StKSp}}
\newcommand{\KHaus}{\cat{KHaus}}
\newcommand{\Frm}{\cat{Frm}}
\newcommand{\SFrm}{\cat{SFrm}}
\newcommand{\CFrm}{\cat{ConFrm}}
\newcommand{\SCFrm}{\cat{StCFrm}}
\newcommand{\SKFrm}{\cat{StKFrm}}
\newcommand{\KRFrm}{\cat{KRFrm}}
\newcommand{\LPries}{\cat{LPries}}
\newcommand{\SL}{\cat{SLPries}}
\newcommand{\CL}{\cat{ConLPries}}
\newcommand{\SCL}{\cat{StCLPries}}
\newcommand{\SKL}{\cat{StKLPries}}
\newcommand{\KRL}{\cat{KRLPries}}
\newcommand{\clopup}{{\sf ClopUp}}
\newcommand{\functor}[1]{\mathscr #1}
\newcommand{\cl}{\mathop{\sf cl}}
\renewcommand{\int}{\mathop{\sf int}}
\DeclareMathOperator{\reg}{reg}
\setlist[enumerate,1]{label={\upshape(\arabic*)}}
\keywords{Pointfree topology, spatial frame, continuous frame, stably compact frame, compact regular frame, sober space, locally compact space, stably compact space, compact Hausdorff space, Priestley duality}
\patchcmd{\@setaddresses}{\indent}{\noindent}{}{}
\patchcmd{\@setaddresses}{\indent}{\noindent}{}{}
\patchcmd{\@setaddresses}{\indent}{\noindent}{}{}
\patchcmd{\@setaddresses}{\indent}{\noindent}{}{}
\subjclass[2020]{18F70; 06D22; 06D50; 06E15; 54D30; 54D45}
\title{Deriving dualities in pointfree topology from Priestley duality}
\author{G.~Bezhanishvili and S.~Melzer}
\address{\newline
Department of Mathematical Sciences\newline
New Mexico State University\newline
Las Cruces, NM 88003\newline
USA\newline}
\email{guram@nmsu.edu}
\email{smelzer@nmsu.edu}
\begin{document}
\begin{abstract}
There are several prominent duality results in pointfree topology.
Hofmann-Lawson duality establishes that the category of continuous frames is dually equivalent to the category of locally compact sober spaces.
This restricts to a dual equivalence between the categories of stably continuous frames and stably locally compact spaces, which further restricts to Isbell duality between the categories of compact regular frames and compact Hausdorff spaces.
We show how to derive these dualities from Priestley duality for distributive lattices, thus shedding new light on these classic results.
\end{abstract}

\maketitle

\tableofcontents

\section{Introduction}
In pointfree topology there is a well-known dual adjunction between the category \Top of topological spaces and continuous maps and the category \Frm of frames and frame homomorphisms (see, e.g., \cite{DowkerPapert1966}).
Let \Sob be the full subcategory of \Top consisting of sober spaces and \SFrm the full subcategory of \Frm consisting of spatial frames.
The dual adjunction between \Top and \Frm then restricts to a dual equivalence between \Sob and \SFrm (see, e.g., \cite[Sec.~II-1]{Johnstone1982}).
Further restrictions yield the following classic results:

\begin{itemize}
    \item Hofmann-Lawson duality between the category \CFrm of continuous frames and proper frame homomorphisms and the category \LCSob of locally compact sober spaces and proper maps \cite{HofmannLawson1978}.
    \item A dual equivalence between the full subcategory \SCFrm of \CFrm consisting of stably continuous frames and the full subcategory \SLCSp of \LCSob consisting of stably locally compact spaces, which further restricts to a dual equivalence between the full subcategories \SKFrm of stably compact frames and \SKSp of stably compact spaces \cite{GierzKeimel1977, Johnstone1981, Simmons1982, Banaschewski1981}.
    \item Isbell duality between the full subcategory \KRFrm of \Frm consisting of compact regular frames and the full subcategory \KHaus of \Top consisting of compact Hausdorff spaces \cite{Isbell1972}. 
\end{itemize}

Note that every frame homomorphism between compact regular frames is proper, and hence \KRFrm is a full subcategory of \SKFrm.
Similarly, \KHaus is a full subcategory of \SKSp. 
We thus arrive at the following diagram, where a pair of squiggly arrows (\stackanchor{\reflectbox{$\rightsquigarrow$}}{$\rightsquigarrow$}) represents a dual adjunction and a squiggly left-right arrow ($\leftrightsquigarrow$) a dual equivalence. Also, $\cat{C}\leqslant \cat{D}$ stands for ``\cat{C} is a full subcategory of \cat{D}'' and $\cat{C} \preccurlyeq\cat{D}$ for ``\cat{C} is a non-full subcategory of \cat{D}.''

\begin{figure}[H]
    \begin{center}
        \begin{tikzcd}[ampersand replacement=\&, column sep={8em,between origins}, row sep=1em]
            \Frm \ardualadj \& \Top\\
            \SFrm \arrestrict \ardual \& \Sob \arrestrict\\
            \CFrm \arsemirestrict \ardual \& \LCSob \arsemirestrict\\
            \SCFrm  \ardual\arrestrict \& \SLCSp \arrestrict\\
            \SKFrm  \ardual\arrestrict \& \SKSp\arrestrict\\
            \KRFrm  \ardual\arrestrict \& \KHaus\arrestrict
        \end{tikzcd}
    \end{center}
    \caption{Correspondence between various categories of frames and spaces.} \label{diagram: introduction}
\end{figure}
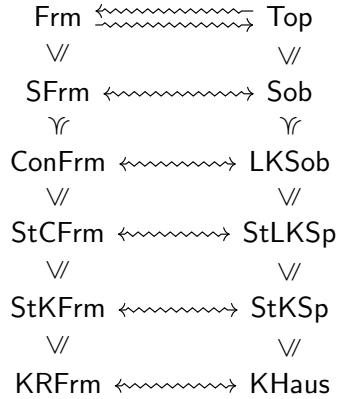

It is our aim to provide a different perspective on these dualities by utilizing Priestley duality \cite{Priestley1970,Priestley1972}, which establishes a dual equivalence between the categories \cat{DLat} of bounded distributive lattices and bounded lattice homomorphisms and \cat{Pries} of Priestley spaces and Priestley morphisms. 

It is well known (see, e.g., \cite[Sec.~II-3]{Johnstone1982}) that \cat{DLat} is equivalent to the category \cat{CohFrm} of coherent frames, which is dually equivalent to the category \cat{Spec} of spectral spaces.
We recall that a stably compact frame $L$ is {\em coherent} if compact elements join-generate $L$, and a stably compact space $X$ is {\em spectral} if compact opens form a basis for $X$.
Therefore, \cat{CohFrm} is a full subcategory of \SKFrm and \cat{Spec} is a full subcategory of \SKSp. By \cite{Cornish1975}, \cat{Spec} is isomorphic to \cat{Pries}.
Thus, Priestley duality can be derived from the dual equivalence of \cat{CohFrm} and \cat{Spec}.

On the other hand, since frames are special distributive lattices, they can be studied using the machinery of Priestley duality.
This line of research was initiated by Pultr and Sichler \cite{PultrSichler1988} who showed that Priestley duality restricts to a dual equivalence between \Frm and the category \LPries of what we call L-spaces and L-morphisms (see \cref{sec: 3}).
It was further developed in \cite{PultrSichler2000, BezhGhilardi2007, BezhanishviliGabelaiaJibladze2016, AvilaBezhanishviliMorandiZaldivar2020, AvilaBezhanishviliMorandiZaldivar2021} where various properties of frames were characterized in the language of their Priestley duals.
An alternative approach, using \cat{Spec} instead of \cat{Pries}, was investigated in \cite{Schwartz2013,DickmannSchwartzTressl2019}.

The exploration of Priestley spaces of frames has numerous applications, not only in pointfree topology, but in other areas as well.
For example, nuclei play an important role in pointfree topology as they are kernels of frame homomorphisms (see, e.g., \cite[p.~31]{PicadoPultr2012}).
But they also arise in logic as they model the so-called lax modality \cite{Goldblatt1981,FairtloughMendler1997}.
The resulting intuitionistic modal logic has various applications \cite{FairtloughMendler1995,AlechinaMendlerDePaiva2001,GargAbadi2008,Goldblatt2011,ArtemovProtopopescu2016}.
As was demonstrated in \cite{BezhanishviliHolliday2019}, nuclei also provide a unified semantic hierarchy for intuitionistic logic.
It turns out that nuclei have a rather natural description in the language of Priestley spaces, which has resulted in numerous insides in understanding the complicated structure of the frame of nuclei of a given frame (see, e.g., \cite{PultrSichler2000, BezhGhilardi2007, BezhanishviliGabelaiaJibladze2013, AvilaBezhanishviliMorandiZaldivar2020, AvilaBezhanishviliMorandiZaldivar2021}).

The goal of this paper is to continue the study of frames by means of their Priestley spaces.
In particular, we provide dual descriptions of the categories \SFrm, \CFrm, \SCFrm, \SKFrm, and \KRFrm in the language of Priestley spaces.
On the one hand, this yields an alternative proof of the dual equivalences mentioned at the beginning of the introduction, thus providing a new insight into these classic results in pointfree topology from the perspective of Priestley duality.
On the other hand, it gives rise to new subcategories of Priestley spaces that are equivalent to such important categories of topological spaces as \Sob, \LCSob, \SLCSp, \SKSp, and \KHaus.
It is our belief that results of this nature can provide further insight and cross-fertilization between these beautiful branches of mathematics. 

The paper is organized as follows.
\cref{sec: frm} introduces the categories of frames and spaces of interest, and presents the relevant dualities in more detail.
\cref{sec: 3} discusses Priestley duality and its restriction to frames.
\cref{sec: 3.5} characterizes spatial frames in the language of Priestley duality and connects the associated Priestley spaces with sober spaces.
\cref{sec: 4} further restricts this correspondence to continuous frames, their associated Priestley spaces, and locally compact sober spaces. This yields a new proof of Hofmann-Lawson duality.
In \cref{sec: 5} we derive the duality between stably continuous frames and stably locally compact spaces by describing stability in the language of Priestley spaces.
This also gives a new proof of the duality between stably compact frames and stably 
compact spaces.
Finally, \cref{sec: 6} describes regularity in the language of Priestley spaces, thus providing an 
alternative proof of Isbell duality. 

\section{Frames and spaces} \label{sec: frm}
We recall (see, e.g., \cite[p.~10]{PicadoPultr2012}) that a {\em frame} is a complete lattice $L$ satisfying the join-infinite distributive law 
\begin{equation*}
    a \wedge \bigvee S = \bigvee \{a \wedge s \mid s \in S\}
\end{equation*}
for every $a \in L$ and $S \subseteq L$. A {\em frame homomorphism} is a map between frames that preserves finite 
meets and arbitrary joins.
Let \Frm be the category of frames and frame homomorphisms.

A filter $F$ of a frame $L$ is \emph{completely prime} if $\bigvee S \in F$ implies $S \cap F \neq \varnothing$ and \emph{Scott-open} if $\bigvee S \in F$ implies $\bigvee T \in F$ for some finite $T \subseteq S$.
Clearly each completely prime filter is Scott-open. In fact, Scott-open filters are exactly the intersections of completely prime filters (see, e.g., \cite[p.~101]{Vickers1989}). 

A frame $L$ is \emph{spatial} if completely prime filters separate elements of $L$; that is, $a \not \leq b$ in $L$ implies that there is a completely prime filter $F$ with $a \in F$ and $b \not \in F$.
Equivalently, $L$ is spatial iff Scott-open filters separate elements of $L$.
It is well known (see, e.g., \cite[p.~18]{PicadoPultr2012}) that $L$ is spatial iff $L$ is isomorphic to the frame $\mathcal O(X)$ of open sets of a topological space $X$ (hence the name). 
Let \SFrm be the full subcategory of \Frm consisting of spatial frames.

There are two relations on frames that are of particular importance to us. We recall that if $L$ is a frame and $a\in L$, then the {\em pseudocomplement} of $a$ is $a^*=\bigvee\{x \in L \mid a\wedge x=0\}$. 

\begin{definition}
    Let $L$ be a frame and $a,b \in L$.
    \begin{enumerate}
        \item (see, e.g., \cite[p.~49]{Compendium2003}) We say that $a$ is {\em way below} $b$ and write $a\ll b$ if for each $S\subseteq L$, from $b\le \bigvee S$ it follows that $a\le\bigvee T$ for some finite $T\subseteq S$.
        \item (see, e.g., \cite[p.~80]{Johnstone1982}) We say that $a$ is {\em well inside} $b$ and write $a\prec b$ if $a^*\vee b=1$.
    \end{enumerate} 
\end{definition}

A frame $L$ is {\em continuous} if 
\[
    a = \bigvee\{b \in L \mid b \ll a\}
\] 
and $L$ is {\em regular} if
\[
    a = \bigvee\{b \in L \mid b \prec a\}
\]
for all $a \in L$. 

Each frame homomorphism $h:L\to M$ preserves $\prec$ (that is, $a\prec b$ implies $h(a) \prec h(b)$), but may not 
preserve $\ll$. We call $h$ {\em proper} provided $h$ preserves $\ll$ (that is, $a\ll b$ implies 
$h(a) \ll h(b)$).

\begin{definition}
    Let \CFrm be the category of continuous frames and proper frame homomorphisms between them.  
\end{definition}

We call an element $a$ of a frame $L$ {\em compact} if $a\ll a$ and the frame $L$ {\em compact} if the top element $1$ 
is compact. We also call $\ll$ {\em stable} if $a \ll b,c$ implies $a \ll b \wedge c$ for all $a,b,c \in L$. 
\begin{definition}
    \begin{enumerate}
        \item[]
        \item (see, e.g., \cite[p.~488]{Compendium2003}) A frame $L$ is {\em stably continuous} if $L$ is continuous 
        and $\ll$ is stable. Let \SCFrm be the full subcategory of \CFrm consisting of stably continuous frames.
        \item (see, e.g., \cite[p.~488]{Compendium2003}) A frame $L$ is {\em stably compact} if $L$ is compact and 
        stably continuous. Let \SKFrm be the full subcategory of \SCFrm consisting of stably compact frames.
        \item (see, e.g., \cite[p.~133]{PicadoPultr2012}) Let \KRFrm be the full subcategory of \Frm consisting of 
        compact regular frames.
    \end{enumerate}
\end{definition}

We note that if $L$ is compact, then $a\prec b$ implies $a\ll b$, and if $L$ is regular, then $a\ll b$ implies 
$a\prec b$ (see, e.g., \cite[Lem.~VII-5.2.1]{PicadoPultr2012}). Therefore, if $L$ is compact regular, then the way below 
and well inside relations on $L$ coincide. Thus, since $a\prec b,c$ implies $a\prec b\wedge c$, every compact regular 
frame is stably compact. Consequently, \KRFrm is a full subcategory of \SKFrm.

The next table contains the categories of frames that we will be concerned with in this paper.

\begin{table}[H]
    \begin{tabular}{lll}
        \toprule 
        \bf Category & \bf Objects & \bf Morphisms \\ \midrule
        \Frm & Frames & Frame homomorphisms\\
        \SFrm& Spatial frames & Frame homomorphisms\\
        \CFrm  & Continuous frames        & Proper frame homomorphisms \\
        \SCFrm & Stably continuous frames & Proper frame homomorphisms \\
        \SKFrm & Stably compact frames    & Proper frame homomorphisms \\ 
        \KRFrm & Compact regular frames   & Frame homomorphisms \\
        \bottomrule
    \end{tabular}
    \caption{Categories of frames.\label{table:frames}}
\end{table}

We next turn our attention to the categories of spaces that correspond to the above categories of frames. The 
definitions that follow are well known; see, e.g., \cite{Compendium2003}.

For a partially ordered set $P$ and $S \subseteq P$, we write 
\begin{center}
    $\upset S = \{x \in P \mid s \leq x \text{ for some } s \in S\}$ 
    \quad and \quad 
    $\downset S = \{x \in P \mid x \leq s \text{ for some } s \in S\}$.
\end{center} 
Then $S$ is an \emph{upset} if $S = \upset S$, $S$ is a \emph{downset} if $S = \downset S$, and $S$ is a \emph{biset} 
if it is both an upset and a downset. For a singleton $S = \{x\}$ we write $\upset x$ for $\upset S$ and $\downset x$ 
for $\downset S$. 

Let $X$ be a topological space. We recall that a closed subset $A$ of $X$ is {\em irreducible} if $A$ cannot be written 
as a union of two closed proper subsets, and that $X$ is {\em sober} if each irreducible subset of $X$ is the closure 
of a unique point in $X$. In particular, every sober space is $T_0$. 

The space $X$ is \emph{locally compact} if for each open set $U$ and $x \in U$ there is an open set $V$ and a compact 
set $K$ such that $x \in V \subseteq K \subseteq U$. A subset of $X$ is {\em saturated} if it is an intersection of 
open sets, and $X$ is {\em coherent} if the intersection of two compact saturated sets is again compact. 

The \emph{specialization preorder} $\leq$ on $X$ is defined by $x \leq y$ iff $x \in \cl(\{y\})$, where $\cl$ is 
topological closure. Observe that saturated sets of $X$ are exactly upsets in the specialization preorder, which is a 
partial order iff $X$ is a $T_0$-space. 

\begin{definition}
    \begin{enumerate}[ref=\thedefinition(\arabic*)]
        \item[]
        \item Let \Top be the category of topological spaces and continuous maps, and let \Sob be the full subcategory 
        of \Top consisting of sober spaces. 
        \item A continuous map $f : X \to Y$ between topological spaces is {\em proper} if 
        \label[definition]{def:2.4(2)}
        \begin{enumerate}
            \item[(i)] $\downset f(A)$ is closed for each closed set $A \subseteq X$, where $\downset$ is the downset 
            in the specialization preorder on $X$. 
            \item[(ii)] $f^{-1}(B)$ is compact for each compact saturated set $B \subseteq Y$.
        \end{enumerate}
        \item Let \LCSob be the category of locally compact sober spaces and proper maps between them.
        \item We call $X$ {\em stably locally compact} if $X$ is locally compact, sober, and coherent. Let \SLCSp be 
        the full subcategory of \LCSob consisting of stably locally compact spaces. 
        \item We call $X$ {\em stably compact} if $X$ is compact and stably locally compact. Let \SKSp be the full 
        subcategory of \SLCSp consisting of stably compact spaces.
        \item Let \KHaus be the full subcategory of \Sob consisting of compact Hausdorff spaces.
    \end{enumerate}
\end{definition}

\begin{remark}
    \begin{enumerate}[ref=\theremark(\arabic*)]
        \item[]
        \item By \cite[Lem.~VI-6.21]{Compendium2003}, if $X$ is sober and $Y$ is locally compact, then (i) follows 
        from (ii) in \cref{def:2.4(2)}. \label[remark]{rem:2.5}
        \item In compact Hausdorff spaces, the specialization order is the identity. Hence, compact saturated sets are 
        simply closed sets. Therefore, since every compact Hausdorff space is sober and locally compact, \KHaus is a 
        full subcategory of \SKSp. 
    \end{enumerate}
\end{remark}

The next table contains the categories of topological spaces that we are interested in.

\begin{table}[H]
\begin{tabular}{lll}
    \toprule 
    \bf Category & \bf Objects & \bf Morphisms \\ \midrule
    \Top & Topological spaces & Continuous maps\\
    \Sob & Sober spaces & Continuous maps\\
    \LCSob & Locally compact sober spaces  & Proper maps \\
    \SLCSp & Stably locally compact spaces & Proper maps \\
    \SKSp  & Stably compact spaces         & Proper maps \\ 
    \KHaus & Compact Hausdorff spaces      & Continuous maps\\
    \bottomrule
\end{tabular}
\caption{Categories of spaces.\label{table:spaces}}
\end{table}

There is a well-known dual adjunction $(\mathcal O, pt)$ between \Top and \Frm (see, e.g., \cite{DowkerPapert1966}). 
The contravariant functors $\mathcal O : \Top \to \Frm$ and $pt : \Frm \to \Top$ are constructed as follows. The functor 
$\mathcal O$ maps a topological space $X$ to its frame of open sets, and a continuous map $f : X \to Y$ to 
$f^{-1} : \mathcal O(Y)\to\mathcal O(X)$. The functor $pt$ maps a frame $L$ to its space of points, where a 
\emph{point} is a completely prime filter of $L$. The topology on $pt(L)$ is the range of the map 
$\zeta : L \to \wp(pt (L))$ given by $\zeta(a) = \{x \in pt(L) \mid a \in x\}$, where $\wp(X)$ denotes the 
powerset of a set $X$. A frame homomorphism $h : L \to M$ is mapped to $h^{-1} : pt(M) \to pt(L)$. 

Restricting the range of these functors yields the following well-known duality results:

\begin{theorem} \label{thm:dualities}
    \begin{enumerate}[ref=\thetheorem(\arabic*)]
        \item[]
        \item {\SFrm} is dually equivalent to {\Sob}. \label[theorem]{SFrm-deqv-Sob}
        \item {\em (Hofmann-Lawson duality)} {\CFrm} is dually equivalent to {\LCSob}. 
        \item {\SCFrm} is dually equivalent to {\SLCSp}. \label[theorem]{SCFrm-deqv-SLCsp}
        \item {\SKFrm} is dually equivalent to {\SKSp}. \label[theorem]{SKFrm-deqv-SKsp}
        \item {\em (Isbell duality)} {\KRFrm} is dually equivalent to {\KHaus}.
    \end{enumerate}
\end{theorem}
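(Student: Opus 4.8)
The plan is to prove all five dualities together by restricting the dual adjunction $(\mathcal O, pt)$ to successively smaller subcategories, using the first claim as the foundation. The dual adjunction is witnessed by the natural maps $\eta_X : X \to pt(\mathcal O(X))$, sending $x$ to the completely prime filter $\{U \in \mathcal O(X) \mid x \in U\}$, and $\zeta : L \to \mathcal O(pt(L))$ from the construction of $pt$. First I would observe that $\eta_X$ is a homeomorphism precisely when $X$ is sober and that $\zeta$ is an isomorphism precisely when $L$ is spatial, since unwinding the definitions these are exactly the two separation conditions. Together with the routine facts that $pt(L)$ is always sober and $\mathcal O(X)$ is always spatial, this places the functors in the correct subcategories, and by the standard fact that a dual adjunction restricts to a dual equivalence between the full subcategories of objects at which these maps are isomorphisms, we obtain \cref{SFrm-deqv-Sob}.

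For the remaining dualities the strategy is the same: restrict the dual equivalence of \cref{SFrm-deqv-Sob} further, which calls for a dictionary translating each frame-theoretic property into its spatial counterpart and matching the two morphism classes. On objects I would establish, for a sober $X$ with $L = \mathcal O(X)$, the correspondences: $L$ is continuous iff $X$ is locally compact; once this holds, $\ll$ is stable iff $X$ is coherent; $L$ is compact iff $pt(L)$ is compact; and a compact $L$ is regular iff $pt(L)$ is Hausdorff. The crucial lemma underlying the first is that in $\mathcal O(X)$ one has $U \ll V$ iff there is a compact saturated $K$ with $U \subseteq K \subseteq V$, so that continuity of $L$ is exactly the witness for local compactness; stability of $\ll$ then transcribes into binary intersections of compact saturated sets being compact, and the regular/Hausdorff equivalence uses that in a compact regular frame $\prec$ and $\ll$ coincide while in a compact Hausdorff space the specialization order is trivial and compact saturated sets are just closed sets.

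The main obstacle is the morphism correspondence at the proper-map levels, i.e.\ Hofmann--Lawson duality and its restrictions \cref{SCFrm-deqv-SLCsp} and \cref{SKFrm-deqv-SKsp}. Here I must show that for a continuous map $f : X \to Y$ between locally compact sober spaces, the frame homomorphism $f^{-1} : \mathcal O(Y) \to \mathcal O(X)$ preserves $\ll$ --- that is, is proper in the frame sense --- exactly when $f$ is proper in the sense of \cref{def:2.4(2)}. I expect the real content to lie in condition (ii), that $f^{-1}$ sends compact saturated sets to compact sets, which I would tie to preservation of $\ll$ through the compact-containment characterization of way-below; condition (i) should then follow for free by \cref{rem:2.5}. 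With this dictionary in hand, each of these dualities follows by restricting the equivalence of \cref{SFrm-deqv-Sob} to the appropriate subcategory --- non-full at these levels because the morphisms are cut down to proper ones.

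For Isbell duality the morphism difficulty evaporates: I would verify that every frame homomorphism between compact regular frames is automatically proper (there $\prec$ and $\ll$ agree and frame homomorphisms always preserve $\prec$) and, symmetrically, that every continuous map between compact Hausdorff spaces is proper (such maps are closed, and preimages of closed --- hence compact saturated --- sets are compact). Thus \KRFrm and \KHaus enter as full subcategories at the bottom of the diagram, and the regular/Hausdorff correspondence above restricts the equivalence of \cref{SFrm-deqv-Sob} to yield the final claim.
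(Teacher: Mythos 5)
Your proposal is correct in outline, but it follows the classical route rather than the one this paper takes. You restrict the dual adjunction $(\mathcal O, pt)$ directly: sobriety and spatiality are precisely the conditions under which the unit and counit are isomorphisms, and the remaining dualities come from a dictionary (continuous $\leftrightarrow$ locally compact, stability of $\ll$ $\leftrightarrow$ coherence, compact regular $\leftrightarrow$ compact Hausdorff, proper frame homomorphisms $\leftrightarrow$ proper maps) built on the compact-saturated-interpolation description of $\ll$ and the Hofmann--Mislove theorem. This is essentially the proof in the sources the paper cites for \cref{thm:dualities}. The paper instead derives all five dualities from Priestley duality: \Frm is dually equivalent to the category \LPries of L-spaces (\cref{thm: PS}); spatiality, continuity, stability, compactness, and regularity are then characterized on the Priestley side via density of the localic part, kernels $\ker U$, Scott upsets, and regular parts $\reg U$ (\cref{thm: spatial,thm:hp-iff-continuous,thm:shp-iff-SCFrm,thm:shtp-iff-stk,thm: compact regular}); and the topological categories are recovered as images of the localic-part functor $\functor Y$ (\cref{cor: sl=sob,thm:hl-spaces,thm: st loc comp,thm: tpl = stk,RPL=KHaus}), so each duality factors as a restriction of Pultr--Sichler duality composed with an equivalence \SL $\to$ \Sob. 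Your route is shorter and self-contained; the paper's route is the point of the paper---it yields a uniform derivation of all five results from a single discrete duality and produces new Priestley-theoretic characterizations along the way. One caution on your sketch: the ``crucial lemma'' that $U \ll V$ iff there is a compact saturated $K$ with $U \subseteq K \subseteq V$ is not an iff for arbitrary sober spaces---only the right-to-left direction is automatic, since $\{W \mid U \ll W\}$ need not be a filter. The forward direction requires interpolation of $\ll$ (i.e., continuity of $\mathcal O(X)$, as in the paper's \cref{lemma:a-ll-b-implies-c-inbetween}) together with Hofmann--Mislove, or local compactness of $X$; your argument should be ordered so that one of these hypotheses is in force before the lemma is invoked.
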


For \cref{SFrm-deqv-Sob} see, e.g., \cite[Sec.~II-1]{Johnstone1982}. Hofmann-Lawson duality was established in 
\cite{HofmannLawson1978} (see also \cite[Prop.~V-5.20]{Compendium2003}). \cref{SCFrm-deqv-SLCsp,SKFrm-deqv-SKsp} go 
back to \cite{GierzKeimel1977, Johnstone1981, Simmons1982, Banaschewski1981} (see also 
\cite[Thm.~VI-7.4]{Compendium2003}). Isbell duality was established in \cite{Isbell1972} (see also 
\cite{BanaschweskiMulvey1980} and \cite[Sec.~VII-4]{Johnstone1982}). We thus obtain the diagram in 
\cref{diagram: introduction}.

\section{Priestley duality for frames} \label{sec: 3}

As is customary, we call a subset of a topological space $X$ \emph{clopen} if it is both closed and open. Then $X$ is 
\emph{zero-dimensional} if it has a basis of clopen sets. {\em Stone spaces} are zero-dimensional compact Hausdorff 
spaces. 

\begin{definition}
    A \emph{Priestley space} is a pair $(X, \leq)$ where $X$ is a compact space and $\leq$ is a partial order on $X$ 
    satisfying the \emph{Priestley separation axiom}:
    \begin{center}
        If $x\nleq y$, then there is a clopen upset $U$ with $x \in U$ and $y \not \in U$.
    \end{center}
\end{definition}

For a Priestley space $(X, \leq)$ we simply write $X$ and note that every Priestley space is a Stone space.

A \emph{Priestley morphism} is a continuous map $f : X \to Y$ between Priestley spaces that is order-preserving. Let 
\cat{Pries} be the category of Priestley spaces and Priestley morphisms. Let also \cat{DLat} by the category of bounded 
distributive lattices and bounded lattice homomorphisms.

\begin{theorem}[Priestley duality] \label{thm: PD}
     \cat{Pries} is dually equivalent to \cat{DLat}.
\end{theorem}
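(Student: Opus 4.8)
The plan is to construct a pair of contravariant functors between \cat{DLat} and \cat{Pries} and to show they form a dual equivalence by exhibiting natural isomorphisms on both sides.

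First I would describe the object assignments. Given a bounded distributive lattice $D$, I would take the set $X_D$ of prime filters of $D$ (equivalently, by duality, the prime ideals, but prime filters are more convenient for upsets). I would topologize $X_D$ using the sets $\varphi(a) = \{P \in X_D \mid a \in P\}$ together with their complements $X_D \setminus \varphi(a)$ as a subbasis, and order $X_D$ by inclusion. The first block of verifications is that $(X_D, \subseteq)$ is a Priestley space: compactness follows from a standard argument invoking the prime filter theorem (an application of the Boolean prime ideal theorem / ultrafilter-type compactness), Hausdorffness and the Priestley separation axiom both follow from the observation that each $\varphi(a)$ is a clopen upset, and that if $P \nsubseteq Q$ then some $a \in P \setminus Q$ yields a separating clopen upset $\varphi(a)$. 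In the reverse direction, given a Priestley space $X$ I would assign the bounded distributive lattice \clopup$(X)$ of clopen upsets of $X$, ordered by inclusion, which is closed under finite unions and intersections and contains $\varnothing$ and $X$.

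Next I would define the functors on morphisms. A bounded lattice homomorphism $h : D \to E$ induces $h^{-1} : X_E \to X_D$ sending a prime filter $P$ of $E$ to $h^{-1}(P)$, which is a prime filter of $D$; I would check this map is continuous (the preimage of $\varphi(a)$ is $\varphi(h(a))$) and order-preserving, hence a Priestley morphism. Dually, a Priestley morphism $f : X \to Y$ induces $f^{-1} : \clopup(Y) \to \clopup(X)$, which is well defined because the preimage of a clopen upset under a continuous order-preserving map is again a clopen upset, and it is clearly a bounded lattice homomorphism. Functoriality (respecting identities and composition, with the contravariant reversal) is routine.

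Finally I would establish the two natural isomorphisms witnessing the dual equivalence. On the lattice side, the map $\varphi : D \to \clopup(X_D)$, $a \mapsto \varphi(a)$, is the unit-type transformation; I would show it is an isomorphism of bounded distributive lattices. Injectivity and the fact that it reflects order both reduce to the prime filter separation theorem (if $a \nleq b$ there is a prime filter containing $a$ but not $b$), and surjectivity onto \clopup$(X_D)$ follows from compactness: every clopen upset, being clopen, is a finite union of basic clopens, and being an upset forces it into the image of $\varphi$. On the space side, the map $\varepsilon_X : X \to X_{\clopup(X)}$ sending $x$ to the prime filter $\{U \in \clopup(X) \mid x \in U\}$ is a homeomorphism and an order-isomorphism; injectivity and order-reflection come from the Priestley separation axiom, surjectivity from a compactness argument showing every prime filter of clopen upsets is of this form, and bicontinuity from matching the two topologies. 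I expect the main obstacle to be this last surjectivity claim for $\varepsilon_X$, namely that every prime filter of \clopup$(X)$ arises as the clopen-upset neighborhoods of a point: one must produce the point as the unique element in the intersection $\bigcap \{U \mid U \in P\} \cap \bigcap \{X \setminus V \mid V \notin P\}$, and show this intersection is a singleton by using compactness together with the Priestley separation axiom to rule out emptiness and multiplicity. Naturality of both transformations in the respective variables is then a direct diagram chase.
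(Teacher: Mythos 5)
Your proposal is correct and follows the standard construction of Priestley duality: the paper itself states this theorem as a classical result (citing Priestley's original papers) without giving a proof, and the functors and natural isomorphisms you build --- the prime-filter space $X_D$ with the Stone-map subbasis, the lattice $\clopup(X)$ of clopen upsets, and the units $\varphi$ and $\varepsilon$ --- are exactly those the paper records in the remark following the theorem. The key steps you flag (compactness of $X_D$ via the prime filter theorem, surjectivity of $\varphi$ via compactness of clopen upsets, and surjectivity of $\varepsilon$ via the finite intersection property for a prime filter of clopen upsets) are precisely where the substance of the classical proof lies, so nothing further is needed.
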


\begin{remark}
    \begin{enumerate}[(1), ref=\theremark(\arabic*)]
        \item[]
        \item For a bounded distributive lattice $D$, the \emph{Priestley space} $X_D$ of $D$ is given by the set of
        prime filters of $D$ ordered by inclusion and topologized by the subbasis 
        $$\{\varphi(a) \mid a \in D\} \cup \{\varphi(a)^c \mid a \in D\},$$
        where $\varphi : D \to \wp(X_D)$ is the Stone map given by $\varphi(a) = \{x \in X_D \mid a \in x\}$ for each 
        $a \in D$.
        \item \label[remark]{rem: X and D} The contravariant functors $\functor X : \cat{DLat} \to \cat{Pries}$ and 
        $\functor D : \cat{Pries} \to \cat{DLat}$ establishing Priestley duality are described as follows. The functor 
        $\functor X$ sends a bounded distributive lattice $D$ to the Priestley space $X_D$ of $D$ and a bounded lattice 
        homomorphism $h : D \to E$ to the Priestley morphism $h^{-1} : X_E \to X_D$. The functor $\functor D$ sends a 
        Priestley space $X$ to the bounded distributive lattice $\clopup(X)$ of clopen upsets of $X$ and a Priestley 
        morphism $f : X \to Y$ to the bounded lattice homomorphism $f^{-1} : \clopup(Y) \to \clopup(X)$. The natural 
        isomorphisms are given by $\varphi : D \to \functor{D}\functor{X} D$ defined above and 
        $\varepsilon : X \to \functor{X}\functor{D} X$ defined by $\varepsilon(x) = \{U \in \clopup(X) \mid x \in U\}$ for 
        each $x \in X$.
    \end{enumerate}
\end{remark}

Since frames are special bounded distributive lattices, we can restrict Priestley duality to obtain a category that is 
dual to \Frm. It is well known that a bounded distributive lattice is a frame iff it is a complete Heyting algebra 
(see, e.g., \cite[Prop.~1.5.4]{Esakia2019}). Therefore, we can describe the dual category of \Frm using Esakia duality 
\cite{Esakia1974}. We recall that a Priestley space is an \emph{Esakia space} if $U$ clopen implies that $\downset U$ 
is clopen. The following are well-known properties of Priestley and Esakia spaces that we will use frequently. For a 
subset $F$ of a poset $X$, we write $\min(F)$ and $\max(F)$ for the sets of minimal and maximal elements of $F$, 
respectively.

\begin{lemma}[see, e.g., \cite{Priestley1984,Esakia2019}] \label{lemma:properties-Priestley}
    Let $X$ be a Priestley space.
\begin{enumerate}[ref=\thetheorem(\arabic*)]
\item The collection of clopen upsets and clopen downsets of $X$ forms a subbasis for $X$. \label[lemma]{lemma:properties-Priestley-0}
    \item Every open upset/downset is a union of clopen upsets/downsets. \label[lemma]{lemma:properties-Priestley-1}
    \item Every closed upset/downset is an intersection of clopen upsets/downsets. \label[lemma]{lemma:properties-Priestley-2}
    \item \label[lemma]{lemma:properties-Priestley-2.5} If $F \subseteq X$ is closed, then both $\upset F$ and $\downset F$ are closed.
    \item \label[lemma]{lemma:properties-Priestley-3} If $F$ is a closed set, then $\min(F)$ and $\max(F)$ are nonempty. In fact, for every $x \in F$ there exist $y \in \min(F)$ and $z \in \max(F)$ such that $y \leq x \leq z$. Consequently, if $F$ is a closed upset, then $F = \upset \min(F)$ and if $F$ is a closed downset, then $F = {\downarrow} \max(F)$.
    \item If $\mathcal P$ is a prime filter of  $\clopup(X)$, then $\bigcap \mathcal P = \upset x$ for a unique $x \in X$. \label[lemma]{lemma:properties-Priestley-4}
\end{enumerate}
Suppose additionally that $X$ is an Esakia space.
\begin{enumerate}[ref=\thetheorem(\arabic*), resume*]
    \item $\cl \upset F = \upset \cl F$. Consequently, the closure of an upset is an upset and the interior of a downset is a downset. \label[lemma]{lem:esakia-int-downsets}
\end{enumerate}
\end{lemma}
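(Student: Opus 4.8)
The plan is to treat the seven clauses in order of logical dependence, building everything on the Priestley separation axiom together with compactness of $X$. I would first dispose of \cref{lemma:properties-Priestley-0}: the topology $\tau'$ generated by the clopen upsets and clopen downsets is coarser than the original topology $\tau$, and the separation axiom makes $\tau'$ Hausdorff (if $x\ne y$ then $x\nleq y$ or $y\nleq x$, and a separating clopen upset together with its complement, a clopen downset, separates the two points); since $(X,\tau)$ is compact and the identity $(X,\tau)\to(X,\tau')$ is a continuous bijection onto a Hausdorff space, it is a homeomorphism, so $\tau=\tau'$. For \cref{lemma:properties-Priestley-1,lemma:properties-Priestley-2} I would run the standard point--compactness arguments: given an open upset $U$ and $x\in U$, every $y\notin U$ satisfies $x\nleq y$, so a clopen upset $U_y\ni x$ misses $y$; the complements cover the compact set $U^{c}$, and a finite subcover yields a clopen upset squeezed between $x$ and $U$. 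The downset versions follow by complementation, and the closed-set cases by the same compactness packaging. \cref{lemma:properties-Priestley-2.5} fits the same template: for $y\notin\upset F$ one has $f\nleq y$ for all $f\in F$, so compactness of $F$ produces a single clopen upset containing $F$ and missing $y$, exhibiting $\upset F=\bigcap\{U\in\clopup(X)\mid F\subseteq U\}$ as an intersection of clopens, hence closed; the case of $\downset F$ is dual.

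For \cref{lemma:properties-Priestley-3} I would use a Zorn's lemma argument powered by \cref{lemma:properties-Priestley-2.5}: fixing $x\in F$, any chain $C$ in $F\cap\downset x$ has the finite intersection property for the closed sets $F\cap\downset c$ with $c\in C$, so compactness gives a common lower bound in $F$; Zorn then yields a minimal $y\le x$, and symmetrically a maximal $z\ge x$. The ``consequently'' clause is immediate, since a closed upset equals the upset of its minimal elements. \cref{lemma:properties-Priestley-4} I would prove using Priestley duality itself (\cref{thm: PD} and \cref{rem: X and D}): the isomorphism $\varepsilon\colon X\to\functor{X}\functor{D}X$ identifies the points of $X$ with the prime filters of $\clopup(X)$ via $x\mapsto\{U\in\clopup(X)\mid x\in U\}$, so a prime filter $\mathcal P$ equals $\{U\in\clopup(X)\mid x\in U\}$ for a unique $x$; since for a clopen upset $U$ one has $x\in U$ iff $\upset x\subseteq U$, \cref{lemma:properties-Priestley-2} gives $\bigcap\mathcal P=\bigcap\{U\in\clopup(X)\mid \upset x\subseteq U\}=\upset x$, with uniqueness from the fact that $X$ is $T_0$.

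The heart of the lemma is \cref{lem:esakia-int-downsets}, where the Esakia hypothesis enters. The key observation is that ``$\downset U$ clopen for clopen $U$'' upgrades to ``$\downset U$ open for open $U$'', because an open set is a union of clopens and $\downset$ commutes with unions. Granting this, I would show the interior of a downset $D$ is a downset: $\downset\int D$ is open by the Esakia property and is contained in $\downset D=D$, hence contained in the largest open subset $\int D$, forcing $\downset\int D=\int D$. Passing to complements shows the closure of an upset is an upset. Finally, for arbitrary $F$, one inclusion is formal---$\upset\cl F$ is closed by \cref{lemma:properties-Priestley-2.5} and contains $\upset F$, so $\cl\upset F\subseteq\upset\cl F$---while the reverse uses the Esakia fact just proved: $\cl\upset F$ is a closed upset containing $F$, hence contains $\cl F$ and therefore $\upset\cl F$.

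I expect the main obstacle to be \cref{lem:esakia-int-downsets}: isolating the right reformulation of the Esakia axiom (that $\downset$ preserves \emph{open} sets, not merely clopen ones) is precisely what makes the ``largest open subset'' collapse go through, and everything else about the interior/closure commutation then follows formally. The only other delicate point is the uniqueness in \cref{lemma:properties-Priestley-4}, which I would rather extract from the duality isomorphism $\varepsilon$ than establish by a direct primeness argument, since showing by hand that $\bigcap\mathcal P$ is down-directed with a least element is comparatively fiddly.
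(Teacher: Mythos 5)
The paper never proves this lemma---it is quoted as standard background with a citation to the literature---so the only question is whether your reconstruction is sound, and it is. Your compactness-plus-separation arguments for (1)--(5) are exactly the standard ones (the Hausdorff-comparison trick for the subbasis claim, point-by-point separation plus finite subcovers for (2)--(4), and Zorn via the finite intersection property for (5)), and your treatment of (7) correctly isolates the one idea that matters: the Esakia axiom upgrades from clopen to open sets because $\downset$ commutes with unions and open sets in a Stone space are unions of clopens, after which the interior/closure manipulations are formal. Two small remarks on (6). First, uniqueness of $x$ follows from antisymmetry of the order ($\upset x = \upset x'$ gives $x \leq x'$ and $x' \leq x$), not from topological $T_0$-ness; this is surely what you meant, but the wording is off. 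Second, your appeal to the unit $\varepsilon$ of Priestley duality is legitimate \emph{inside this paper}, since \cref{thm: PD} and \cref{rem: X and D} precede the lemma; but in a from-scratch development it would be circular, because surjectivity of $\varepsilon$ (every prime filter of $\clopup(X)$ is $\varepsilon(x)$ for some $x$) is itself proved by exactly the kind of finite-intersection-property argument you were trying to avoid---applied to $\mathcal P \cup \{V^c \mid V \in \clopup(X) \setminus \mathcal P\}$---so the ``fiddly'' direct argument is deferred rather than eliminated.
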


Let $D\in\cat{DLat}$ and $X_D$ be its Priestley space. Then $D$ is a Heyting algebra iff $X_D$ is an Esakia space \cite{Esakia1974}, and $D$ is a complete Heyting algebra iff $X_D$ is an extremally order-disconnected Esakia space \cite[Thm.~2.4(2)]{BezhanishviliBezhanishvili2008}, where we recall that an Esakia space is \emph{extremally order-disconnected} if $\cl U$ is open for every open upset $U$. We thus arrive at the following well-known result. It was first proved in \cite[Thm.~2.3]{PultrSichler1988} without using Esakia duality. For the formulation below, see \cite[Thm.~3.7]{AvilaBezhanishviliMorandiZaldivar2020}.

\begin{theorem}
    Let $D$ be a bounded distributive lattice and $X_D$ its Priestley space. Then $D$ is a frame iff $X_D$ is an extremally order-disconnected Esakia space.
\end{theorem}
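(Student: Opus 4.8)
The plan is to factor the biconditional through the notion of a complete Heyting algebra. As recalled just above, a bounded distributive lattice is a frame exactly when it is a complete Heyting algebra, so it suffices to match the Heyting structure and the completeness of $D$ against properties of $X_D$ one at a time. Esakia duality already supplies half of this: $D$ is a Heyting algebra iff $X_D$ is an Esakia space. Thus the whole statement reduces to the claim that, for an Esakia space $X_D$, the lattice $D \cong \clopup(X_D)$ (see \cref{rem: X and D}) is complete iff $X_D$ is extremally order-disconnected. I would isolate this as the single lemma to prove and then read off the theorem by combining it with Esakia duality and the frame $=$ complete Heyting algebra equivalence.

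For the completeness lemma I would compute arbitrary joins directly in $\clopup(X_D)$. Given clopen upsets $\{U_i\}$, their union is an open upset, and any clopen upset lying above the family, being closed, must contain $\cl\bigcup_i U_i$. On an Esakia space $\cl\bigcup_i U_i$ is again an upset by \cref{lem:esakia-int-downsets}, so it is the least \emph{closed} upset above the family. The join in $\clopup(X_D)$ therefore exists and equals $\cl\bigcup_i U_i$ precisely when this set is open. Hence if $X_D$ is extremally order-disconnected, every such join exists and $\clopup(X_D)$ is complete.

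For the converse I would assume $\clopup(X_D)$ complete and show $\cl U$ is open for an arbitrary open upset $U$. Writing $U = \bigcup_i U_i$ with each $U_i$ a clopen upset (\cref{lemma:properties-Priestley-1}) and letting $V = \bigvee_i U_i$ be the join guaranteed by completeness, $V$ is the least clopen upset containing $U$, so $\cl U \subseteq V$. If some $x \in V$ lay outside $\cl U$, then since $\cl U$ is a closed upset it is an intersection of clopen upsets (\cref{lemma:properties-Priestley-2}), giving a clopen upset $C \supseteq \cl U$ with $x \notin C$; then $V \cap C$ is a clopen upset containing every $U_i$ yet omitting $x$, contradicting the minimality of $V$. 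Hence $V = \cl U$ is clopen, and in particular $\cl U$ is open, so $X_D$ is extremally order-disconnected.

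Once completeness is characterized, no separate work is needed for the join-infinite distributive law: a frame is automatically a Heyting algebra and a complete Heyting algebra automatically satisfies the join-infinite distributive law, which is exactly the frame $=$ complete Heyting algebra equivalence invoked at the start. The main obstacle I anticipate is the converse half of the completeness lemma --- turning the purely order-theoretic hypothesis that all joins exist in $\clopup(X_D)$ into the topological conclusion that closures of open upsets are open --- where it is essential that the Esakia condition is already available (so that $\cl U$ is an upset) and that closed upsets can be separated from outside points by clopen upsets. The remaining equivalences are quotable, so the argument really rests on this one join computation.
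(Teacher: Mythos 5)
Your proposal is correct and takes essentially the same route as the paper: factor the statement through ``frame $=$ complete Heyting algebra,'' use Esakia duality for the Heyting part, and characterize completeness of $\clopup(X_D)$ by extremal order-disconnectedness. The only difference is that the paper disposes of both equivalences by citation (Esakia's theorem and the Bezhanishvili--Bezhanishvili characterization of complete Heyting algebras), whereas you prove the completeness equivalence from scratch via the join computation $\bigvee_i U_i = \cl\bigcup_i U_i$, and that argument -- including the use of the Esakia condition to ensure $\cl U$ is an upset and the separation of $\cl U$ from outside points by a clopen upset in the converse -- is correct.
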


The following well-known fact (see, e.g., \cite[Lem.~2.3]{BezhanishviliBezhanishvili2008}) will be used throughout. 

\begin{lemma} \label{lemma:joins-in-priestley}
    For a frame $L$, its Priestley space $X_L$, and $S \subseteq L$, we have \begin{equation*}
        \varphi \left( \bigvee S \right) = \cl \left(\bigcup \{\varphi(s) \mid s \in S\}\right).
    \end{equation*}
\end{lemma}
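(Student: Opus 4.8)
The plan is to prove the two inclusions separately. The inclusion $\cl\bigl(\bigcup_{s\in S}\varphi(s)\bigr) \subseteq \varphi\bigl(\bigvee S\bigr)$ is the routine one. Since $s \le \bigvee S$ for every $s \in S$ and $\varphi$ is order-preserving (being the bounded lattice isomorphism $L \to \clopup(X_L)$ of Priestley duality), each $\varphi(s)$ is contained in $\varphi\bigl(\bigvee S\bigr)$, whence $\bigcup_{s\in S}\varphi(s) \subseteq \varphi\bigl(\bigvee S\bigr)$. As $\varphi\bigl(\bigvee S\bigr)$ is a clopen upset, it is in particular closed, so taking closures preserves this inclusion.

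For the reverse inclusion $\varphi\bigl(\bigvee S\bigr) \subseteq \cl\bigl(\bigcup_{s\in S}\varphi(s)\bigr)$ I would argue by contraposition. Suppose a point $x \in X_L$ (that is, a prime filter of $L$) does not lie in $\cl\bigl(\bigcup_{s\in S}\varphi(s)\bigr)$. Using that $\{\varphi(a) \mid a \in L\} \cup \{\varphi(a)^c \mid a \in L\}$ is a subbasis for $X_L$ (\cref{lemma:properties-Priestley-0}) together with $\varphi(a_1) \cap \dots \cap \varphi(a_m) = \varphi(a_1 \wedge \dots \wedge a_m)$, I can find a basic open neighbourhood of $x$ of the form
\[
    B = \varphi(a) \cap \varphi(b_1)^c \cap \dots \cap \varphi(b_n)^c
\]
that is disjoint from $\bigcup_{s\in S}\varphi(s)$. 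Thus $a \in x$, each $b_i \notin x$, and for every $s \in S$ we have $\varphi(a) \cap \varphi(s) \subseteq \varphi(b_1) \cup \dots \cup \varphi(b_n)$.

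The crux is then to transfer this topological information back into the lattice. Setting $b = b_1 \vee \dots \vee b_n$ and using that $\varphi$ is an isomorphism onto $\clopup(X_L)$ (so $\varphi$ both preserves and reflects order), the containment $\varphi(a \wedge s) \subseteq \varphi(b)$ yields $a \wedge s \le b$ for all $s \in S$. Here the defining property of a frame enters: by the join-infinite distributive law,
\[
    a \wedge \bigvee S = \bigvee\{a \wedge s \mid s \in S\} \le b.
\]
Now suppose toward a contradiction that $\bigvee S \in x$. Since $a \in x$ and $x$ is a filter, $a \wedge \bigvee S \in x$, and upward closure gives $b \in x$. But $x$ is prime, so $b_i \in x$ for some $i$, contradicting $b_i \notin x$. (When $n = 0$ we have $b = 0$, and $b \in x$ already contradicts properness of $x$.) Hence $\bigvee S \notin x$, i.e.\ $x \notin \varphi\bigl(\bigvee S\bigr)$, completing the contrapositive.

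I expect the main obstacle to be organizing the passage from the topological statement $B \cap \varphi(s) = \varnothing$ to the single lattice inequality $a \wedge s \le b$: one must collapse the finitely many complemented factors into the single element $b$ via $\varphi(b_1) \cup \dots \cup \varphi(b_n) = \varphi(b)$ and then invoke that $\varphi$ is an order-embedding. Everything else is a direct application of the join-infinite distributive law together with the primeness of $x$.
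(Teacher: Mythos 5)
Your proof is correct, but note that the paper never proves this lemma at all: it is imported from the literature (\cite[Lem.~2.3]{BezhanishviliBezhanishvili2008}), so you have in effect supplied the self-contained argument that the paper omits. Your route also differs from the standard one behind that citation. The usual argument is structural: $\bigcup\{\varphi(s) \mid s \in S\}$ is an open upset, and since $X_L$ is an extremally order-disconnected Esakia space its closure is a clopen upset, hence equals $\varphi(c)$ for some $c \in L$; one then checks that $c$ is the least upper bound of $S$ (each $\varphi(s) \subseteq \varphi(c)$ gives $s \leq c$, and if $s \leq d$ for all $s \in S$, then $\varphi(c) = \cl\bigcup\{\varphi(s) \mid s \in S\} \subseteq \varphi(d)$ because $\varphi(d)$ is closed, so $c \leq d$). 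Your proof instead works from first principles with prime filters: contraposition, a basic open neighbourhood $\varphi(a) \cap \varphi(b_1)^c \cap \dots \cap \varphi(b_n)^c$, collapsing the resulting inequalities $a \wedge s \leq b$ into $a \wedge \bigvee S \leq b$ via the join-infinite distributive law, and finishing with primeness of $x$ (including the correct treatment of the degenerate case $n = 0$ via properness). What each approach buys: yours requires only plain Priestley duality for distributive lattices and pinpoints exactly where the frame law enters --- the equality genuinely fails for complete distributive lattices that are not frames, e.g.\ the coframe of closed subsets of $\mathbb{R}$ --- whereas the structural proof is shorter but presupposes the theorem that Priestley duals of frames are extremally order-disconnected Esakia spaces, which the paper only states later and without proof as well.
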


Frame homomorphisms are dually characterized by Priestley morphisms that satisfy the following additional condition.

\begin{lemma} [{\cite[Sec.~2.5]{PultrSichler1988}}]
    Let $L,M\in\Frm$, $h: L \to M$ be a bounded lattice homomorphism, and $f=\functor X(h)$.
    Then $h$ is a frame homomorphism iff $f^{-1}\cl U = \cl f^{-1}(U)$ for all open upsets $U$ of $X_L$.
\end{lemma}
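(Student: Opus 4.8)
The plan is to reduce the statement to two facts already at our disposal: the join formula of \cref{lemma:joins-in-priestley} and the duality relation between $h$ and $f$. Write $\varphi_L : L \to \clopup(X_L)$ and $\varphi_M : M \to \clopup(X_M)$ for the Priestley isomorphisms. Since $h$ is a bounded lattice homomorphism and $f = \functor X(h) = h^{-1}$, the naturality of $\varphi$ (\cref{rem: X and D}) gives the key identity $f^{-1}(\varphi_L(a)) = \varphi_M(h(a))$ for every $a \in L$. I would also record that the clopen upsets of $X_L$ are exactly the sets $\varphi_L(a)$, so that by \cref{lemma:properties-Priestley-1} every open upset $U$ of $X_L$ is of the form $U_S := \bigcup\{\varphi_L(s) \mid s \in S\}$ for a suitable $S \subseteq L$ (take $S = \{a \in L \mid \varphi_L(a) \subseteq U\}$).

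The heart of the argument is a single computation applied to $U_S$. Using \cref{lemma:joins-in-priestley} in $X_L$, we have $\cl U_S = \varphi_L(\bigvee S)$, and hence $f^{-1}(\cl U_S) = \varphi_M(h(\bigvee S))$ by the key identity. On the other side, since preimages commute with unions, $f^{-1}(U_S) = \bigcup\{\varphi_M(h(s)) \mid s \in S\}$, and applying \cref{lemma:joins-in-priestley} in $X_M$ gives $\cl(f^{-1}(U_S)) = \varphi_M(\bigvee\{h(s) \mid s \in S\})$. Thus the condition $f^{-1}(\cl U_S) = \cl(f^{-1}(U_S))$ is, via the injectivity of $\varphi_M$, equivalent to the equality $h(\bigvee S) = \bigvee\{h(s) \mid s \in S\}$.

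With this equivalence in hand both directions are immediate. For the forward direction, if $h$ is a frame homomorphism then $h(\bigvee S) = \bigvee\{h(s) \mid s \in S\}$ for all $S \subseteq L$, so the computation above yields $f^{-1}(\cl U_S) = \cl(f^{-1}(U_S))$; since every open upset of $X_L$ is some $U_S$, the stated identity holds for all open upsets. For the converse, assuming the identity for all open upsets, I apply it to $U_S$ for an arbitrary $S \subseteq L$ and read off $h(\bigvee S) = \bigvee\{h(s) \mid s \in S\}$; as $h$ already preserves finite meets and the top element, this shows that $h$ preserves arbitrary joins and is therefore a frame homomorphism.

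I do not anticipate a serious obstacle, as the statement is essentially a dictionary translation between the frame-theoretic and the Priestley sides. The one point that needs care is the reduction of an arbitrary open upset to the form $U_S$: this relies on the clopen upsets forming a basis for the open upsets (\cref{lemma:properties-Priestley-1}) together with the identification of the clopen upsets of $X_L$ with the image of $\varphi_L$. I would also make sure to invoke \cref{lemma:joins-in-priestley} in both $X_L$ and $X_M$ and to note that $\cl U_S = \varphi_L(\bigvee S)$ is itself a clopen upset, so that forming $f^{-1}(\cl U_S)$ is unproblematic.
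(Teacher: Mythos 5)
Your proof is correct. The paper does not prove this lemma itself---it is quoted from Pultr-Sichler \cite[Sec.~2.5]{PultrSichler1988}---so there is no in-paper argument to compare against; your computation (writing an arbitrary open upset as $U_S=\bigcup\{\varphi_L(s)\mid s\in S\}$, then translating both $f^{-1}(\cl U_S)$ and $\cl f^{-1}(U_S)$ into $\varphi_M$-images via \cref{lemma:joins-in-priestley} and the naturality identity $f^{-1}(\varphi_L(a))=\varphi_M(h(a))$, so that the closure condition becomes $h(\bigvee S)=\bigvee h[S]$) is precisely the dictionary argument the cited source gives, and it relies only on facts already available in the paper.
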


Since frames are also known as locales (see, e.g., \cite{PicadoPultr2012}), we introduce the following terminology.

\begin{definition}[L-spaces]  \label{def: L}
\begin{enumerate}[ref=\thedefinition(\arabic*)]
\item[] 
\item A {\em localic space} or simply an \emph{L-space} is an extremally order-disconnected Esakia space. 
\item An \emph{L-morphism} is a Priestley morphism $f : X \to Y$ between L-spaces such that $f^{-1}\cl U = \cl f^{-1}(U)$ for all open upsets $U$ of $Y$. \label[definition]{def:l-morph}
\item Let {\LPries} be the category of L-spaces and L-morphisms.
\end{enumerate}
\end{definition}

\begin{theorem}[Pultr-Sichler {\cite[Cor.~2.5]{PultrSichler1988}}]  \label{thm: PS}
    \Frm is dually equivalent to \LPries.
\end{theorem}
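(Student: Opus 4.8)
The plan is to derive the dual equivalence $\Frm \simeq \LPries^{\mathrm{op}}$ by restricting the Priestley duality functors $\functor{X}$ and $\functor{D}$ from \cref{thm: PD} to the relevant subcategories and verifying that this restriction is well defined on both objects and morphisms. Concretely, I would assemble three previously established facts: that a bounded distributive lattice $D$ is a frame iff $X_D$ is an L-space (the extremal order-disconnectedness theorem), that a bounded lattice homomorphism $h$ between frames is a frame homomorphism iff its Priestley dual $f = \functor{X}(h)$ is an L-morphism (the Pultr--Sichler morphism criterion), and that $\functor{X},\functor{D}$ already form a dual equivalence on the ambient categories.

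First I would show that $\functor{X}$ restricts to a functor $\Frm \to \LPries$. On objects this is immediate from the object-level characterization: if $L$ is a frame then $X_L$ is an extremally order-disconnected Esakia space, i.e.\ an L-space. On morphisms, given a frame homomorphism $h : L \to M$, its Priestley dual $\functor{X}(h) = h^{-1} : X_M \to X_L$ is a Priestley morphism, and the morphism criterion guarantees it satisfies $f^{-1}\cl U = \cl f^{-1}(U)$ for all open upsets $U$, hence is an L-morphism; the converse direction of that same criterion will be needed for fullness. Dually, I would show $\functor{D}$ restricts to a functor $\LPries \to \Frm$: for an L-space $X$, the lattice $\clopup(X)$ is a frame by the object-level characterization (read in the $\functor{D}$ direction, using $X \cong \functor{X}\functor{D}X$), and for an L-morphism $f : X \to Y$ the dual $f^{-1} : \clopup(Y) \to \clopup(X)$ is a frame homomorphism, again by the morphism criterion applied in reverse.

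Once both restrictions are established as functors, the dual equivalence follows almost formally. The natural isomorphisms $\varphi : D \to \functor{D}\functor{X}D$ and $\varepsilon : X \to \functor{X}\functor{D}X$ from \cref{rem: X and D} are already components of the ambient Priestley duality; restricting them to frames and L-spaces respectively still yields natural isomorphisms, since $\varphi_L$ is then an isomorphism of frames (a bijective bounded lattice homomorphism between frames automatically preserves arbitrary joins) and $\varepsilon_X$ is an isomorphism of L-spaces. Naturality is inherited from the ambient setting. Thus $(\functor{X}, \functor{D})$ witnesses the dual equivalence of \Frm and \LPries.

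The main obstacle is establishing that the morphism correspondence is tight in both directions, that is, that the condition $f^{-1}\cl U = \cl f^{-1}(U)$ is exactly what distinguishes frame homomorphisms among bounded lattice homomorphisms of frames. One half --- that every frame homomorphism yields an L-morphism --- is packaged by the cited Pultr--Sichler lemma, but faithfulness and fullness of the restricted functors require the full biconditional: I must confirm that an arbitrary L-morphism $f : X_M \to X_L$ corresponds under $\functor{D}$ to a bounded lattice homomorphism that genuinely preserves arbitrary joins, using \cref{lemma:joins-in-priestley} to translate joins in $L$ into closures of unions of the sets $\varphi(s)$ and the L-morphism condition to commute $f^{-1}$ with those closures. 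Since \Frm and \LPries share the same objects-up-to-duality and the same underlying Priestley morphisms subject to one extra equation, the hom-sets match up bijectively, giving a full and faithful functor that is essentially surjective, which is precisely a dual equivalence.
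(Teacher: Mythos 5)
Your proposal is correct and follows essentially the same route as the paper, which treats \cref{thm: PS} as the restriction of Priestley duality obtained by combining the object-level characterization (a bounded distributive lattice is a frame iff its Priestley space is an extremally order-disconnected Esakia space) with the Pultr--Sichler morphism criterion, with the units $\varphi$ and $\varepsilon$ restricting as noted in \cref{rem: units}. The paper itself cites \cite[Cor.~2.5]{PultrSichler1988} rather than spelling out these steps, but your assembly of the biconditional morphism lemma, \cref{lemma:joins-in-priestley}, and the restricted natural isomorphisms is exactly the intended argument.
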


\begin{remark} \label{rem: units}
The functors establishing Pultr-Sichler duality are the restrictions of the functors $\functor X : \cat{DLat} \to \cat{Pries}$ and $\functor D : \cat{Pries} \to \cat{DLat}$ establishing Priestley duality, and the units of this duality are the restrictions of the units $\varphi$ and $\varepsilon$ of Priestley duality (see \cref{rem: X and D}).
\end{remark}

\section{Priestley duality for spatial frames} \label{sec: 3.5}

Let $L$ be a frame and $X_L$ the Priestley space of $L$. Since completely prime filters are prime filters, $pt(L)$ is a subset of $X_L$, which from now on will be denoted by $Y_L$. In \cite{PultrSichler2000} elements of $Y_L$ are called \emph{L-points} and in \cite{AvilaBezhanishviliMorandiZaldivar2020} they are called \emph{nuclear points}. We follow the terminology of \cite{Schwartz2013} 
and call them \emph{localic points}. In addition, we refer to $Y_L$ as the {\em localic part} of $X_L$.
The next lemma shows that open subsets of $Y_L$ are exactly the intersections of clopen upsets of $X_L$ with $Y_L$.

\begin{lemma}[{\cite[Lem.~5.3(1)]{AvilaBezhanishviliMorandiZaldivar2020}}] \label{lemma:zeta-is-phi-restricted}
    Let $L$ be a frame, $X_L$ its Priestley space, and $Y_L \subseteq X_L$ the 
        localic part of $X_L$. Then $\zeta(a) = \varphi(a) \cap Y_L$ for each $a\in L$.
\end{lemma}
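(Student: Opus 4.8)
The plan is to unpack both definitions and show the two sets coincide by mutual containment, using the definitions of $\zeta$, $\varphi$, and the localic part $Y_L=pt(L)$. Recall $\zeta(a)=\{x\in pt(L)\mid a\in x\}$ and $\varphi(a)=\{x\in X_L\mid a\in x\}$, where $pt(L)$ is the set of completely prime filters of $L$ while $X_L$ is the set of all prime filters. So $\varphi(a)\cap Y_L=\{x\in X_L\mid a\in x\}\cap pt(L)=\{x\in pt(L)\mid a\in x\}$, which is literally $\zeta(a)$. At the level of raw sets this is essentially a tautology once one observes that $Y_L=pt(L)\subseteq X_L$ and that membership of $a$ in a filter is the defining condition for both maps.

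The point that genuinely requires argument is not the set-theoretic equality but the compatibility of the two \emph{topologies}: $\zeta(a)$ is declared to be an open set of $pt(L)$ (indeed the open sets of $pt(L)$ are exactly the $\zeta(a)$), whereas $\varphi(a)\cap Y_L$ carries the subspace topology inherited from the Priestley topology on $X_L$. So the substantive content of the lemma, as I read it, is that the map $a\mapsto\varphi(a)\cap Y_L$ is well-defined as the Scott-style topology on $pt(L)$ and that these subspace sets are precisely the opens. First I would confirm the set equality $\zeta(a)=\varphi(a)\cap Y_L$ as above; this is the easy half. Then I would verify that the clopen upsets of $X_L$ are exactly the $\varphi(a)$ for $a\in L$, which follows from Priestley duality: $\functor D(X_L)=\clopup(X_L)\cong L$ via $\varphi$, so every clopen upset has the form $\varphi(a)$. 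Consequently intersecting clopen upsets of $X_L$ with $Y_L$ yields exactly the sets $\zeta(a)$.

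The main obstacle I anticipate is bookkeeping about which filters lie in $Y_L$ rather than any deep step: I must be careful that the condition ``$a\in x$'' means the same thing for a completely prime filter $x\in Y_L$ whether $x$ is viewed inside $X_L$ or inside $pt(L)$. Since $Y_L$ is by definition the subset $pt(L)$ of $X_L$ and a completely prime filter is in particular a prime filter, the containment relation $a\in x$ is unambiguous, so this reduces to careful unraveling of notation. I would therefore state the proof as a short chain of equalities
\[
    \varphi(a)\cap Y_L=\{x\in X_L\mid a\in x\}\cap pt(L)=\{x\in pt(L)\mid a\in x\}=\zeta(a),
\]
with a sentence justifying each equality by the respective definitions of $\varphi$, $Y_L$, and $\zeta$. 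If the intended reading also requires matching the topologies (open sets of $pt(L)$ versus subspace topology from $X_L$), I would add a closing remark invoking \cref{lemma:joins-in-priestley} and the identification $\clopup(X_L)\cong L$ to see that the subspace topology on $Y_L$ is generated by the sets $\varphi(a)\cap Y_L=\zeta(a)$, i.e.\ agrees with the topology of $pt(L)$.
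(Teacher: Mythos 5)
Your proof is correct. The paper does not prove this lemma itself---it imports it by citing \cite[Lem.~5.3(1)]{AvilaBezhanishviliMorandiZaldivar2020}---but within this paper's definitions ($Y_L$ is literally $pt(L)$ viewed as a subset of $X_L$, and both $\zeta(a)$ and $\varphi(a)$ are defined by the membership condition $a \in x$), your chain of equalities is exactly the right argument, and your closing remark correctly accounts for the topological reading (that the opens of $Y_L$ are precisely the traces $\varphi(a) \cap Y_L$ of clopen upsets, via the isomorphism $L \cong \clopup(X_L)$).
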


The following characterization of $Y_L$ was given in \cite[Prop.~2.9]{PultrSichler2000}; see also \cite[Lem.~5.1]{BezhanishviliGabelaiaJibladze2016}.

\begin{lemma} \label{lemma:downset-y-clopen}
Let $L$ be a frame, $X_L$ its Priestley space, $Y_L \subseteq X_L$ the localic part of $X_L$, and $x\in X_L$. Then $x \in Y_L$ iff $\downset x$ is clopen.
\end{lemma}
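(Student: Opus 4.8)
The plan is to reduce the statement to a characterization of complete primeness of the prime filter $x$ in terms of the order of $L$, and then to translate that condition back into topology on $X_L$ via the Stone map $\varphi$. First I would record the two facts that do the heavy lifting. On the lattice side, a prime filter $x$ is completely prime precisely when its complement $L\setminus x$ is closed under arbitrary joins; in that case $p:=\bigvee(L\setminus x)$ is the largest element of $L$ not belonging to $x$, so $L\setminus x=\downset p$ (the principal ideal in $L$). On the topological side, Priestley duality identifies the clopen upsets of $X_L$ with $\{\varphi(a)\mid a\in L\}$, so the clopen downsets are exactly the sets $\varphi(a)^c$. Since $\{x\}$ is closed and $X_L$ is a Priestley space, \cref{lemma:properties-Priestley-2.5} shows $\downset x$ is always closed, so the content of the lemma is that $\downset x$ is \emph{open} iff $x$ is completely prime.

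For the forward implication I would assume $x\in Y_L$, i.e.\ $x$ is completely prime, and let $p=\bigvee(L\setminus x)$ be the top element of $L\setminus x$. The key computation is that for every prime filter $y$ one has $y\subseteq x$ iff $p\notin y$: if $p\notin y$, then no element of $y$ lies below $p$ (otherwise $p\in y$, as $y$ is an upset), so $y\cap(L\setminus x)=y\cap\downset p=\varnothing$, i.e.\ $y\subseteq x$; conversely $p\le p$ gives $p\notin x$, so $y\subseteq x$ forces $p\notin y$. This identifies $\downset x=\{y\in X_L\mid p\notin y\}=\varphi(p)^c$, which is clopen.

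For the converse I would assume $\downset x$ is clopen. Being a clopen downset, $\downset x=\varphi(a)^c$ for some $a\in L$. Evaluating the equality at $y=x$ (and using $x\subseteq x$) yields $a\notin x$. I then claim $a$ is the largest element of $L\setminus x$: if $b\notin x$ but $b\not\le a$, then $\varphi(b)\not\subseteq\varphi(a)$, so the Priestley separation axiom supplies a prime filter $y$ with $b\in y$ and $a\notin y$; the latter gives $y\in\varphi(a)^c=\downset x$, whence $b\in y\subseteq x$, contradicting $b\notin x$. Thus $L\setminus x=\downset a$ is closed under arbitrary joins, and complete primeness follows: if $\bigvee S\in x$ but no $s\in S$ lies in $x$, then each $s\le a$, so $\bigvee S\le a$, forcing $a\in x$, a contradiction.

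The step I expect to be the main obstacle is the converse direction, specifically the verification that $a=\max(L\setminus x)$: this is where the argument genuinely uses the prime filter existence built into the Priestley separation axiom, rather than mere order-theoretic bookkeeping. Everything else — the identification of clopen downsets with the sets $\varphi(a)^c$ and the routine upset/downset manipulations — is a direct consequence of Priestley duality and the definitions already recalled.
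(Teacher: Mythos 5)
Your proof is correct. Note that the paper itself gives no proof of this lemma---it is quoted from Pultr--Sichler [Prop.~2.9] and Bezhanishvili--Gabelaia--Jibladze [Lem.~5.1]---and your argument is essentially the standard one from those sources: a prime filter $x$ is completely prime iff $L\setminus x$ is closed under arbitrary joins, iff $L\setminus x$ has a largest element $p$, in which case $\downset x=\varphi(p)^c$; conversely, if $\downset x$ is clopen it equals $\varphi(a)^c$ with $a=\max(L\setminus x)$, whence complete primeness. One small attribution slip: the existence of a prime filter $y$ with $b\in y$ and $a\notin y$ when $b\not\le a$ is not the Priestley separation axiom (which separates points of $X_L$), but rather the fact that $\varphi$ is an order-isomorphism onto $\clopup(X_L)$ (equivalently, the prime filter theorem that Priestley duality packages); the substance of that step is unaffected.
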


This motivates the following definition.

\begin{definition} \label{def: localic part}
    Let $X$ be an L-space. We call 
    \[Y := \{y \in X \mid \downset y \text{ is clopen}\}\]
    the \emph{localic part of $X$}. We view $Y$ as a topological space, where $U \subseteq Y$ is open iff $U = V \cap Y$ for some $V \in \clopup(X)$.
\end{definition}

\begin{definition}
    Let $X$ be an $L$-space and $Y$ the localic part of $X$.
We call a closed upset $F$ of $X$ a \emph{Scott upset} if $\min(F) \subseteq Y$.
\end{definition}

Scott upsets were introduced in \cite{BezhanishviliMelzer2022} where it was shown that they provide a characterization of Scott-open filters of a frame in the language of Priestley spaces. The next lemma provides a characterization of Scott upsets.

\begin{lemma}[{\cite[Lem.~5.1]{BezhanishviliMelzer2022}}] \label{lem: Scott upset}
    Let $X$ be an L-space and $F$ a closed upset of $X$. Then
    $F$ is a Scott upset iff for every open upset $U$ of $X$, from $F \subseteq \cl U$ it follows that $F \subseteq U$. 
\end{lemma}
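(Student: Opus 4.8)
The plan is to prove both implications directly, with the reverse direction handled by contraposition. Throughout I write $Y$ for the localic part of $X$, and I recall that, since $F$ is a closed upset, \cref{lemma:properties-Priestley-3} gives $F = \upset \min(F)$; this lets me reduce claims about $F$ to claims about its minimal elements.

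For the forward implication, suppose $\min(F) \subseteq Y$ and let $U$ be an open upset with $F \subseteq \cl U$. Because $U$ is an upset and $F = \upset \min(F)$, it suffices to show $\min(F) \subseteq U$. Fix $y \in \min(F)$. Since $y \in Y$, \cref{lemma:downset-y-clopen} tells me $\downset y$ is clopen, so it is an open neighborhood of $y$. As $y \in F \subseteq \cl U$, this neighborhood must meet $U$, yielding some $z \leq y$ with $z \in U$; since $U$ is an upset, $y \in U$. This proves $\min(F) \subseteq U$ and hence $F \subseteq U$.

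For the converse I argue contrapositively: assuming $F$ is not a Scott upset, I produce an open upset $U$ with $F \subseteq \cl U$ but $F \not\subseteq U$. Pick $y \in \min(F)$ with $y \notin Y$. Since $\{y\}$ is closed in the Stone space $X$, \cref{lemma:properties-Priestley-2.5} makes $\downset y$ a closed downset, so $U := X \setminus \downset y$ is an open upset, and $y \in \downset y$ forces $y \notin U$, giving $F \not\subseteq U$. It remains to check $F \subseteq \cl U = X \setminus \int(\downset y)$, that is, that $F$ is disjoint from $\int(\downset y)$. Any $x \in F \cap \downset y$ satisfies $x \leq y$ and $x \in F$, so minimality of $y$ forces $x = y$; thus $F \cap \int(\downset y) \subseteq F \cap \downset y = \{y\}$.

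The crux is to rule out $y \in \int(\downset y)$, and this is exactly where the extremally order-disconnected Esakia structure of $X$ enters. By \cref{lem:esakia-int-downsets} the interior of a downset is again a downset, so if $y$ lay in $\int(\downset y)$, then the downset $\int(\downset y)$ would contain all of $\downset y$, forcing $\int(\downset y) = \downset y$ and making $\downset y$ clopen, which contradicts $y \notin Y$ via \cref{lemma:downset-y-clopen}. Hence $y \notin \int(\downset y)$, so $F \cap \int(\downset y) = \varnothing$ and $F \subseteq \cl U$, completing the contrapositive. I expect this Esakia step to be the only non-routine point: the naive witness $U = X \setminus \downset y$ succeeds precisely because the order structure prevents $y$ from lying in the interior of its own downset.
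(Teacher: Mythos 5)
Your proof is correct, but note that this paper never proves the lemma itself: it is imported verbatim from \cite[Lem.~5.1]{BezhanishviliMelzer2022}, so there is no in-paper argument to compare yours against; what you have produced is a self-contained replacement for the external citation. Checking it on its own terms: the forward direction is right --- for a closed upset $F=\upset\min(F)$ (\cref{lemma:properties-Priestley-3}), and for $y\in\min(F)\subseteq Y$ the set $\downset y$ is an open neighborhood of $y$, so $y\in\cl U$ forces $\downset y\cap U\neq\varnothing$ and hence $y\in U$ since $U$ is an upset. The converse via the witness $U=X\setminus\downset y$ for a minimal $y\notin Y$ is also sound: $\downset y$ is closed by \cref{lemma:properties-Priestley-2.5}, minimality gives $F\cap\downset y=\{y\}$, and \cref{lem:esakia-int-downsets} makes $\int(\downset y)$ a downset, so $y\in\int(\downset y)$ would force $\int(\downset y)=\downset y$ and thus $\downset y$ clopen, contradicting $y\notin Y$ (\cref{lemma:downset-y-clopen}); hence $F\cap\int(\downset y)=\varnothing$, i.e., $F\subseteq\cl U$ while $F\not\subseteq U$. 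One small correction to your commentary: the step you flag as using ``the extremally order-disconnected Esakia structure'' in fact uses only the Esakia property (that is all \cref{lem:esakia-int-downsets} requires); extremal order-disconnectedness plays no role anywhere in your argument, so your proof actually establishes the lemma for arbitrary Esakia spaces, not just L-spaces.
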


\begin{remark}
In \cite{PultrSichler2000} closed sets satisfying the property in Lemma~\ref{lem: Scott upset} are called \emph{L-compact sets}. Thus, Scott upsets are exactly the upsets of L-compact sets.
\end{remark}

Spatial frames are characterized by the following theorem. The equivalence (1)$\Leftrightarrow$(2) is proved in \cite[Thm.~5.5]{AvilaBezhanishviliMorandiZaldivar2020} and the equivalence (1)$\Leftrightarrow$(3) in \cite[Sec.~2.11]{PultrSichler2000}.

\begin{theorem} \label{thm: spatial}
    Let $L$ be a frame, $X_L$ its Priestley space, and $Y_L \subseteq X_L$ the localic part of $X_L$. The following are equivalent. 
    \begin{enumerate}[ref=\thetheorem(\arabic*)]
        \item $L$ is spatial.
        \item $Y_L$ is dense in $X_L$. \label[theorem]{Y-dense-in-X}
        \item For clopen upsets $U,V$ of $X_L$, from $U \not \subseteq V$ it follows that there is a Scott upset $F$ of $X_L$ such that $F \subseteq U$ but $F \not \subseteq V$.
    \end{enumerate}
\end{theorem}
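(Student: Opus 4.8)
The plan is to prove the three equivalences by establishing the cycle $(1)\Rightarrow(2)\Rightarrow(3)\Rightarrow(1)$, exploiting the dictionary already set up: the map $\zeta(a)=\varphi(a)\cap Y_L$ of \cref{lemma:zeta-is-phi-restricted}, the characterization of $Y_L$ via clopen downsets in \cref{lemma:downset-y-clopen}, and the characterization of Scott upsets in \cref{lem: Scott upset}. The recurring strategy is to translate the order-theoretic condition ``$a\not\le b$'' into the Priestley language ``$\varphi(a)\not\subseteq\varphi(b)$,'' and to translate ``completely prime filter containing $a$ but not $b$'' into ``localic point in $\varphi(a)\setminus\varphi(b)$.''

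For $(1)\Rightarrow(2)$, I would argue contrapositively: if $Y_L$ is not dense, then there is a nonempty basic open set of $X_L$ disjoint from $Y_L$. Using \cref{lemma:properties-Priestley-0} I can take this to be of the form $U\cap V^c$ with $U,V\in\clopup(X_L)$, nonempty and missing $Y_L$; this yields $U\not\subseteq V$ while $\varphi(a)\setminus\varphi(b)$ (with $a,b$ the elements dual to $U,V$) meets no localic point. Via \cref{lemma:zeta-is-phi-restricted} this says $\zeta(a)\not\subseteq\zeta(b)$ fails to be witnessed by any point, i.e. no completely prime filter separates $a$ from $b$, so $L$ is not spatial. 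Conversely, for $(2)\Rightarrow(3)$, given $U\not\subseteq V$, density of $Y_L$ produces a localic point $y\in U\setminus V$; then $\upset y$ is a closed upset whose unique minimal element $y$ lies in $Y_L$, hence a Scott upset, and $\upset y\subseteq U$ (as $U$ is an upset containing $y$) while $y\notin V$ gives $\upset y\not\subseteq V$.

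The step I expect to be the genuine obstacle is $(3)\Rightarrow(1)$, since here I must manufacture an actual completely prime filter from the combinatorial hypothesis about Scott upsets. The plan is: assume $a\not\le b$, so $\varphi(a)\not\subseteq\varphi(b)$, and obtain a Scott upset $F$ with $F\subseteq\varphi(a)$ and $F\not\subseteq\varphi(b)$. By \cref{lemma:properties-Priestley-3}, $F=\upset\min(F)$ and there is $y\in\min(F)\subseteq Y_L$ with $y\notin\varphi(b)$; since $y$ is a localic point, it is a completely prime filter of $L$. Then $y\in F\subseteq\varphi(a)$ gives $a\in y$ while $y\notin\varphi(b)$ gives $b\notin y$, exactly separating $a$ from $b$. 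The delicate point is verifying that one can locate a minimal element of $F$ lying outside $\varphi(b)$: since $\varphi(b)$ is a clopen (hence closed) upset, $F\setminus\varphi(b)$ is a nonempty closed subset of $F$, and applying \cref{lemma:properties-Priestley-3} to a point of $F\setminus\varphi(b)$ yields a minimal element of $F$ below it that still avoids the upset $\varphi(b)$. This minimal element lies in $Y_L$ because $F$ is a Scott upset, which is precisely what makes it a completely prime filter rather than merely a prime one, closing the argument.
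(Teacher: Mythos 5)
Your proof is correct: all three implications hold as you argue them. In (1)$\Rightarrow$(2) the passage from a nonempty open set missing $Y_L$ to a nonempty set of the form $U\cap V^c$ with $U,V\in\clopup(X_L)$ is justified by \cref{lemma:properties-Priestley-0}, and $U\cap V^c\cap Y_L=\varnothing$ translates exactly to ``no completely prime filter contains $a$ and omits $b$'' while $U\not\subseteq V$ gives $a\not\leq b$. In (2)$\Rightarrow$(3) the set $\upset y$ is indeed a Scott upset ($\{y\}$ is closed, so $\upset y$ is a closed upset with $\min(\upset y)=\{y\}\subseteq Y_L$). In (3)$\Rightarrow$(1) your delicate point is handled correctly: taking $x\in F\setminus\varphi(b)$ and $y\in\min(F)$ with $y\leq x$ from \cref{lemma:properties-Priestley-3}, the point $y$ avoids $\varphi(b)$ because $\varphi(b)$ is an upset, and $y\in Y_L$ because $F$ is a Scott upset, so $y$ is the required completely prime filter. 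It is worth noting that the paper does not prove this theorem at all: it cites the equivalence (1)$\Leftrightarrow$(2) to an earlier paper of \'Avila--Bezhanishvili--Morandi--Zald\'ivar and (1)$\Leftrightarrow$(3) to Pultr--Sichler, where the latter is obtained via the frame-theoretic fact that spatiality is equivalent to separation by Scott-open filters (Scott-open filters being exactly intersections of completely prime filters). Your cyclic argument (1)$\Rightarrow$(2)$\Rightarrow$(3)$\Rightarrow$(1) is self-contained, stays entirely in the Priestley-space language, and needs only the elementary structure lemmas plus the definition of Scott upset (you never even need \cref{lem: Scott upset}); in particular, your step (3)$\Rightarrow$(1), which extracts a localic point as a minimal point of a Scott upset avoiding $\varphi(b)$, is in effect a pointwise, Priestley-side re-proof of the cited filter-theoretic fact. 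What the citations buy the paper is brevity; what your argument buys is a direct proof from first principles within the paper's own framework.
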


\begin{remark} \label{remark:spatial}
    By \cref{Y-dense-in-X}, if $U$ is clopen in $X_L$, then $\cl(U \cap Y_L) = U$. In particular, for $a \in L$, by \cref{lemma:zeta-is-phi-restricted} we have $\cl \zeta(a) = \cl(\varphi(a) \cap Y_L) = \varphi(a)$ (see also \cite[Sec.~2.12]{PultrSichler2000}).
\end{remark}

\begin{definition} \label{def: SL}
\begin{enumerate}
    \item[]
    \item An L-space $X$ is \emph{L-spatial} or simply an \emph{SL-space} if the localic part $Y$ of $X$ is dense in $X$.
    \item Let \SL be the full subcategory of \LPries consisting of SL-spaces. 
\end{enumerate}
\end{definition}

As a consequence of Theorems~\ref{thm: PS} and \ref{thm: spatial} we obtain:

\begin{corollary} \label{thm: sfrm=sl}
    \SFrm is dually equivalent to \SL.
\end{corollary}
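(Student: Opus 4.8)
The plan is to show that the dual equivalence of \cref{thm: PS} between \Frm and \LPries restricts to a dual equivalence between the full subcategories \SFrm and \SL. Since \SFrm and \SL are full subcategories, it suffices to verify that the contravariant functors $\functor{X}$ and $\functor{D}$ carry objects of one category to objects of the other; the natural isomorphisms then restrict automatically by \cref{rem: units}, and the restricted functors inherit the dual equivalence.

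First I would handle $\functor{X}$. Let $L$ be a spatial frame. Its Priestley space $X_L$ is an L-space by \cref{thm: PS}, and by the equivalence (1)$\Leftrightarrow$(2) of \cref{thm: spatial} the localic part $Y_L$ is dense in $X_L$. Hence $\functor{X}(L) = X_L$ is an SL-space in the sense of \cref{def: SL}, so $\functor{X}$ maps \SFrm into \SL.

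Next I would handle $\functor{D}$. Let $X$ be an SL-space and set $L := \clopup(X)$, so that $\functor{D}(X) = L$ is a frame (as $X$ is an L-space). To show $L$ is spatial I would invoke \cref{thm: spatial} for $L$, using the unit $\varepsilon : X \to X_L$, which is an isomorphism of (L-)spaces. The localic part is defined intrinsically by the condition that $\downset y$ be clopen (\cref{def: localic part}), matching $Y_L$ via \cref{lemma:downset-y-clopen}; since $\varepsilon$ is an order-isomorphism and a homeomorphism, it preserves this condition and therefore restricts to a bijection of the localic part $Y$ of $X$ onto $Y_L$. As $X$ is an SL-space, $Y$ is dense in $X$, so $Y_L$ is dense in $X_L$, and the equivalence (1)$\Leftrightarrow$(2) of \cref{thm: spatial} yields that $L$ is spatial. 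Thus $\functor{D}$ maps \SL into \SFrm.

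The only point requiring care — and the main, if modest, obstacle — is ensuring that ``localic part'' is a Priestley-invariant notion, so that density can be transported across $\varepsilon$. This is precisely what \cref{def: localic part} and \cref{lemma:downset-y-clopen} provide: the property that $\downset y$ be clopen is preserved by every isomorphism of Priestley spaces, so $\varepsilon$ maps $Y$ bijectively onto $Y_L$; since $\varepsilon$ is in addition a homeomorphism of the ambient spaces, $Y$ is dense in $X$ iff $Y_L$ is dense in $X_L$. With both functors shown to restrict and the units restricting by \cref{rem: units}, the dual equivalence of \cref{thm: PS} descends to the desired dual equivalence between \SFrm and \SL.
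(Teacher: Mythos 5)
Your proof is correct and follows essentially the same route as the paper, which obtains the corollary directly by combining \cref{thm: PS} with \cref{thm: spatial}: since \SFrm and \SL are full subcategories, the dual equivalence restricts once objects are matched, which is exactly what the equivalence (1)$\Leftrightarrow$(2) of \cref{thm: spatial} provides. Your additional care in transporting density along the unit $\varepsilon$ just makes explicit what the paper leaves implicit.
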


We next connect \SL with \Sob. In order to do so, we show that mapping an L-space to its localic part is functorial. For this we need the following lemmas.

\begin{lemma} \label{lem: Y is sober}
Let $X$ be an L-space and $Y$ the localic part of $X$. Then $Y$ is a sober space.
\end{lemma}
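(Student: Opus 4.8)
We must show that the localic part $Y$ of an L-space $X$ is sober. The plan is to work entirely with the topology on $Y$ inherited from the clopen upsets of $X$, using the dictionary between points of $Y$ and completely prime filters. Since $Y = Y_L$ for $L = \functor{D}X = \clopup(X)$ (by \cref{lemma:downset-y-clopen} the abstract localic part agrees with the one coming from the frame $\clopup(X)$), and since $L$ is a frame with $pt(L) = Y$, it suffices to show that the space of points of any frame is sober. This is classical, but the point here is to exhibit the argument in the Priestley setting so that it is self-contained.

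\emph{First} I would recall that a space is sober iff every irreducible closed set is the closure of a unique point, equivalently iff every completely prime filter of its open-set frame is a neighborhood filter of a unique point. By \cref{lemma:zeta-is-phi-restricted}, open subsets of $Y$ are exactly the sets $\zeta(a) = \varphi(a) \cap Y$ for $a \in L = \clopup(X)$, and the assignment $a \mapsto \zeta(a)$ identifies (a quotient of) $L$ with $\mathcal O(Y)$. \emph{Next}, given an irreducible closed set $C \subseteq Y$, its complement corresponds to a prime element, i.e. the family $F = \{a \in L \mid \zeta(a) \supseteq Y \setminus C\}$ is a completely prime filter of $L$: primality of the open set $Y \setminus C$ translates exactly into complete primeness of $F$ (here one uses that $\zeta$ preserves arbitrary joins, which follows from \cref{lemma:joins-in-priestley} together with \cref{lemma:zeta-is-phi-restricted}). \emph{Then} a completely prime filter of $L$ is by definition a localic point, hence an element $y \in Y$; one checks $\cl_Y(\{y\}) = C$ by verifying that $y \in \zeta(a)$ iff $\zeta(a)$ meets every nonempty open subset of $C$, which unwinds to $a \in F$.

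For \emph{uniqueness}, suppose $y, y' \in Y$ both have closure $C$. Since $Y$ with the topology of \cref{def: localic part} is $T_0$ — two distinct localic points are separated by some $\varphi(a)$ because the clopen upsets form a subbasis (\cref{lemma:properties-Priestley-0}) and, being distinct prime filters, $y,y'$ differ on some clopen upset — distinct points cannot share a closure. Hence the point is unique, and $Y$ is sober.

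The main obstacle I anticipate is bookkeeping rather than conceptual: one must be careful that the correspondence $a \mapsto \zeta(a)$ genuinely sends arbitrary frame joins of $L$ to the topological closures/unions required for $\mathcal O(Y)$, so that complete primeness of the filter $F$ is correctly extracted from irreducibility of $C$. The cleanest route is to invoke the identification $\mathcal O(Y) \cong L$ as frames via $\zeta$ (surjective by \cref{lemma:zeta-is-phi-restricted}, join-preserving by \cref{lemma:joins-in-priestley}) and then cite the standard fact that $pt(L)$ is sober for any frame $L$; but since the aim of the section is to reprove such facts inside the Priestley framework, I would instead spell out the completely-prime-filter argument directly in terms of clopen upsets of $X$ as sketched above.
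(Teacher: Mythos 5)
Your primary reduction---identifying $Y$ with $pt(\clopup(X))$ via \cref{lemma:zeta-is-phi-restricted,lemma:downset-y-clopen} and then invoking sobriety of the space of points of a frame---is exactly the paper's proof, which consists of precisely these two steps plus a citation for the classical fact. Had you stopped at your "cleanest route," the proposal would be correct and essentially identical to the paper's argument.

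However, you explicitly commit to spelling out the completely-prime-filter argument instead, and the version you sketch contains a genuine error: for an irreducible closed set $C \subseteq Y$, the family $F = \{a \in L \mid \zeta(a) \supseteq Y \setminus C\}$ is \emph{not} the completely prime filter corresponding to $C$; it is the principal filter of opens lying above the prime open $Y \setminus C$, which is off by a complementation. Concretely, take $Y$ to be the Sierpi\'nski space $\{0,1\}$ with $\{1\}$ open and $L = \mathcal O(Y)$: for $C = Y = \cl\{1\}$ your recipe gives $F = L$, which contains $\varnothing = \bigvee \varnothing$ and so is not even a proper filter (complete primeness would force $\varnothing \cap F \neq \varnothing$); for $C = \{0\} = \cl\{0\}$ it gives $F = \{\{1\}, Y\}$, the neighborhood filter of the point $1 \notin C$ rather than of the generic point $0$. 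The correct filter is $F = \{a \in L \mid \zeta(a) \cap C \neq \varnothing\}$, equivalently $\{a \mid \zeta(a) \not\subseteq Y \setminus C\}$: it is a filter because irreducibility of $C$ means that two opens meeting $C$ have intersection meeting $C$, and it is completely prime because $\zeta$ preserves arbitrary joins. Note also that this join preservation does not follow from \cref{lemma:joins-in-priestley,lemma:zeta-is-phi-restricted} alone, as you claim; one additionally needs $\cl U \cap Y = U \cap Y$ for open upsets $U$ (this is \cref{cor:y-pulls-closure-1}, which appears later in the paper but is proved independently, directly from openness of $\downset y$ for $y \in Y$). With the corrected $F$, the verification of $\cl_Y\{y\} = C$ is the standard one---$y \in \zeta(a)$ iff $a \in F$ iff $\zeta(a)$ meets $C$---rather than the condition involving "every nonempty open subset of $C$" that you state. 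Your uniqueness argument via the $T_0$ property is correct as written.
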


\begin{proof}
    By \cref{lemma:zeta-is-phi-restricted,lemma:downset-y-clopen}, $Y$ is homeomorphic to $pt(\clopup(X))$. Thus, $Y$ is sober (see, e.g., \cite[p.~20]{PicadoPultr2012}).
\end{proof}

\begin{lemma}
    Let $X_1$, $X_2$ be L-spaces, $Y_1, Y_2$ their localic parts, and $f : X_1 \to X_2$ an L-morphism.
    \begin{enumerate}[ref=\thelemma(\arabic*)]
        \item $f(Y_1) \subseteq Y_2$. \label[lemma]{lem: f restricts}
        \item The restriction $f : Y_1 \to Y_2$ is a well-defined continuous map. \label[lemma]{lem: f restricts to cts}
    \end{enumerate}
\end{lemma}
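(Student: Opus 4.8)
The plan is to prove the two parts in order, with essentially all of the content concentrated in part~(1); part~(2) will then follow from a routine preimage computation. Before starting, I would record a reformulation of membership in the localic part that is tailored to the defining closure condition on L-morphisms: for a point $y$ of an L-space $X$,
\[
    y \in Y \iff \bigl(y \in \cl U \Rightarrow y \in U \text{ for every open upset } U \text{ of } X\bigr).
\]
To establish this I would apply \cref{lem: Scott upset} to the closed upset $\upset y$ (which is closed by \cref{lemma:properties-Priestley-2.5}, being the up-closure of the point $y$). Since $\min(\upset y) = \{y\}$, the set $\upset y$ is a Scott upset exactly when $y \in Y$. Moreover, as the closure of an open upset is again an upset (\cref{lem:esakia-int-downsets}), we have $\upset y \subseteq \cl U$ iff $y \in \cl U$, and $\upset y \subseteq U$ iff $y \in U$; thus the Scott-upset criterion of \cref{lem: Scott upset} for $\upset y$ is precisely the displayed implication.

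For part~(1), I would fix $y \in Y_1$ and an open upset $U$ of $X_2$ with $f(y) \in \cl U$, and show $f(y) \in U$. Since $f$ is a Priestley morphism, $f^{-1}(U)$ is an open upset of $X_1$. From $f(y) \in \cl U$ we get $y \in f^{-1}(\cl U)$, and the defining property of an L-morphism (\cref{def:l-morph}) gives $f^{-1}(\cl U) = \cl f^{-1}(U)$, whence $y \in \cl f^{-1}(U)$. Applying the reformulation above to the localic point $y$ and the open upset $f^{-1}(U)$ yields $y \in f^{-1}(U)$, that is, $f(y) \in U$. As $U$ was arbitrary, the reformulation gives $f(y) \in Y_2$, so $f(Y_1) \subseteq Y_2$.

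Granting part~(1), the restriction $f \colon Y_1 \to Y_2$ is well defined, and only continuity remains. A basic open subset of $Y_2$ has the form $V \cap Y_2$ with $V \in \clopup(X_2)$ (\cref{def: localic part}). Its preimage under the restriction is $\{y \in Y_1 \mid f(y) \in V\} = f^{-1}(V) \cap Y_1$, where I use part~(1) to drop the superfluous constraint $f(y) \in Y_2$. Because $f$ is a Priestley morphism, $f^{-1}(V)$ is a clopen upset of $X_1$, so $f^{-1}(V) \cap Y_1$ is open in $Y_1$, giving continuity.

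The main obstacle is part~(1): read through Pultr--Sichler duality, it asserts that L-morphisms carry completely prime filters to completely prime filters (i.e.\ that the $pt$-functor is well defined), and the key point is that this is forced by the single axiom $f^{-1}\cl U = \cl f^{-1}(U)$. The reformulation of the localic part in terms of that same closure operation is exactly what makes the two conditions lock together; attempting instead to verify directly that $\downset f(y)$ is clopen (via \cref{lemma:downset-y-clopen}) seems considerably more awkward. As a sanity check, the algebraic translation confirms the result: $f$ corresponds to a frame homomorphism $h$ with $f = h^{-1}$, and $h^{-1}(y)$ is completely prime whenever $y$ is, since $h$ preserves arbitrary joins.
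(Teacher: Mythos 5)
Your proposal is correct, and your part (2) coincides with the paper's argument, but your part (1) takes a genuinely different route. The paper argues directly from the definition of the localic part: it tests the L-morphism axiom against the single open upset $U = (\downset f(y))^c$ of $X_2$, uses openness of $\downset y$ to get $y \notin \cl f^{-1}(U) = f^{-1}(\cl U)$, deduces $f(y) \in \int \downset f(y)$, and then invokes \cref{lem:esakia-int-downsets} to conclude that $\downset f(y)$ is open, hence clopen, so $f(y) \in Y_2$. You instead factor the whole argument through \cref{lem: Scott upset}: since $\upset y$ is a closed upset with $\min(\upset y) = \{y\}$, membership $y \in Y$ is equivalent to the criterion ``$y \in \cl U \Rightarrow y \in U$ for every open upset $U$,'' and part (1) becomes a symmetric transfer of this criterion along the identity $f^{-1}\cl U = \cl f^{-1}(U)$, quantifying over all open upsets rather than exhibiting one witness. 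The underlying mechanism is the same --- your backward direction (criterion at $f(y)$ implies $f(y) \in Y_2$), if unwound through the proof of \cref{lem: Scott upset}, is essentially the paper's witness $(\downset f(y))^c$ together with the Esakia interior argument --- but your packaging buys a reusable characterization of localic points ($y \in Y$ iff $\upset y$ is a Scott upset), makes the transfer step a clean two-line chase, and connects the lemma conceptually to preservation of Scott-open/completely prime filters; the paper's version buys self-containedness, relying only on the basic Priestley and Esakia facts collected in \cref{lemma:properties-Priestley} rather than on the imported Scott-upset characterization.
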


\begin{proof}
   (1)  Let $y \in Y_1$ and set $U = (\downset f(y))^c$. Since $y \not \in f^{-1}(U)$ and $f^{-1}(U)$ is an upset, $\downset y \cap f^{-1}(U) = \varnothing$. Because $y\in Y_1$, we have $\downset y$ is open, so $y \not\in \cl f^{-1}(U) = f^{-1}(\cl U)$  (see \cref{def:l-morph}). 
    Therefore, $f(y) \not \in \cl U $, and so $f(y) \in \int\downset f(y)$.
    By \cref{lem:esakia-int-downsets}, $\int \downset f(y)$ is a downset. 
    Thus, $f(y) \in \int\downset f(y)$ implies that $\downset f(y) = \int\downset f(y)$, hence $\downset f(y)$ is open. Consequently, $f(y)\in Y_2$.
    
    (2) That the restriction of $f$ is well defined follows from (1). For continuity, it suffices to show that $f^{-1}(U \cap Y_2) \cap Y_1$ is open in $Y_1$ for every clopen upset $U$ of $X_2$. By (1), $f^{-1}(U \cap Y_2) \cap Y_1 = f^{-1}(U) \cap Y_1$. Since $f$ is a Priestley morphism, $f^{-1}(U)$ is a clopen upset of $X_1$. Thus, $f^{-1}(U) \cap Y_1$ is an open subset of $Y_1$. 
\end{proof}

We define a functor $\functor Y : \SL \to \Sob$ by sending an SL-space $X$ to its localic part $Y$, and an L-morphism $f:X_1\to X_2$ to its restriction $f:Y_1\to Y_2$.
It follows easily from Lemmas~\ref{lem: Y is sober} and~\ref{lem: f restricts to cts} that $\functor Y$ is a well-defined covariant functor. 

\begin{theorem} \label{thm: Y is functor}
    $\functor Y$ is essentially surjective.
\end{theorem}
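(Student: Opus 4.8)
The goal is to show that every sober space arises, up to homeomorphism, as the localic part of some SL-space. The natural strategy is to exploit the machinery already assembled: \cref{thm: sfrm=sl} gives a dual equivalence between \SFrm and \SL, and \cref{SFrm-deqv-Sob} gives a dual equivalence between \SFrm and \Sob. Rather than invoke these abstractly, though, the cleanest route is to produce, for a given sober space $Z$, an explicit SL-space whose localic part is homeomorphic to $Z$, so that the essential surjectivity of $\functor Y$ is witnessed concretely.

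\medskip
First I would take an arbitrary sober space $Z$ and form the frame $L = \mathcal O(Z)$ of its open sets. Since $Z$ is sober, the point functor recovers $Z$, i.e. $Z$ is homeomorphic to $pt(L)$; moreover $L = \mathcal O(Z)$ is spatial, so $L \in \SFrm$. Next I would pass to the Priestley space $X_L$ of $L$, which by the results of \cref{sec: 3} is an L-space, and in fact an SL-space by \cref{thm: spatial} together with \cref{def: SL}, since $L$ is spatial forces $Y_L$ to be dense in $X_L$. Thus $X_L$ is an object of \SL.

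\medskip
It then remains to identify the localic part of $X_L$ with $Z$. By \cref{lemma:zeta-is-phi-restricted} and \cref{lemma:downset-y-clopen}, the localic part $Y_L$ of $X_L$ is precisely $pt(L)$ topologized by the sets $\zeta(a) = \varphi(a) \cap Y_L$; this is exactly the space $pt(L)$ appearing in the dual adjunction $(\mathcal O, pt)$. Indeed the proof of \cref{lem: Y is sober} already records that $Y_L$ is homeomorphic to $pt(\clopup(X_L)) \cong pt(L)$. Chaining the homeomorphisms
\[
    \functor Y(X_L) = Y_L \cong pt(L) = pt(\mathcal O(Z)) \cong Z,
\]
where the last step uses sobriety of $Z$, establishes that $Z$ lies in the essential image of $\functor Y$. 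Since $Z$ was an arbitrary object of \Sob, this proves that $\functor Y$ is essentially surjective.

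\medskip
The only point requiring care is the bookkeeping in the final chain of homeomorphisms: one must confirm that the topology on $Y_L$ coming from \cref{def: localic part} matches the topology on $pt(L)$ coming from the dual adjunction, and that the isomorphism $\clopup(X_L) \cong L$ furnished by Pultr-Sichler duality (\cref{thm: PS}) respects the points. Both are essentially contained in \cref{lemma:zeta-is-phi-restricted} and the proof of \cref{lem: Y is sober}, so I expect no genuine obstacle here; the substance of the argument is simply assembling the already-established correspondences $Z \leftrightsquigarrow \mathcal O(Z) \leftrightsquigarrow X_{\mathcal O(Z)}$ and observing that $\functor Y$ transports the last back to the first.
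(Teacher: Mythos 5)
Your proposal is correct and follows essentially the same route as the paper: given a sober space $Z$, form $L = \mathcal O(Z)$, take its Priestley space, and use \cref{lemma:zeta-is-phi-restricted} (with sobriety giving $Z \cong pt(\mathcal O(Z))$) to identify the localic part with $Z$. The only difference is that you make explicit the check that $X_L$ is an SL-space via \cref{thm: spatial}, which the paper leaves implicit.
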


\begin{proof}
    Suppose $Z$ is a sober space. Then $Z$ is homeomorphic to $pt(\mathcal O(Z))$ (see, e.g., \cite[p.~20]{PicadoPultr2012}). Let $X$ be the Priestley space of $\mathcal O(Z)$. It follows from \cref{lemma:zeta-is-phi-restricted} that $pt(\mathcal O(Z))$ is (homeomorphic to) the localic part of $X$.
\end{proof}

To show that $\functor Y$ is full and faithful, we need the following lemmas.

\begin{lemma} \label{cor:y-pulls-closure}
    Let $X$ be an L-space and $Y$ the localic part of $X$. 
    \begin{enumerate}[ref=\thelemma(\arabic*)]
    \item $\cl U \cap Y = U \cap Y$ for each open upset $U$ of $X$. \label[lemma]{cor:y-pulls-closure-1}
    \item $\cl U \cap Y = U$ for each open set $U$ of $Y$. \label[lemma]{cor:y-pulls-closure-2}
\end{enumerate}
\end{lemma}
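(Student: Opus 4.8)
The plan is to establish (1) first, since it carries the real content, and then to read off (2) almost immediately from it together with the definition of the topology on $Y$. For (1) the inclusion $U \cap Y \subseteq \cl U \cap Y$ is immediate from $U \subseteq \cl U$, so everything rests on the reverse inclusion. I would fix $y \in \cl U \cap Y$ and prove $y \in U$ by contradiction. Assuming $y \notin U$ and using that $U$ is an upset, any $z \le y$ with $z \in U$ would force $y \in U$; hence $\downset y \cap U = \varnothing$.

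The key step is to promote this disjointness from $U$ to disjointness from $\cl U$. This is where the defining property of a localic point enters: since $y \in Y$, the set $\downset y$ is clopen, in particular open, so its complement is a closed set containing $U$ and therefore containing $\cl U$. Thus $\downset y \cap \cl U = \varnothing$, contradicting $y \in \downset y \cap \cl U$. This forces $y \in U$, which gives $\cl U \cap Y \subseteq U \cap Y$ and hence (1).

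For (2) I would unwind the definition of the topology on $Y$ (\cref{def: localic part}): an open set $U$ of $Y$ has the form $U = V \cap Y$ for some clopen upset $V$ of $X$. The inclusion $U \subseteq \cl U \cap Y$ is immediate since $U \subseteq \cl U$ and $U \subseteq Y$. For the reverse, $V$ is closed in $X$ and contains $U$, so $\cl U \subseteq V$, whence $\cl U \cap Y \subseteq V \cap Y = U$. One could instead write $U = W \cap Y$ for an open upset $W$ and apply part (1), replacing $W$ by the clopen upset $\cl W$, but invoking the closed set $V$ directly is shorter.

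I expect the only genuine obstacle to be the separation argument in (1); the point to get right is that it is exactly the openness of $\downset y$ — the hallmark of $y$ being a localic point — that upgrades disjointness from $U$ to disjointness from $\cl U$. Note that this argument uses neither density of $Y$ nor extremal order-disconnectedness, so it applies to an arbitrary L-space. Part (2) is then pure bookkeeping against the definition of $Y$ as a topological space.
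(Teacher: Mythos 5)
Your proposal is correct and follows essentially the same route as the paper: part (1) hinges on exactly the same two facts (openness of $\downset y$ for $y \in Y$ together with $U$ being an upset), merely phrased as a contradiction instead of the paper's direct argument that the open neighborhood $\downset y$ of $y \in \cl U$ must meet $U$, whence some $x \le y$ lies in $U$ and the upset property gives $y \in U$; part (2) coincides with the paper's proof verbatim ($U = V \cap Y$ with $V$ clopen, so $\cl U \subseteq V$ and $\cl U \cap Y \subseteq V \cap Y = U$).
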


\begin{proof}
    (1) We clearly have that $U \cap Y\subseteq\cl U \cap Y$. For the reverse inclusion, let $y \in \cl U \cap Y$. Since $y \in Y$, we have that $\downset y$ is open. Hence, $\downset y \cap U \neq \varnothing$. Thus, there is $x \in U$ with $x \leq y$. Since $U$ is an upset, we must have $y \in U$, so $y \in U \cap Y$. 
    
    (2) Since $U$ is open in $Y$, there is a clopen upset $V$ of $X$ such that $U = V \cap Y$. Thus, $\cl U \subseteq V$, and hence $U\subseteq\cl U \cap Y\subseteq V \cap Y = U$. 
\end{proof}

\begin{lemma} \label{lem:cl-respects-meet-2}
Let $X$ be an SL-space and $Y$ the localic part of $X$. For open subsets $U$ and $V$ of $Y$ we have $\cl U \cap \cl V = \cl(U \cap V)$. 
\end{lemma}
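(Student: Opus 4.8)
The plan is to reduce everything to clopen upsets of $X$ and compute the three closures explicitly. Throughout, $\cl$ denotes closure in the ambient L-space $X$; this is the reading that makes the statement consistent with \cref{cor:y-pulls-closure-2}, where $U$ ranges over open sets of $Y$ but $\cl U$ is read as a subset of $X$. Since $U$ and $V$ are open in $Y$, by \cref{def: localic part} there are clopen upsets $U',V'$ of $X$ with $U = U'\cap Y$ and $V = V'\cap Y$. The inclusion $\cl(U\cap V)\subseteq\cl U\cap\cl V$ is automatic in any space, so the content lies in the reverse inclusion; in fact I will obtain equality directly by showing that all three closures are honest clopen upsets of $X$.

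The key tool is the density fact underlying \cref{remark:spatial}: for any clopen subset $W$ of an SL-space $X$ one has $\cl(W\cap Y)=W$. Applying this to $U'$ and $V'$ gives $\cl U = \cl(U'\cap Y) = U'$ and $\cl V = V'$. For the meet, observe that $U\cap V = (U'\cap Y)\cap(V'\cap Y) = (U'\cap V')\cap Y$ and that $U'\cap V'$ is again a clopen upset of $X$; applying the same fact once more yields $\cl(U\cap V) = \cl\bigl((U'\cap V')\cap Y\bigr) = U'\cap V'$. Hence $\cl U\cap\cl V = U'\cap V' = \cl(U\cap V)$, which is exactly the claimed identity.

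The only real obstacle is justifying $\cl(W\cap Y)=W$ for clopen $W$, and this is precisely where the SL-hypothesis, namely density of $Y$ in $X$ (\cref{Y-dense-in-X}), is indispensable. One inclusion, $\cl(W\cap Y)\subseteq W$, uses only that $W$ is closed; the reverse inclusion $W\subseteq\cl(W\cap Y)$ is the substantive one: given $x\in W$ and any open neighborhood $N$ of $x$ in $X$, the set $N\cap W$ is a nonempty open subset of $X$, so density forces $N\cap W\cap Y\neq\varnothing$, whence $x\in\cl(W\cap Y)$. It is worth emphasizing that density is genuinely needed here: for a general sober space the analogous identity with closures taken \emph{inside} $Y$ can fail, so the argument essentially trades the bad behavior of $\cl$ on $Y$ for the clopen rigidity of $X$, where finite intersections of clopen upsets remain clopen upsets.
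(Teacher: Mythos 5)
Your proof is correct and follows essentially the same route as the paper's: decompose $U = U'\cap Y$, $V = V'\cap Y$ with $U',V'\in\clopup(X)$, use density of $Y$ (the SL-hypothesis) to identify $\cl U = U'$, $\cl V = V'$, and then apply the same density fact to the clopen upset $U'\cap V'$ to get $\cl(U\cap V)=U'\cap V'$. The only difference is that you spell out the proof of $\cl(W\cap Y)=W$ for clopen $W$, which the paper simply cites as a known consequence of density (its Remark following the spatiality theorem).
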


\begin{proof}
    Let $U$ and $V$ be open subsets of $Y$. Then there exist $U',V'\in\clopup(X)$ such that $U=U'\cap Y$ and $V=V'\cap Y$. Since $X$ is L-spatial, $Y$ is dense in $X$, so $U'=\cl U$ and $V'=\cl V$. Therefore, because $U'\cap V'$ is clopen in $X$, we have
    \[
    U'\cap V'=\cl((U'\cap V')\cap Y)=\cl((U'\cap Y)\cap (V'\cap Y))=\cl(U\cap V).
    \] 
    Thus, $\cl U \cap \cl V = U' \cap V' = \cl(U\cap V).$
\end{proof}

\begin{lemma} \label{lemma: prime filter cmap}
    Let $X_1, X_2$ be SL-spaces, $Y_1, Y_2$ their localic parts, $g : Y_1 \to Y_2$ a continuous map, and $x\in X_1$. Then  $\mathcal P_x := \{U \in \clopup(X_2) \mid x \in \cl[g^{-1}(U \cap Y_2)]\}$ is a prime filter in $\clopup(X_2)$. 
\end{lemma}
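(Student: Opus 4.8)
The plan is to verify directly that $\mathcal{P}_x$ satisfies the five defining conditions of a prime filter in the bounded distributive lattice $\clopup(X_2)$, recalling that here the lattice operations are concrete: the top and bottom elements are $X_2$ and $\varnothing$, binary meet is $\cap$, and binary join is $\cup$. Throughout I will write $A_U := g^{-1}(U \cap Y_2)$ for a clopen upset $U$ of $X_2$; since $U \cap Y_2$ is open in $Y_2$ and $g$ is continuous, each $A_U$ is an open subset of $Y_1$, and closures are taken in $X_1$ (note $A_U \subseteq Y_1 \subseteq X_1$ while $x \in X_1$ need not lie in $Y_1$). With this notation, $U \in \mathcal{P}_x$ simply means $x \in \cl A_U$.

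Four of the five conditions I expect to be routine. For the top element, $X_2 \cap Y_2 = Y_2$ gives $A_{X_2} = g^{-1}(Y_2) = Y_1$, and since $X_1$ is an SL-space its localic part $Y_1$ is dense (\cref{def: SL}), so $x \in X_1 = \cl Y_1 = \cl A_{X_2}$; thus $X_2 \in \mathcal{P}_x$. For properness, $A_\varnothing = g^{-1}(\varnothing) = \varnothing$, so $\cl A_\varnothing = \varnothing$ and $\varnothing \notin \mathcal{P}_x$. Upward closure is immediate from monotonicity of preimage and of closure: if $U \subseteq V$ then $A_U \subseteq A_V$, whence $x \in \cl A_U \subseteq \cl A_V$. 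Primeness likewise follows from the fact that closure commutes with finite unions: since preimage and intersection-with-$Y_2$ both distribute over unions, $A_{U \cup V} = A_U \cup A_V$, so $x \in \cl A_{U \cup V} = \cl A_U \cup \cl A_V$ forces $x \in \cl A_U$ or $x \in \cl A_V$, i.e.\ $U \in \mathcal{P}_x$ or $V \in \mathcal{P}_x$.

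The one step that is not formal is closure under binary meets, and this will be the main obstacle. Given $U, V \in \mathcal{P}_x$ I must show $x \in \cl A_{U \cap V}$, where the same distributivity computation gives $A_{U \cap V} = A_U \cap A_V$. The hypothesis only yields $x \in \cl A_U \cap \cl A_V$, and in a general topological space one has merely $\cl(A_U \cap A_V) \subseteq \cl A_U \cap \cl A_V$, the wrong direction. The point is that $A_U$ and $A_V$ are \emph{open subsets of $Y_1$}, so the reverse inclusion $\cl A_U \cap \cl A_V \subseteq \cl(A_U \cap A_V)$ is supplied precisely by \cref{lem:cl-respects-meet-2}, which holds exactly because $X_1$ is an SL-space (so $Y_1$ is dense). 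Applying that lemma gives $\cl A_U \cap \cl A_V = \cl(A_U \cap A_V) = \cl A_{U \cap V}$, and hence $x \in \cl A_{U \cap V}$, i.e.\ $U \cap V \in \mathcal{P}_x$. Combining the five verifications shows $\mathcal{P}_x$ is a prime filter, and it is worth noting that L-spatiality is used in an essential way only in the top-element and meet-closure steps.
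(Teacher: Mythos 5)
Your proof is correct and follows essentially the same route as the paper: the only non-routine step is closure under binary meets, which both you and the paper handle by applying \cref{lem:cl-respects-meet-2} to the open subsets $g^{-1}(U \cap Y_2)$ and $g^{-1}(V \cap Y_2)$ of $Y_1$, using that $X_1$ is an SL-space. The paper dismisses the upset and primeness conditions as easy to see and does not even record the top-element and properness checks, so your write-up is just a more explicit version of the same argument.
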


\begin{proof}
    It is easy to see that $\mathcal P_x$ is an upset and that $U \cup V \in \mathcal P_x$ implies $U \in \mathcal P_x$ or $V \in \mathcal P_x$. 
    Let $U,V \in \mathcal P_x$. Then $x\in \cl[g^{-1}(U\cap Y_2)], \cl[g^{-1}(V\cap Y_2)]$. Since $U \cap Y_2, V \cap Y_2$ are open in $Y_2$ and $g$ is continuous,  $g^{-1}(U \cap Y_2),g^{-1}(V \cap Y_2)$ are open in $Y_1$. Therefore, by \cref{lem:cl-respects-meet-2},
    \begin{align*}
    x \in \cl[g^{-1}(U\cap Y_2)] \cap \cl[g^{-1}(V\cap Y_2)]    
    &= \cl[g^{-1}(U\cap Y_2) \cap g^{-1}(V\cap Y_2)]\\
    &= \cl[g^{-1}((U \cap V) \cap Y_2)].
    \end{align*}
Thus, $U \cap V \in \mathcal P_x$, and hence $\mathcal P_x$ is a prime filter.
\end{proof}

\begin{lemma}  \label{lemma: g extends}
    Suppose that $X_1, X_2$ are SL-spaces, $Y_1, Y_2$ are their localic parts, and $g : Y_1 \to Y_2$ is a continuous map. Then there is an L-morphism $f : X_1 \to X_2$ which extends $g$.
\end{lemma}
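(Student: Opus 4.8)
The plan is to build $f$ directly from $g$ using the prime filters $\mathcal P_x$ produced in \cref{lemma: prime filter cmap}, and then to recognize the resulting map as the Priestley dual of a frame homomorphism $\clopup(X_2) \to \clopup(X_1)$, which is what forces it to be an L-morphism.

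First I would define $f$ and extract a usable formula for $f^{-1}$. For $x \in X_1$, \cref{lemma: prime filter cmap} gives that $\mathcal P_x = \{V \in \clopup(X_2) \mid x \in \cl[g^{-1}(V \cap Y_2)]\}$ is a prime filter of $\clopup(X_2)$, so by \cref{lemma:properties-Priestley-4} there is a unique $z \in X_2$ with $\bigcap \mathcal P_x = \upset z$; set $f(x) := z$. Since the prime filters of $\clopup(X_2)$ are precisely the sets $\varepsilon(z) = \{V \mid z \in V\}$ (see \cref{rem: X and D}) and $\bigcap \varepsilon(z) = \upset z$, we get $\mathcal P_x = \varepsilon(f(x))$, which yields the key identity
\[
f^{-1}(V) = \cl[g^{-1}(V \cap Y_2)] \qquad \text{for every } V \in \clopup(X_2).
\]
As $g$ is continuous and $V \cap Y_2$ is open in $Y_2$, the set $g^{-1}(V \cap Y_2)$ is open in $Y_1$, hence equals $W \cap Y_1$ for some $W \in \clopup(X_1)$; density of $Y_1$ and \cref{remark:spatial} then give $f^{-1}(V) = \cl(W \cap Y_1) = W \in \clopup(X_1)$. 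Thus $f^{-1}$ carries clopen upsets to clopen upsets and, on complements, clopen downsets to clopen downsets, so $f$ is continuous by \cref{lemma:properties-Priestley-0}; as each $f^{-1}(V)$ is an upset, $f$ is order-preserving. Hence $f$ is a Priestley morphism.

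Next I would verify that $f$ extends $g$. Fix $y \in Y_1$ and $V \in \clopup(X_2)$. Using the identity above with \cref{cor:y-pulls-closure-2} applied to the open set $g^{-1}(V \cap Y_2)$ of $Y_1$, we have $y \in f^{-1}(V) \cap Y_1 = g^{-1}(V \cap Y_2)$ iff $g(y) \in V$, the last equivalence because $g(y) \in Y_2$. So $f(y)$ and $g(y)$ lie in exactly the same clopen upsets of $X_2$, whence $f(y) = g(y)$ by the Priestley separation axiom; that is, $f|_{Y_1} = g$.

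The remaining and principal obstacle is the L-morphism condition $f^{-1}\cl U = \cl f^{-1}(U)$ for all open upsets $U$ of $X_2$ (\cref{def:l-morph}). Rather than computing closures directly, which is delicate since a closed upset of an SL-space is not determined by its trace on the localic part, I would identify the bounded lattice homomorphism $h := f^{-1}|_{\clopup(X_2)} : \clopup(X_2) \to \clopup(X_1)$, for which $f = \functor X(h)$, as a \emph{frame} homomorphism. The assignments $V \mapsto V \cap Y_i$ and $U \mapsto \cl U$ are mutually inverse frame isomorphisms between $\clopup(X_i)$ and $\mathcal O(Y_i)$: they are bijections by density and \cref{remark:spatial}, and preserve arbitrary joins since for clopen upsets $V_j$ one has $\big(\bigvee_j V_j\big) \cap Y_i = \cl\big(\bigcup_j V_j\big) \cap Y_i = \bigcup_j (V_j \cap Y_i)$ by \cref{lemma:joins-in-priestley} and \cref{cor:y-pulls-closure-1}. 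By the key identity, $h$ is the composite of the isomorphism $\clopup(X_2) \cong \mathcal O(Y_2)$, the frame homomorphism $g^{-1} : \mathcal O(Y_2) \to \mathcal O(Y_1)$, and the inverse isomorphism $\mathcal O(Y_1) \cong \clopup(X_1)$. Being a composite of frame homomorphisms, $h$ is itself a frame homomorphism, so $f = \functor X(h)$ satisfies the required condition and is therefore an L-morphism extending $g$. I expect the verification that these two maps are frame isomorphisms to be the main technical point, with everything else following from the results already established.
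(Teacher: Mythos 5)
Your proposal is correct, and on the key step it takes a genuinely different route from the paper. The skeleton is shared: both proofs define $f(x)$ by $\bigcap\mathcal P_x = \upset f(x)$, both rest on the identity $f^{-1}(V)=\cl[g^{-1}(V\cap Y_2)]$ for $V\in\clopup(X_2)$ (you derive it from $\mathcal P_x=\varepsilon(f(x))$ via Priestley duality, the paper from compactness of $V$ and the fact that $\mathcal P_x$ is a filter), and the continuity, order-preservation, and extension arguments are essentially identical. The divergence is in verifying \cref{def:l-morph}. The paper does it directly: for $x\in f^{-1}(\cl U)$ with $U$ an open upset, it uses that $\cl U$ is open (extremal order-disconnectedness), extracts by compactness some $V\in\mathcal P_x$ with $V\subseteq\cl U$, and chains closures via \cref{cor:y-pulls-closure-1}. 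You instead prove that $h=f^{-1}|_{\clopup(X_2)}$ is a frame homomorphism, by exhibiting it as the composite of the frame isomorphisms $\clopup(X_i)\cong\mathcal O(Y_i)$ (valid for SL-spaces by density, \cref{remark:spatial}, \cref{lemma:joins-in-priestley}, and \cref{cor:y-pulls-closure-1}) with $g^{-1}:\mathcal O(Y_2)\to\mathcal O(Y_1)$, and then invoke the Pultr--Sichler characterization of frame homomorphisms among bounded lattice homomorphisms (the lemma stated just before \cref{def: L}) to conclude that its Priestley dual is an L-morphism. Your route is more structural: it makes explicit that for an SL-space $\clopup(X)$ is canonically isomorphic, as a frame, to $\mathcal O(Y)$, so L-morphisms between SL-spaces are forced to be duals of frame homomorphisms between the open-set frames of the localic parts; this is precisely the conceptual reason the functor $\functor Y$ turns out to be full, so your argument dovetails nicely with what follows in the paper. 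What it costs is the identification of $f$ with $\functor X(h)$ along the units $\varepsilon_i$ of Priestley duality, which you assert rather than check; the check is immediate from your own identity (it says exactly that $\functor X(h)\circ\varepsilon_1=\varepsilon_2\circ f$, and the condition in \cref{def:l-morph} transfers across order-homeomorphisms), but a careful write-up should include that sentence. The paper's computation, by contrast, is self-contained: it never leaves $X_1$ and $X_2$ and needs no functorial bookkeeping.
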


\begin{proof}
    Let $x \in X_1$. By \cref{lemma: prime filter cmap}, $\mathcal P_{x} = \{U \in \clopup(X_2) \mid x \in \cl[g^{-1}(U \cap Y_2)]\}$ is a prime filter of $\clopup(X)$. By \cref{lemma:properties-Priestley-4}, $\bigcap \mathcal P_{x} = \upset z$ for a unique $z \in X_2$. Define $f : X_1 \to X_2$ by $f(x) = z$ for each $x \in X_1$. It is clear that $f$ is a well-defined map. To see that $f$ extends $g$, suppose $y \in Y_1$. Then 
    \begin{align*}
    \upset g(y)
    &= \bigcap \{U \in \clopup(X_2) \mid g(y) \in U\} \\
    &= \bigcap \{U \in \clopup(X_2) \mid g(y) \in U \cap Y_2\} \\
    &= \bigcap \{U \in \clopup(X_2) \mid y \in g^{-1}(U \cap Y_2)\} \\
    &= \bigcap \{U \in \clopup(X_2) \mid y \in \cl[g^{-1}(U \cap Y_2)]\} 
    = \bigcap \mathcal P_y,
    \end{align*}
where the second to last equality follows from \cref{cor:y-pulls-closure-2}.
Thus, $f(y) = g(y)$ by definition of $f$. 
    
    To see that $f$ is continuous, suppose $U$ is a clopen upset of $X_2$. Then $U \cap Y_2$ is an open subset of $Y_2$. Since $g$ is continuous, $g^{-1}(U \cap Y_2)$ is an open subset of $Y_1$, and hence $\cl g^{-1}(U \cap Y_2)$ is a clopen upset of $X_1$ (because $X_1$ is L-spatial). But $\cl g^{-1}(U \cap Y_2) = f^{-1}(U)$ since by definition of $f$ we have
    \begin{align*}
        x \in \cl g^{-1}(U \cap Y_2) 
        &\iff U \in \mathcal P_x 
        \iff \bigcap \mathcal P_x \subseteq U 
        \iff  \upset f(x) \subseteq U \\
        &\iff f(x) \in U
        \iff x \in f^{-1}(U),
    \end{align*}
    where in the second equivalence we use that $U$ is clopen, hence compact.
    Thus, $f^{-1}(U)$ is a clopen upset of $X_1$. Since clopen downsets are complements of clopen upsets, we also obtain $f^{-1}(D)$ is a clopen downset for each clopen downset $D$ of $X_2$. Thus, $f$ is continuous since clopen upsets and clopen downsets form a subbasis of $X_2$ (see \cref{lemma:properties-Priestley-0}).
    
    To see that $f$ is order-preserving, since $\cl g^{-1}(U \cap Y_2) = f^{-1}(U)$ is an upset, $x \leq z$ implies $\mathcal P_{x} \subseteq \mathcal P_z$. Therefore, $\bigcap \mathcal P_z \subseteq \bigcap \mathcal P_x$, and hence $f(x) \leq f(z)$. Thus, $f$ is order-preserving.

    It is left to prove that $\cl f^{-1}(U) = f^{-1} \cl U$ for each open upset $U$ of $X_2$. The left-to-right inclusion follows from the continuity of $f$. For the right-to-left inclusion, 
    let $x \in f^{-1}(\cl U)$. Then $f(x) \in \cl U$, so $\upset f(x) \subseteq \cl U$ by \cref{lem:esakia-int-downsets}. Therefore, $\bigcap \mathcal P_x \subseteq \cl U$ and $\cl U$ is open since $U$ is an open upset and $X_2$ is an L-space. Since $\mathcal P_x$ is a filter, by compactness there is $V \in \mathcal P_x$ such that $V \subseteq \cl U$. The former means that $x \in \cl g^{-1}(V \cap Y_2) = \cl f^{-1}(V \cap Y_2)$, which together with the latter and \cref{cor:y-pulls-closure-1} gives  
    \[
    x \in \cl f^{-1}(V \cap Y_2) \subseteq \cl    (f^{-1}(\cl U \cap Y_2)) = \cl f^{-1}(U \cap Y_2) \subseteq \cl f^{-1}(U).
    \]
    Thus, $f$ is an L-morphism.
\end{proof}

\begin{theorem} \label{thm: Y is full and faithful}
    $\functor Y$ is full and faithful.
\end{theorem}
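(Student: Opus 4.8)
Since essential surjectivity is already handled by \cref{thm: Y is functor}, the remaining content is fullness and faithfulness, which I would treat separately. The decisive structural facts are that the localic part $Y_1$ is dense in an SL-space $X_1$ (by \cref{def: SL}) and that every Priestley space is a Stone space, hence Hausdorff. Almost all the genuine work has in fact already been done in \cref{lemma: g extends}, so what remains is short.

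For faithfulness, let $f_1, f_2 : X_1 \to X_2$ be L-morphisms with $\functor Y(f_1) = \functor Y(f_2)$. By definition of $\functor Y$ this says precisely that $f_1$ and $f_2$ restrict to the same map $Y_1 \to Y_2$, so $f_1$ and $f_2$ agree on $Y_1$. Because $X_1$ is an SL-space, $Y_1$ is dense in $X_1$; because $X_2$ is a Priestley space, it is Hausdorff. Two continuous maps into a Hausdorff space that agree on a dense subset coincide, and therefore $f_1 = f_2$. This is the only step where I expect any friction, and it is mild: one just has to observe that the restriction being equal on $Y_1$ lifts to equality on all of $X_1$ via this standard density-plus-Hausdorff argument.

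For fullness, let $g : Y_1 \to Y_2$ be a continuous map, i.e.\ a morphism of $\Sob$ between the images of $X_1$ and $X_2$ under $\functor Y$. By \cref{lemma: g extends} there is an L-morphism $f : X_1 \to X_2$ extending $g$, meaning $f(y) = g(y)$ for all $y \in Y_1$. Then $\functor Y(f)$ is by construction the restriction $f|_{Y_1} = g$, so $g$ lies in the image of $\functor Y$ on morphisms. Hence $\functor Y$ is full, and together with the faithfulness above this completes the proof. (Combined with \cref{thm: Y is functor}, this additionally yields that $\functor Y : \SL \to \Sob$ is an equivalence, which one can record as a corollary.)
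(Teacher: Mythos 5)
Your proof is correct and follows essentially the same route as the paper: fullness via \cref{lemma: g extends}, and faithfulness via the standard fact that continuous maps into a Hausdorff space agreeing on a dense subset coincide (using that $Y_1$ is dense in the SL-space $X_1$ and that $X_2$, being a Priestley space, is Hausdorff). The paper merely states the faithfulness argument contrapositively, which is the same reasoning.
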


\begin{proof}
    To see that $\functor Y$ is full, suppose $g : Y_1 \to Y_2$ is a  continuous map. By \cref{lemma: g extends}, there is an L-morphism $f : X_1 \to X_2$ extending $g$. Thus, $\functor Yf = g$.
    To see that $\functor Y$ is faithful, suppose $f_1, f_2 : X_1 \to X_2$ are L-morphisms with $f_1 \neq f_2$. Since $Y_1$ is a dense subset of $X_1$ and $X_2$ is Hausdorff, $f_1$ and $f_2$ must be the unique extensions of their restrictions $\functor Yf_1$ and $\functor Yf_2$ to $Y_1$ (see, e.g., \cite[p.~ 70]{Engelking1989}). Thus, $\functor Yf_1 \neq \functor Yf_2$. 
\end{proof}

\begin{corollary} \label{cor: sl=sob}
    \SL is equivalent to \Sob.
\end{corollary}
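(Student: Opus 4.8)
The plan is to assemble \cref{cor: sl=sob} directly from the functor $\functor Y : \SL \to \Sob$ together with the three structural facts already established about it. Recall that a functor induces an equivalence of categories precisely when it is full, faithful, and essentially surjective. The body of \cref{sec: 3.5} has been building exactly these three ingredients, so the proof reduces to citing them.

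First I would note that $\functor Y$ is a well-defined covariant functor, as observed immediately after \cref{lem: f restricts to cts}: it sends an SL-space to its localic part (a sober space by \cref{lem: Y is sober}) and an L-morphism to its restriction (a well-defined continuous map by \cref{lem: f restricts to cts}). Next I would invoke \cref{thm: Y is functor} to conclude that $\functor Y$ is essentially surjective, and then \cref{thm: Y is full and faithful} to conclude that $\functor Y$ is full and faithful. Combining these three properties yields that $\functor Y$ is an equivalence, and hence that \SL is equivalent to \Sob.

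Since all the substantive work has been offloaded into the preceding lemmas and theorems, there is essentially no obstacle remaining: the proof is a one-line appeal to the standard characterization of equivalences. The only point worth stating explicitly is which results supply which of the three conditions, so that the reader can trace the dependency. I would therefore write a short proof that simply cites \cref{thm: Y is functor} for essential surjectivity and \cref{thm: Y is full and faithful} for fullness and faithfulness, and concludes that $\functor Y$ witnesses the desired equivalence.

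\begin{proof}
    By \cref{thm: Y is functor}, $\functor Y$ is essentially surjective, and by \cref{thm: Y is full and faithful}, $\functor Y$ is full and faithful. Therefore, $\functor Y : \SL \to \Sob$ is an equivalence of categories.
\end{proof}
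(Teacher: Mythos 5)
Your proposal is correct and coincides with the paper's own proof: both cite \cref{thm: Y is functor} for essential surjectivity and \cref{thm: Y is full and faithful} for fullness and faithfulness, then invoke the standard fact (e.g., Mac Lane) that such a functor is an equivalence. There is nothing to add.
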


\begin{proof}
    By \cref{thm: Y is functor,thm: Y is full and faithful}, $\functor Y$ is essentially surjective, full, and faithful. Thus, $\functor Y$ is an equivalence (see, e.g., \cite[p.~93]{MacLane1998}).
\end{proof}

Combining \cref{thm: sfrm=sl,cor: sl=sob} yields an alternative proof of the well-known result mentioned in the introduction that \SFrm is dually equivalent to \Sob.
In the next section we will restrict the correspondence between \SFrm, \SL, and \Sob to obtain an alternative proof of Hofmann-Lawson duality.

\section{Deriving Hofmann-Lawson duality} 
\label{sec: 4}

\begin{definition} \label{def: kernel}
Suppose $X$ is an L-space. 
\begin{enumerate}
\item For $U,V\in\clopup(X)$, define $V \ll U$ provided for each open upset $W$ of $X$ we have $U \subseteq \cl W$ implies $V \subseteq W$.
\item For $U \in \clopup(X)$, define the \emph{kernel of $U$} as $$\ker U = \bigcup\{V \in \clopup(X) \mid V \ll U\}.$$ 
\end{enumerate}

If $X$ is the Priestley dual of a frame $L$ and $U = \varphi(a)$ for some $a \in L$, we simply write $\ker(a)$ for $\ker U$.
\end{definition}

\begin{lemma} \label{lem: ker properties}
    Let $X$ be an L-space and $U,V$ clopen upsets of $X$. 
    \begin{enumerate}[ref=\thelemma(\arabic*)]
        \item $\ker U$ is an open upset contained in $U$. \label[lemma]{lemma:kernel open upset}
        \item $\ker$ is monotone. \label[lemma]{lemma:kernel-monotone}
        \item $V \subseteq \ker U$ iff $V \ll U$. \label[lemma]{lemma:ll-iffs}
        \item $U \subseteq \cl W$ implies $\ker U \subseteq W$ for each open upset $W$. \label[lemma]{lemma:kernel-adjoint-of-closure}
    \end{enumerate}
    Moreover, if $X$ is the Priestley space of a frame $L$ and $a, b \in L$, then \begin{enumerate}[resume*]
    \item $a \ll b$ iff $\varphi(a) \ll \varphi(b)$ iff $\varphi(a) \subseteq \ker(b)$. \label[lemma]{lemma:ll-iffs-5}
\end{enumerate}
If in addition $L$ is spatial, then \begin{enumerate}[resume*]
    \item $a \ll b$ iff $\varphi(a) \subseteq \upset(\varphi(b) \cap Y_L)$.
    \end{enumerate}
\end{lemma}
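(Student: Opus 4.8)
I would prove the final statement (item (6)) by building on item (5), which already gives $a\ll b\iff\varphi(a)\subseteq\ker(b)$. So the task reduces to identifying $\ker(b)$ with $\upset(\varphi(b)\cap Y_L)$ under the spatiality hypothesis. The natural approach is to show directly that
\[
\ker(b)=\upset(\varphi(b)\cap Y_L),
\]
from which the desired equivalence follows immediately by combining with (5). Since $b=\bigvee\{a\mid a\ll b\}$ when $L$ is continuous is \emph{not} assumed here—only spatiality—I would not route through continuity, but instead exploit the fact that the localic part $Y_L$ carries the completely prime filters, which are exactly the ``points'' that witness the way-below relation.

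\emph{First,} for the inclusion $\ker(b)\subseteq\upset(\varphi(b)\cap Y_L)$, I would take $x\in\ker(b)$, so $x\in\varphi(a)$ for some $a\ll b$, i.e. $a\in x$ with $a\ll b$. The goal is to find a localic point $y\in\varphi(b)\cap Y_L$ with $y\le x$ (recall $y\le x$ means $y\subseteq x$ as filters, and $\upset$ is in the order of $X_L$). Here I would use the standard characterization of $\ll$ via completely prime filters: if $a\ll b$, then $b$ belongs to the completely prime filter, and one can produce a completely prime filter contained in $x$ that still contains $b$. Concretely, I would invoke spatiality together with \cref{lemma:zeta-is-phi-restricted} (so that $\varphi(b)\cap Y_L=\zeta(b)$) and \cref{Y-dense-in-X} to locate such a localic point below $x$.

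\emph{Second,} for the reverse inclusion $\upset(\varphi(b)\cap Y_L)\subseteq\ker(b)$, I would take $x$ with $y\le x$ for some $y\in\varphi(b)\cap Y_L$, and show $x\in\ker(b)$, i.e. find $a\ll b$ with $a\in x$. Since $y$ is a localic point lying in $\varphi(b)$ (equivalently $b\in y$), and $\downset y$ is clopen by \cref{lemma:downset-y-clopen}, I expect the way-below witnesses of $b$ to be captured by the clopen upset structure around $y$; combined with $y\le x$ this should place an appropriate $a\ll b$ into $x$.

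\emph{The main obstacle} I anticipate is the first inclusion: producing a localic point below a given point of $\ker(b)$. The subtlety is that $\ll$ is defined via \emph{all} open upsets and their closures, whereas localic points correspond only to completely prime filters, and bridging these requires the density of $Y_L$ together with the Scott-open/completely-prime filter interplay recorded in \cref{lemma:zeta-is-phi-restricted} and \cref{remark:spatial}. I would handle this by translating $a\ll b$ into the closure condition of \cref{lemma:ll-iffs} and then using \cref{remark:spatial} (that $\cl\zeta=\varphi$ on clopens) to pull the relevant point into $Y_L$. The remaining verifications—that the identification respects the orders and that the two displayed sets genuinely coincide—should be routine once the point-location step is in place.
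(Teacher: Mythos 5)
Your proposed reduction of item (6) to the set equality $\ker(b)=\upset(\varphi(b)\cap Y_L)$ contains a genuine gap: under the hypothesis of (6) --- spatiality alone --- this equality is false, so your second inclusion $\upset(\varphi(b)\cap Y_L)\subseteq\ker(b)$ cannot be proved. To see why, note that $\ker(b)\subseteq\varphi(b)$ with $\varphi(b)$ closed, while $\upset(\varphi(b)\cap Y_L)\supseteq\zeta(b)$ and $\cl\zeta(b)=\varphi(b)$ by \cref{remark:spatial}; hence your equality for all $b$ would force $\cl\ker(b)=\varphi(b)$ for all $b$, i.e.\ $L$ would be \emph{continuous} by \cref{thm:packed-iff-continuous}. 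But spatial frames need not be continuous. Concretely, take $L=\mathcal O(\mathbb{Q})$: for any nonempty open $U\subseteq V$, covering $V$ by the sets $\{x\in V\mid |x-\alpha|>1/n\}$ for an irrational $\alpha$ in the real closure of $U$ shows $U\not\ll V$, so $a\ll b$ only when $a=0$; thus $\ker(b)=\varnothing$ for every $b$, whereas $\upset(\varphi(b)\cap Y_L)\supseteq\zeta(b)\neq\varnothing$ for every $b\neq 0$. The conceptual point your plan misses is that $\upset(\varphi(b)\cap Y_L)$ is in general strictly larger than the union of the clopen upsets it contains: item (6) asserts only that every \emph{clopen upset} inside $\upset(\varphi(b)\cap Y_L)$ lies in $\ker(b)$, which is strictly weaker than the inclusion of sets. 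The identity you want, $\ker U=\upset(U\cap Y)$, is exactly \cref{lemma:hp-kernels} of the paper, and it genuinely requires the CL (continuity) hypothesis; your expectation that ``the way-below witnesses of $b$ are captured by the clopen upset structure around $y$'' is precisely what fails without local compactness.

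The paper proves (6) without ever asserting the false equality. The left-to-right direction (which is equivalent to your first, correct, inclusion $\ker(b)\subseteq\upset(\varphi(b)\cap Y_L)$) is done contrapositively: if $\varphi(a)\not\subseteq\upset(\varphi(b)\cap Y_L)$, pick $x\in\varphi(a)$ with $\downset x\cap\varphi(b)\cap Y_L=\varnothing$; then $\varphi(b)\cap Y_L\subseteq(\downset x)^c$, so by density $\varphi(b)\subseteq\cl\,(\downset x)^c$ while $\varphi(a)\not\subseteq(\downset x)^c$, whence $a\not\ll b$ by item (5). The right-to-left direction does not pass through $\ker$ at all but unwinds the definition of $\ll$ in $L$: if $b\le\bigvee S$, then $\varphi(b)\subseteq\cl\bigcup\varphi[S]$, so $\varphi(b)\cap Y_L\subseteq\bigcup\varphi[S]$ by \cref{cor:y-pulls-closure-1}, hence $\upset(\varphi(b)\cap Y_L)\subseteq\bigcup\varphi[S]$ since the latter is an upset; compactness of $\varphi(a)$ then yields a finite $T\subseteq S$ with $a\le\bigvee T$. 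So your first inclusion is salvageable along the paper's lines, but the overall strategy of establishing a set identity must be abandoned. (Note also that your proposal takes items (1)--(5) as given; the paper proves these as well, with (5) itself requiring a cover-based argument of comparable substance to the above.)
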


\begin{proof}
    (1) This is immediate from the definition of $\ker U$ since $V \ll U$ implies $V \subseteq U$. 
        
    (2) Let $U_1 \subseteq U_2$, and let $V$ be a clopen upset with $V \ll U_1$. 
    Suppose $W$ is an open upset such that $U_2 \subseteq \cl W$. 
    Then $U_1 \subseteq \cl W$, so $V \subseteq W$. 
    Hence, $V \ll U_2$. Consequently, $\ker U_1 \subseteq \ker U_2$.
    
    (3) The right-to-left implication is immediate from the definition. For the left-to-right implication, if $V \subseteq \ker U$ then by compactness there is a clopen upset $V' \ll U$  such that $V \subseteq V'$. Therefore, $V \ll U$.
    
    (4) Suppose $U \subseteq \cl W$ and let $x \in \ker U$. Then there is a clopen upset $V$ of $X$ with $x \in V \ll U$. Hence, $x \in V \subseteq W$.
    
    (5) Suppose that $a \ll b$ and $U$ is an open upset of $X$ such that $\varphi(b) \subseteq \cl U$.
    Since $U = \bigcup \varphi[S]$ for some $S \subseteq L$, by \cref{lemma:joins-in-priestley}, we have
    $$\varphi(b) \subseteq \cl\left(\bigcup\varphi[S]\right) = \varphi\left(\bigvee S\right).$$ Therefore, $b \leq \bigvee S$.
    Since $a \ll b$, there is a finite $T\subseteq S$ such that $a \leq \bigvee T$. Thus,
    $$\varphi(a) \subseteq \varphi\left(\bigvee T\right) = \bigcup\varphi[T] \subseteq \bigcup \varphi[S] = U.$$
    Consequently, $\varphi(a) \ll \varphi(b)$.

    Conversely, suppose that $\varphi(a) \ll \varphi(b)$. Therefore, $\varphi(b) \subseteq \cl U$ implies $\varphi(a) \subseteq U$ for all open upsets $U \subseteq X_L$.
    Let $b \leq \bigvee S$ for some $S \subseteq L$.
    Then $$\varphi(b) \subseteq \varphi
    \left(\bigvee S\right) = \cl\bigcup \varphi[S].$$
    By assumption, $\varphi(a) \subseteq \bigcup \varphi[S]$. Since $\varphi(a)$ is compact, $\varphi(a) \subseteq \varphi[T] = \varphi(\bigvee T)$ for some finite $T\subseteq S$.
    Thus, $a \leq \bigvee T$, and hence $a \ll b$.
    
    This proves that $a \ll b$ iff $\varphi(a) \ll \varphi(b)$. The latter is equivalent to $\varphi(a) \subseteq \ker(b)$ by (3).
    
    (6) Suppose that $\varphi(a) \not\subseteq \upset(\varphi(b) \cap Y_L)$. Then there is $x \in \varphi(a)$ such that $\downset x \cap \varphi(b) \cap Y_L = \varnothing$. Therefore, $\varphi(b) \cap Y_L \subseteq (\downset x)^c$. Since $L$ is spatial, \cref{remark:spatial} implies that $\varphi(b) \subseteq \cl (\downset x)^c$. Hence, $\varphi(b)$ is contained in the closure of the open upset $(\downset x)^c$, while $\varphi(a) \not\subseteq (\downset x)^c$. Thus, $a \not\ll b$ by (5).
    
    For the converse, suppose that $\varphi(a) \subseteq \upset(\varphi(b) \cap Y_L)$ and $b \leq \bigvee S$ for some $S \subseteq L$. Then $\varphi(b) \subseteq \cl{\bigcup \varphi[S]}$. Therefore, $\varphi(b) \cap Y_L \subseteq \cl (\bigcup \varphi[S]) \cap Y_L = \bigcup \varphi[S] \cap Y_L \subseteq \bigcup\varphi[S]$ by \cref{cor:y-pulls-closure-1}. Thus, $\upset(\varphi(b) \cap Y_L) \subseteq \bigcup \varphi[S]$. By assumption, $\varphi(a) \subseteq \bigcup \varphi[S]$. Since $\varphi(a)$ is compact, $\varphi(a) \subseteq \bigcup \varphi[T]$ for some finite $T\subseteq S$. Hence, $a\le\bigvee T$, and so $a\ll b$.
\end{proof}

\begin{remark}
    The equivalence of the first two items of \cref{lemma:ll-iffs-5} was first proved in \cite[Prop.~3.6]{PultrSichler1988}.
\end{remark}

\begin{definition} \label{def: packed}
Let $X$ be an L-space.
\begin{enumerate}
\item We call a clopen upset $U$ of $X$ \emph{packed} if $\ker U$ is dense in $U$.
\item We call $X$ a \emph{continuous L-space} or simply a \emph{CL-space} if each clopen upset of $X$ is packed.
\end{enumerate}
\end{definition}

\begin{theorem}  
Let $L$ be a frame, $X_L$ its Priestley space, and $a\in L$.
    \begin{enumerate}[ref=\thetheorem(\arabic*)]
        \item $a = \bigvee \{b \in L \mid b \ll a\}$ iff $\varphi(a)$ is packed. \label[theorem]{thm:packed-iff-continuous}
        \item $L$ is a continuous frame iff $X_L$ is a CL-space.
                \label[theorem]{thm:hp-iff-continuous}
    \end{enumerate}
\end{theorem}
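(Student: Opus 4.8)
The plan is to reduce everything to part (1), since by Priestley duality (equivalently Pultr--Sichler duality) the map $\varphi$ is a frame isomorphism of $L$ onto $\clopup(X_L)$, so the clopen upsets of $X_L$ are precisely the sets $\varphi(a)$ with $a\in L$. Granting part (1), the frame $L$ is continuous iff $a=\bigvee\{b\mid b\ll a\}$ for every $a$, iff $\varphi(a)$ is packed for every $a$, iff every clopen upset of $X_L$ is packed, which is exactly the assertion that $X_L$ is a CL-space. Thus the entire content lies in part (1).

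For part (1), I would fix $a\in L$ and set $c=\bigvee\{b\in L\mid b\ll a\}$, so that continuity at $a$ is the statement $c=a$, and then compute $\varphi(c)$ inside $X_L$. The first step is to identify the union of the $\varphi(b)$ with the kernel: by \cref{lemma:ll-iffs-5}, for a clopen upset $V=\varphi(b)$ we have $V\ll\varphi(a)$ iff $b\ll a$, so the family $\{\varphi(b)\mid b\ll a\}$ coincides exactly with the family $\{V\in\clopup(X_L)\mid V\ll\varphi(a)\}$ whose union defines $\ker\varphi(a)$. Hence $\bigcup\{\varphi(b)\mid b\ll a\}=\ker\varphi(a)$. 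Applying \cref{lemma:joins-in-priestley} then gives $\varphi(c)=\cl\bigcup\{\varphi(b)\mid b\ll a\}=\cl\ker\varphi(a)$.

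It remains to match $c=a$ with packedness. Here the key observation is that density of $\ker\varphi(a)$ in the clopen set $\varphi(a)$ is equivalent to the plain equality $\cl\ker\varphi(a)=\varphi(a)$ in $X_L$: since $\ker\varphi(a)\subseteq\varphi(a)$ by \cref{lemma:kernel open upset} and $\varphi(a)$ is closed, the closure of $\ker\varphi(a)$ taken in the subspace $\varphi(a)$ agrees with its closure in $X_L$, which is already contained in $\varphi(a)$. Combining this with the previous paragraph and the injectivity of $\varphi$, I obtain $c=a$ iff $\varphi(a)=\varphi(c)=\cl\ker\varphi(a)$ iff $\ker\varphi(a)$ is dense in $\varphi(a)$ iff $\varphi(a)$ is packed, as desired.

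The argument is largely an assembly of the preceding lemmas rather than a hard computation; the two places that require care are the exact identification $\bigcup\{\varphi(b)\mid b\ll a\}=\ker\varphi(a)$, which relies on $\varphi$ being a lattice isomorphism so that every clopen upset way below $\varphi(a)$ is itself some $\varphi(b)$, and the subspace-density reduction, where one must use that $\varphi(a)$ is closed to replace closure-in-$\varphi(a)$ by closure-in-$X_L$. Neither is a genuine obstacle, so I expect the careful bookkeeping around these two points to be the only real content.
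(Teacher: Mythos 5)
Your proposal is correct and follows essentially the same route as the paper: the paper's proof of (1) is exactly the chain $a = \bigvee\{b \mid b \ll a\}$ iff $\varphi(a) = \cl\ker(a)$ iff $\ker(a)$ is dense in $\varphi(a)$, obtained from \cref{lemma:joins-in-priestley} and \cref{lemma:ll-iffs-5}, with (2) then immediate. You have merely spelled out the bookkeeping the paper leaves implicit (surjectivity of $\varphi$ onto $\clopup(X_L)$ to identify $\bigcup\{\varphi(b)\mid b\ll a\}$ with $\ker\varphi(a)$, injectivity of $\varphi$, and the reduction of subspace density to the closure equality), all of which is sound.
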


\begin{proof}
    (1) By \cref{lemma:joins-in-priestley,lemma:ll-iffs-5},
    \begin{align*}
        a = \bigvee \{b \in L \mid b \ll a\} 
        \iff \varphi(a) = \cl\ker(a) 
        \iff \ker(a) \text{ is dense in } \varphi(a).
    \end{align*}
    (2) This follows from (1).
\end{proof}

It is a well-known fact (see, e.g., \cite[p.~289]{Johnstone1982}) that the way below relation on a continuous frame $L$ is interpolating (meaning that $a\ll b$ implies $a\ll c\ll b$ for some $c\in L$).
In \cite[Lem.~5.3]{PultrSichler2000} an alternate proof of this result is given in the language of Priestley spaces:

\begin{lemma} \label{lemma:a-ll-b-implies-c-inbetween}
    Let $X$ be a CL-space and $U,V\in\clopup(X)$. If $U \ll V$, then there is $W\in\clopup(X)$ such that $U \ll  W \ll V$.
\end{lemma}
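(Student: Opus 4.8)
The plan is to translate the standard interpolation argument for continuous frames into the language of kernels. By \cref{lemma:ll-iffs}, the assertion $U \ll W \ll V$ is equivalent to $U \subseteq \ker W$ together with $W \subseteq \ker V$, so I must produce a clopen upset $W$ that is way below $V$ yet has $U$ way below it. The central construction is the open upset
\[
O := \bigcup\{\ker W' \mid W' \in \clopup(X) \text{ and } W' \ll V\},
\]
which plays the role of the directed set $\{d \mid d \ll c \ll b \text{ for some } c\}$ appearing in the frame-theoretic proof. I would first record the easy observation that $\ll V$ is closed under finite unions of clopen upsets: if $W_1, W_2 \ll V$ and $V \subseteq \cl W$ for an open upset $W$, then $W_1 \cup W_2 \subseteq W$, so $W_1 \cup W_2 \ll V$. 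Hence the family $\{W' \in \clopup(X) \mid W' \ll V\}$ is up-directed, and by \cref{lemma:kernel-monotone} so is the family $\{\ker W'\}$ whose union is $O$.

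The main step is to show that $V \subseteq \cl O$; in fact $\cl O = V$. This is where the full CL-space hypothesis enters. Since $W' \ll V$ gives $W' \subseteq V$ (by \cref{lemma:ll-iffs} and the inclusion $\ker V \subseteq V$ of \cref{lemma:kernel open upset}) and $\ker W' \subseteq W'$, we get $O \subseteq V$ and hence $\cl O \subseteq V$. For the reverse inclusion, each clopen upset $W'$ is packed because $X$ is a CL-space, so $\cl \ker W' = W'$; as $\ker W' \subseteq O$ this yields $W' \subseteq \cl O$ for every $W' \ll V$, whence $\ker V = \bigcup\{W' \mid W' \ll V\} \subseteq \cl O$. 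Since $V$ is itself packed, $V = \cl \ker V \subseteq \cl O$. The point to stress is that the argument invokes packedness not only for $V$ but for every clopen upset way below $V$; this iterated use of the kernel is precisely the Priestley-space counterpart of the interpolating directed set.

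With $V \subseteq \cl O$ in hand and $O$ an open upset, the hypothesis $U \ll V$ (\cref{def: kernel}) gives $U \subseteq O$. Now $U$ is clopen, hence compact, and $O$ is the directed union of the open sets $\ker W'$, so $U$ lies in finitely many of them; combining these via the monotonicity of $\ker$ and the closure of $\ll V$ under finite unions, I obtain a single clopen upset $W_0 \ll V$ with $U \subseteq \ker W_0$. By \cref{lemma:ll-iffs} this means $U \ll W_0$, and since $W_0 \ll V$ I may take $W = W_0$. I expect the only genuine obstacle to be the verification that $\cl O = V$, for it is there that the packedness of all the intermediate clopen upsets — rather than just $V$ — must be used; the remaining steps are routine applications of compactness and the properties of $\ker$ collected in \cref{lem: ker properties}.
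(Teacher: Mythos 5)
Your proof is correct. For this lemma the paper gives no argument of its own---it simply cites \cite[Lem.~5.3]{PultrSichler2000}---so there is no internal proof to compare against; your argument supplies the details the paper omits, and it does so in the spirit the paper intends, namely by transporting the classical interpolation argument for continuous frames along the dictionary in which joins become closures of unions (\cref{lemma:joins-in-priestley}). Each step checks out: the family $\{W' \in \clopup(X) \mid W' \ll V\}$ is closed under finite unions directly from \cref{def: kernel}, so $O$ is a directed union of open upsets (open upsets by \cref{lemma:kernel open upset}); packedness of every $W' \ll V$ together with packedness of $V$ itself gives $\cl O = V$, and this is indeed the crux---the CL-hypothesis is used for all the intermediate clopen upsets, mirroring the double use of continuity in the frame-theoretic proof; applying the definition of $U \ll V$ to the open upset $O$ yields $U \subseteq O$; and compactness of the clopen set $U$, monotonicity of $\ker$ (\cref{lemma:kernel-monotone}), and the finite-union closure of $\{W' \mid W' \ll V\}$ produce a single $W_0 \ll V$ with $U \subseteq \ker W_0$, which by \cref{lemma:ll-iffs} is precisely $U \ll W_0 \ll V$. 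The only cosmetic remark is that your directedness observation and the final compactness step could be merged, since the union $W_0$ of the finitely many $W'_i$ covering $U$ already witnesses everything; but as written the argument is complete and uses nothing beyond \cref{lem: ker properties} and \cref{def: packed}.
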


The next lemma is established in \cite[Sec.~5]{PultrSichler2000} (using different terminology).

\begin{lemma}
    \label{lemma:ll-iff-Scott-inbetween}
    Let $X$ be an L-space and $U, V\in\clopup(X)$. 
    \begin{enumerate}[ref=\thelemma(\arabic*)]
        \item If there is a Scott upset $F$ with $U \subseteq F \subseteq V$, then $U \ll V$. \label[lemma]{lemma-ll-iff-Scott-inbetween-1}
        \item If $X$ is a CL-space, then the converse of {\em (1)} also holds. \label[lemma]{lemma-ll-iff-Scott-inbetween-2}
    \end{enumerate}
\end{lemma}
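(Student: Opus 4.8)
The plan is to handle the two parts separately: part (1) is a direct application of the Scott-upset characterization, while part (2) requires an interpolation construction together with the extremal order-disconnectedness of $X$.

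For (1), I would argue straight from the definition of $\ll$. Suppose $F$ is a Scott upset with $U \subseteq F \subseteq V$, and let $W$ be an open upset with $V \subseteq \cl W$. Then $F \subseteq V \subseteq \cl W$, so the characterization of Scott upsets in \cref{lem: Scott upset} gives $F \subseteq W$. Since $U \subseteq F$, we obtain $U \subseteq W$; as $W$ was an arbitrary open upset with $V \subseteq \cl W$, this shows $U \ll V$ by \cref{def: kernel}.

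For (2), I would use interpolation to build a descending sequence of clopen upsets whose intersection is the desired Scott upset. Assuming $X$ is a CL-space and $U \ll V$, I repeatedly apply \cref{lemma:a-ll-b-implies-c-inbetween}: first obtain $W_1$ with $U \ll W_1 \ll V$, then $W_2$ with $U \ll W_2 \ll W_1$, and inductively $W_{n+1}$ with $U \ll W_{n+1} \ll W_n$. Set $F = \bigcap_n W_n$. Since each $W_n$ is a clopen upset, $F$ is a closed upset, and since $U \ll W_n$ (so $U \subseteq W_n$) for all $n$ while $W_1 \subseteq V$, we get $U \subseteq F \subseteq V$. It then remains to verify that $F$ is a Scott upset via \cref{lem: Scott upset}.

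The key step is to check that $F \subseteq \cl Z$ implies $F \subseteq Z$ for every open upset $Z$. Here I would exploit that $X$ is an L-space: since $Z$ is an open upset, $\cl Z$ is clopen, so $(\cl Z)^c$ is closed. Then $\{W_n \cap (\cl Z)^c\}_n$ is a descending family of closed sets whose intersection is $F \cap (\cl Z)^c = \varnothing$, so by compactness of $X$ we have $W_n \subseteq \cl Z$ for some $n$. Applying the defining property of $W_{n+1} \ll W_n$ to the open upset $Z$ gives $W_{n+1} \subseteq Z$, whence $F \subseteq W_{n+1} \subseteq Z$. This compactness reduction is the crux of the argument, and the main obstacle is recognizing that extremal order-disconnectedness is exactly what turns $\cl Z$ into a clopen set: this is what lets the infinite intersection $F$ collapse to a single $W_n$ and makes the $\ll$-relation fire.
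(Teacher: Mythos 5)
Your proof is correct. A direct comparison with the paper is moot here: the paper does not prove this lemma at all, but imports it from \cite[Sec.~5]{PultrSichler2000}, so your argument supplies a proof the paper leaves to the literature. Part (1) is the straightforward unfolding of \cref{lem: Scott upset} against the definition of $\ll$ in \cref{def: kernel}, and is fine. For part (2), your route---iterate \cref{lemma:a-ll-b-implies-c-inbetween} to produce a descending chain with $U \ll W_{n+1} \ll W_n$ for all $n$ and $W_1 \ll V$, set $F=\bigcap_n W_n$, and verify the Scott-upset criterion of \cref{lem: Scott upset}---is sound, and you correctly isolate where extremal order-disconnectedness enters: $\cl Z$ being open (hence clopen) is what makes each $W_n\cap(\cl Z)^c$ closed, so that compactness collapses $F\subseteq\cl Z$ to $W_n\subseteq\cl Z$ for a single $n$, at which point $W_{n+1}\ll W_n$ fires to give $F\subseteq W_{n+1}\subseteq Z$. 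It is worth noting that your construction is in effect a special case of the paper's later \cref{lemma:intersections-of-kernels-scott}: since $W_{n+1}\ll W_n$ gives $W_{n+1}\subseteq\ker W_n$ by \cref{lemma:ll-iffs}, your chain satisfies $\bigcap_n W_n=\bigcap_n\ker W_n$, and that lemma (whose proof uses exactly your compactness trick) would finish part (2) immediately. Citing it would shorten your argument, but since that lemma appears only in the following section of the paper, your self-contained version has the advantage of respecting the order of presentation; the underlying ideas are the same.
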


It is well known (see, e.g., \cite[p.~311]{Johnstone1982}) that a continuous frame is spatial. In \cite[Prop.~4.6]{PultrSichler2000} an alternate proof of this result is given in the language of Priestley spaces:

\begin{theorem} \label{thm:hp-implies-spatial}
    If $X$ is a CL-space, then $X$ is L-spatial.
\end{theorem}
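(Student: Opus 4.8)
The plan is to reduce the claim to the spatiality criterion of \cref{thm: spatial}. Since $X$ is an L-space, $L := \clopup(X)$ is a frame, and the unit $\varepsilon$ of Pultr--Sichler duality (\cref{rem: units}) is an isomorphism in \LPries that identifies $X$ with the Priestley space $X_L$. This isomorphism is simultaneously a homeomorphism and an order-isomorphism, so it carries clopen upsets to clopen upsets, Scott upsets to Scott upsets, and, because localic points are exactly the points with clopen downset (\cref{lemma:downset-y-clopen}), the localic part of $X$ to the localic part $Y_L$ of $X_L$. Thus ``$X$ is L-spatial'' is equivalent to ``$Y_L$ is dense in $X_L$'', and by the equivalence $(2)\Leftrightarrow(3)$ of \cref{thm: spatial} it suffices to verify the following: for all clopen upsets $U,V$ of $X$, if $U \not\subseteq V$ then there is a Scott upset $F$ with $F \subseteq U$ and $F \not\subseteq V$.

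To verify this, fix clopen upsets $U \not\subseteq V$. Then $U \cap V^c$ is a nonempty relatively open subset of $U$, and since $X$ is a CL-space the clopen upset $U$ is packed, so $\ker U$ is dense in $U$. Hence $\ker U$ meets $U \cap V^c$, and we may choose $x \in \ker U$ with $x \notin V$. By the definition of the kernel (\cref{def: kernel}), there is a clopen upset $W$ with $x \in W$ and $W \ll U$. As $X$ is a CL-space, the converse direction of \cref{lemma:ll-iff-Scott-inbetween}, namely \cref{lemma-ll-iff-Scott-inbetween-2}, turns $W \ll U$ into a Scott upset $F$ with $W \subseteq F \subseteq U$. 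Now $x \in W \subseteq F$ together with $x \notin V$ gives $F \not\subseteq V$, while $F \subseteq U$, which is precisely the required condition. Therefore $L$ is spatial and $X$ is L-spatial.

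I expect the verification of condition $(3)$ to be the crux, since it is here that the two defining features of a CL-space are used together: packedness of $U$ produces a kernel point $x$ lying outside $V$, and the CL-space half of \cref{lemma:ll-iff-Scott-inbetween} converts the witness $W \ll U$ of $x \in \ker U$ into a Scott upset squeezed between $W$ and $U$. Everything preceding and following this is routine transport through Pultr--Sichler duality.

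Finally, it is worth noting that one can read off density concretely rather than quoting $(2)\Leftrightarrow(3)$: the point $x \in F \setminus V$ lies above some minimal point $y$ of $F$ (\cref{lemma:properties-Priestley-3}), and $y \in Y$ because $F$ is a Scott upset; as $V$ is an upset and $x \notin V$, we get $y \notin V$, so $y \in Y \cap U \cap V^c$. Since every nonempty open set of the Stone space $X$ contains a nonempty set of the form $U \cap V^c$ with $U$ a clopen upset and $V^c$ a clopen downset, this exhibits points of $Y$ in every nonempty open set, i.e.\ shows directly that $Y$ is dense.
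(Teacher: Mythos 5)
Your proof is correct. One structural point to be aware of: the paper itself does not prove this theorem at all --- it is quoted as \cite[Prop.~4.6]{PultrSichler2000}, an external result whose proof is never reproduced. So your argument is not ``the paper's proof rederived'' but a genuinely self-contained derivation from the machinery the paper has already set up, and every step checks out: transporting along the unit $\varepsilon$ of Pultr--Sichler duality (\cref{rem: units}) to apply \cref{thm: spatial} is legitimate, since $\varepsilon$ is an order-homeomorphism and therefore preserves clopen upsets, Scott upsets, and (via \cref{lemma:downset-y-clopen}) localic parts; packedness of $U$ yields a kernel point $x\in\ker U\setminus V$ whenever $U\not\subseteq V$; the witness $W\ll U$ from \cref{def: kernel} is converted by \cref{lemma-ll-iff-Scott-inbetween-2} into a Scott upset $W\subseteq F\subseteq U$, which then violates $F\subseteq V$. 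In fact, your final paragraph is the better version of the argument: passing from $x\in F\setminus V$ down to a minimal point $y\in\min(F)\subseteq Y$ (\cref{lemma:properties-Priestley-3}), which avoids $V$ because $V$ is an upset, exhibits a localic point in the arbitrary nonempty basic open $U\cap V^c$ (\cref{lemma:properties-Priestley-0}) and so proves density of $Y$ directly. That version needs neither the duality transport nor the equivalence $(2)\Leftrightarrow(3)$ of \cref{thm: spatial}, and I would recommend promoting it from a remark to the actual proof: it is shorter, works intrinsically on the L-space $X$, and uses only \cref{def: packed}, \cref{lemma-ll-iff-Scott-inbetween-2}, and the two Priestley-space facts cited above.
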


Consequently, if $X$ is a CL-space, then $Y$ is dense in $X$. We next prove that an L-spatial $X$ is a CL-space iff $Y$ is locally compact. For this we require the following lemma related to the Hofmann-Mislove Theorem \cite{HofmannMislove1981} (see also \cite[Thm.~II-1.20]{Compendium2003}). Recall that the Hofmann-Mislove Theorem establishes a (dual) isomorphism between the poset of compact saturated sets of a sober space $Y$ and the poset of Scott-open filters of the frame of opens of $Y$. Since Scott upsets correspond to Scott-open filters (see \cite[Lem~5.1]{BezhanishviliMelzer2022}), the lemma is in fact a version of the Hofmann-Mislove Theorem in the language of Priestley spaces.

\begin{lemma}[{\cite[Thm.~5.7]{BezhanishviliMelzer2022}}] \label{thm:sfilt-ksat}
    Let $L$ be a frame, $X_L$ its Priestley space, and $Y_L \subseteq X_L$ the localic part of $X_L$. The map $F \mapsto F \cap Y_L$ is an isomorphism from the poset of Scott upsets of $X_L$ to the poset of compact saturated sets of $Y_L$ $($both ordered by inclusion$)$. The inverse isomorphism is given by $K \mapsto \upset{K}$.
    \end{lemma}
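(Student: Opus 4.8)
The plan is to exhibit the two maps as mutually inverse, order-preserving maps between the stated posets; the order isomorphism then follows formally. Throughout I use that the specialization order of $Y_L$ is the restriction of the Priestley order $\leq$ of $X_L$: both are recovered from clopen upsets via the Priestley separation axiom, so that $y_1 \le y_2$ in $Y_L$ iff every clopen upset containing $y_1$ contains $y_2$ iff $y_1 \le y_2$ in $X_L$. Consequently the saturated subsets of $Y_L$ are exactly its $\leq$-upsets.

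First I would check that $F \mapsto F \cap Y_L$ sends a Scott upset $F$ to a compact saturated subset of $Y_L$. Saturatedness is immediate, since $F$ is an upset of $X_L$. For compactness, let $\{U_i \cap Y_L\}$ be a cover of $F \cap Y_L$ by basic opens with each $U_i \in \clopup(X_L)$, and set $U = \bigcup_i U_i$, an open upset. Since $F$ is a closed upset, \cref{lemma:properties-Priestley-3} gives $F = \upset \min(F)$, and as $F$ is a Scott upset we have $\min(F) \subseteq F \cap Y_L \subseteq U$; because $U$ is an upset this forces $F \subseteq U = \bigcup_i U_i$. Now $F$ is closed in the compact space $X_L$, hence compact, so finitely many $U_i$ already cover $F$, and intersecting with $Y_L$ yields a finite subcover of $F \cap Y_L$.

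Next I would check that $K \mapsto \upset K$ sends a compact saturated $K \subseteq Y_L$ to a Scott upset of $X_L$. The crucial point is that $\upset K$ is closed, for which I claim $\upset K = \bigcap\{V \in \clopup(X_L) \mid K \subseteq V\}$. The inclusion $\subseteq$ is clear, since each such $V$ is an upset containing $K$. For $\supseteq$, if $x \notin \upset K$ then $k \not\leq x$ for every $k \in K$, so by Priestley separation each $k$ lies in a clopen upset $U_k$ omitting $x$; the sets $U_k \cap Y_L$ cover $K$, so by compactness of $K$ finitely many do, and their union is a clopen upset $V \supseteq K$ with $x \notin V$. Thus $\upset K$ is an intersection of clopen sets, hence a closed upset. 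To see it is a Scott upset I would invoke \cref{lem: Scott upset}: if $U$ is an open upset with $\upset K \subseteq \cl U$, then $K \subseteq \cl U \cap Y_L = U \cap Y_L \subseteq U$ by \cref{cor:y-pulls-closure-1}, and since $U$ is an upset, $\upset K \subseteq U$.

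It remains to see the two maps are mutually inverse, after which monotonicity of each (both clearly preserve inclusions) yields that $F \mapsto F \cap Y_L$ is an order isomorphism with inverse $K \mapsto \upset K$. For a Scott upset $F$, the containment $F \cap Y_L \subseteq F$ gives $\upset(F \cap Y_L) \subseteq F$, while $\min(F) \subseteq F \cap Y_L$ together with $F = \upset\min(F)$ gives the reverse, so $\upset(F \cap Y_L) = F$. For a compact saturated $K$, clearly $K \subseteq \upset K \cap Y_L$; conversely if $y \in \upset K \cap Y_L$ then $k \leq y$ for some $k \in K$, and since $K$ is a $\leq$-upset of $Y_L$ we get $y \in K$. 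I expect the main obstacle to be the closedness of $\upset K$ in the second step: this is where compactness of $K$ in the (clopen-upset) topology of $Y_L$ must be combined with Priestley separation, and where one must take care that all closures and covers are interpreted in the correct topology.
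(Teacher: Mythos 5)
Your proof is correct, but it takes a genuinely different route from the paper: the paper does not prove this lemma at all, citing it as Thm.~5.7 of \cite{BezhanishviliMelzer2022}, where it is obtained by composing the correspondence between Scott upsets and Scott-open filters with the Hofmann--Mislove theorem \cite{HofmannMislove1981} (this is exactly how the paper frames the result in the surrounding discussion). You instead verify everything directly inside the Priestley space, and the verification holds up: the identification of the specialization order of $Y_L$ with the restricted Priestley order is correctly justified via the separation axiom; compactness of $F \cap Y_L$ follows from $\min(F) \subseteq Y_L$, $F = \upset\min(F)$ (\cref{lemma:properties-Priestley-3}), and compactness of $X_L$; closedness of $\upset K$ follows from Priestley separation plus compactness of $K$; the Scott-upset property of $\upset K$ follows from \cref{lem: Scott upset} and \cref{cor:y-pulls-closure-1}; and the two maps are mutually inverse monotone bijections. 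Neither of the in-paper lemmas you invoke depends on the statement being proved, so there is no circularity. What the two approaches buy: the citation route identifies the lemma conceptually as Hofmann--Mislove in disguise, while your route is elementary and self-contained, in effect re-proving the Hofmann--Mislove correspondence in the Priestley language --- the deep role played by sobriety in the classical proof is absorbed by compactness of the Priestley space together with the condition $\min(F) \subseteq Y_L$. One cosmetic remark: by \cref{def: localic part} every open of $Y_L$ is already of the form $V \cap Y_L$ with $V \in \clopup(X_L)$, so your reduction to ``basic'' opens is not even needed, though it is harmless.
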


\begin{theorem} \label{thm:hp-iff-lc}
Let $L$ be a spatial frame, $X_L$ its Priestley space, and $Y_L\subseteq X_L$ the localic part of $X_L$. Then $X_L$ is a CL-space iff $Y_L$ is locally compact.
\end{theorem}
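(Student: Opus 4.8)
The plan is to pass through the frame-theoretic reformulation that $X_L$ being a CL-space is the same as $L$ being continuous (\cref{thm:hp-iff-continuous}), and to transport the way-below relation between clopen upsets down to the localic part $Y_L = pt(L)$ using the Scott-upset description of $\ll$ (\cref{lemma:ll-iff-Scott-inbetween}) together with the Hofmann--Mislove correspondence (\cref{thm:sfilt-ksat}). Throughout I will use that, since $L$ is spatial, $Y_L$ is dense in $X_L$ and $\cl\zeta(a)=\varphi(a)$ for every $a\in L$ (\cref{remark:spatial}), and that the open subsets of $Y_L$ are exactly the sets $\zeta(a)=\varphi(a)\cap Y_L$ with $a\in L$ (\cref{lemma:zeta-is-phi-restricted}).

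For the forward implication, suppose $X_L$ is a CL-space, so $L$ is continuous. Given an open $\tilde U \subseteq Y_L$ and $y\in\tilde U$, write $\tilde U=\zeta(b)$. Then $b\in y$ and $b=\bigvee\{a\mid a\ll b\}$; since $y$ is a completely prime filter, some $a\ll b$ lies in $y$, that is, $y\in\zeta(a)$. By \cref{lemma:ll-iffs-5} we have $\varphi(a)\ll\varphi(b)$, so \cref{lemma-ll-iff-Scott-inbetween-2} (which needs the CL-space hypothesis) yields a Scott upset $F$ with $\varphi(a)\subseteq F\subseteq\varphi(b)$. By \cref{thm:sfilt-ksat}, $K:=F\cap Y_L$ is compact saturated in $Y_L$, and intersecting the inclusions with $Y_L$ gives $y\in\zeta(a)\subseteq K\subseteq\zeta(b)=\tilde U$ with $\zeta(a)$ open and $K$ compact. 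Hence $Y_L$ is locally compact.

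For the converse, assume $Y_L$ is locally compact; by \cref{thm:hp-iff-continuous} it suffices to show that $L$ is continuous. Fix $b\in L$ and $y\in\zeta(b)$. Local compactness produces an open $V$ and a compact $C$ with $y\in V\subseteq C\subseteq\zeta(b)$; replacing $C$ by its saturation in $Y_L$ (still compact, and contained in the saturated set $\zeta(b)$) I may assume $C$ is compact saturated. Writing $V=\zeta(a)$ and setting $F=\upset C$ in $X_L$, \cref{thm:sfilt-ksat} makes $F$ a Scott upset with $F\cap Y_L=C$. Taking closures in $\zeta(a)\subseteq C$ and using $\cl\zeta(a)=\varphi(a)$ gives $\varphi(a)\subseteq F$, while $C\subseteq\zeta(b)\subseteq\varphi(b)$ and the fact that $\varphi(b)$ is an upset give $F=\upset C\subseteq\varphi(b)$. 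Thus $\varphi(a)\subseteq F\subseteq\varphi(b)$, so $\varphi(a)\ll\varphi(b)$ by \cref{lemma-ll-iff-Scott-inbetween-1}, whence $a\ll b$ by \cref{lemma:ll-iffs-5}, and $a\in y$ since $y\in\zeta(a)$. Therefore every completely prime filter containing $b$ contains some $a\ll b$; as $L$ is spatial, this forces $b=\bigvee\{a\mid a\ll b\}$, and so $L$ is continuous.

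The main obstacle is the bookkeeping in the converse: local compactness only hands us a compact set and an open set of $Y_L$, and I must (i) saturate the compact set so that Hofmann--Mislove (\cref{thm:sfilt-ksat}) applies, and (ii) lift the inclusions $\zeta(a)\subseteq C\subseteq\zeta(b)$ from $Y_L$ back to the sandwich $\varphi(a)\subseteq F\subseteq\varphi(b)$ in $X_L$, which is exactly where spatiality (density of $Y_L$ and $\cl\zeta(a)=\varphi(a)$) is used. The genuinely new ingredient beyond the order-theoretic lemmas is \cref{thm:sfilt-ksat}, the Priestley-space form of the Hofmann--Mislove theorem, which supplies the dictionary between Scott upsets of $X_L$ and compact saturated sets of $Y_L$.
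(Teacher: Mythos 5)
Your proof is correct, and its skeleton is the same as the paper's: both directions come down to the Priestley-space Hofmann--Mislove correspondence (\cref{thm:sfilt-ksat}) together with the Scott-upset sandwich characterization of $\ll$ (\cref{lemma:ll-iff-Scott-inbetween}) and the translation $a \ll b$ iff $\varphi(a)\ll\varphi(b)$ (\cref{lemma:ll-iffs-5}). The difference is where the bookkeeping happens: you run both directions on the frame side, reducing via \cref{thm:hp-iff-continuous} to continuity of $L$, and you close the converse by using spatiality as a separation property---every completely prime filter containing $b$ contains some $a\ll b$, which forces $b=\bigvee\{a\mid a\ll b\}$. The paper instead verifies the CL-condition directly in $X_L$: its converse shows that $\ker(a)$ is dense in $\varphi(a)$ by taking a basic clopen neighborhood $U\cap V^c$ meeting $\varphi(a)$, using density of $Y_L$ to find a localic point $y\in U\cap V^c\cap\zeta(a)$, and then running the same sandwich argument at $y$ to get $y\in\ker(a)$. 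Your route buys a cleaner converse, free of the topological density computation with basic clopens; the paper's version keeps the entire argument inside the Priestley space (consistent with its overall program) and makes explicit where density of kernels comes from, without needing \cref{thm:hp-iff-continuous} as a bridge. One point where you are in fact more careful than the paper: you explicitly replace the compact set produced by local compactness with its saturation (again compact, and still contained in the saturated set $\zeta(b)$) before applying \cref{thm:sfilt-ksat}, a step the paper passes over tacitly when it asserts that local compactness yields a compact \emph{saturated} set.
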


\begin{proof}
 First suppose that $X_L$ is a CL-space, $y\in Y_L$, and $\zeta(a)$ is an open neighborhood of $y$. Since $\zeta(a) = \varphi(a) \cap Y_L$ (see \cref{lemma:zeta-is-phi-restricted}), we have 
\[y \in \varphi(a) = \cl \ker(a) = \cl \bigcup\{\varphi(b) \mid \varphi(b) \ll \varphi(a)\}.\] Because $\downset y$ is open, $y \in \varphi(b)$ for some $\varphi(b) \ll \varphi(a)$. 
Therefore, $y \in \varphi(b) \cap Y_L = \zeta(b)$. 
By Lemma~\ref{lemma-ll-iff-Scott-inbetween-2}, there is a Scott upset $F$ such that $\varphi(b)\subseteq F\subseteq\varphi(a)$. Thus, \[y\in\zeta(b)\subseteq F\cap Y_L\subseteq\zeta(a).\] By \cref{thm:sfilt-ksat}, $F \cap Y_L$ is compact. Consequently, $Y_L$ is locally compact.
    
    Conversely, suppose that $Y_L$ is locally compact and $a\in L$. We must show that $\ker(a)$ is dense in $\varphi(a)$. Let $x \in \varphi(a)$ and $W$ be an open neighborhood of $x$ in $X_L$. By \cref{lemma:properties-Priestley-0}, there exist clopen upsets $U$ and $V$ of $X_L$ such that $x\in U\cap V^c \subseteq W$. Therefore we have $U\cap V^c\cap\varphi(a)\ne\varnothing$. Because $L$ is spatial, $Y_L$ is dense in $X_L$, so $U \cap V^c \cap \zeta(a) \neq \varnothing$, and hence there is $y \in U\cap V^c\cap \zeta(a)$. Since $Y_L$ is locally compact, there is $b\in L$ and a compact saturated $K \subseteq Y_L$ such that $y\in\zeta(b) \subseteq K \subseteq \zeta(a)$. 
By \cref{thm:sfilt-ksat}, $\upset K$ is a Scott upset. Thus, $\upset K$ is closed, and so $\varphi(b) = \cl \zeta(b) \subseteq \upset K$ by \cref{remark:spatial} (which is applicable since $X_L$ is L-spatial by \cref{thm: spatial}).
 Therefore, $\varphi(b) \subseteq \upset K \subseteq \varphi(a)$. Then $\varphi(b) \ll \varphi(a)$ by Lemma~\ref{lemma-ll-iff-Scott-inbetween-1}. Thus, $y \in \ker(a)$ by Lemma~\ref{lemma:ll-iffs}. This implies that $U\cap V^c\cap \ker(a) \neq \varnothing$, so $\ker(a)$ is dense in $\varphi(a)$. 
\end{proof}

\cref{thm:hp-iff-continuous,thm:hp-iff-lc} establish a one-to-one correspondence between continuous frames, CL-spaces, and locally compact sober spaces. Next, we extend these to categorical equivalences.

\begin{lemma} \label{lemma:proper-iff-priestley}
    Let $h : L_1 \to L_2$ be a frame homomorphism and $f : X_{L_2} \to X_{L_1}$ its dual L-morphism. 
    Then $h$ is proper iff \begin{equation*}
    f^{-1}(\ker U) \subseteq \ker f^{-1}(U) \tag{$\sharp$} \label{eq:properness}
\end{equation*}
for all $U\in\clopup(X_{L_1})$.
\end{lemma}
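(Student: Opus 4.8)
The plan is to push both sides of the claimed equivalence through the isomorphism $\varphi$ and reduce everything to the way-below relation, where the two conditions will match on the nose. The engine of the whole argument is the naturality identity coming from Priestley duality: since $f = \functor X(h) = h^{-1}$, for every $a\in L_1$ we have
\[
f^{-1}(\varphi(a)) = \{x\in X_{L_2}\mid h(a)\in x\} = \varphi(h(a)).
\]
First I would record this identity, together with the fact that $\varphi\colon L_1\to\clopup(X_{L_1})$ is an isomorphism, so that every clopen upset of $X_{L_1}$ has the form $\varphi(a)$ for a unique $a\in L_1$.

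Next I would rewrite the kernel in terms of $\ll$. Combining the definition of $\ker$ with \cref{lemma:ll-iffs-5}, the clopen upsets $V$ with $V\ll\varphi(a)$ are exactly those of the form $\varphi(c)$ with $c\ll a$, so
\[
\ker\varphi(a)=\bigcup\{\varphi(c)\mid c\in L_1,\ c\ll a\}.
\]
Since preimages commute with unions and $f^{-1}\varphi(c)=\varphi(h(c))$, this gives $f^{-1}(\ker\varphi(a))=\bigcup\{\varphi(h(c))\mid c\ll a\}$, whereas $\ker f^{-1}(\varphi(a))=\ker\varphi(h(a))$ by the naturality identity. With these two formulas in hand, both directions become one-line comparisons.

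For the forward direction, assume $h$ is proper and take any clopen upset $U=\varphi(a)$. For each $c\ll a$ properness gives $h(c)\ll h(a)$, hence $\varphi(h(c))\subseteq\ker\varphi(h(a))$ by \cref{lemma:ll-iffs-5}; taking the union over all such $c$ yields $f^{-1}(\ker U)\subseteq\ker f^{-1}(U)$, which is \eqref{eq:properness}. For the converse, assume \eqref{eq:properness} and let $a\ll b$ in $L_1$. Then $\varphi(a)\subseteq\ker\varphi(b)$ by \cref{lemma:ll-iffs-5}, and applying $f^{-1}$ followed by \eqref{eq:properness} with $U=\varphi(b)$ gives
\[
\varphi(h(a))=f^{-1}(\varphi(a))\subseteq f^{-1}(\ker\varphi(b))\subseteq\ker f^{-1}(\varphi(b))=\ker\varphi(h(b)),
\]
so $h(a)\ll h(b)$ by \cref{lemma:ll-iffs-5} again; thus $h$ is proper.

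I do not expect a genuine obstacle: once the kernel is expressed through $\ll$, the proof is a direct computation. The only thing to handle with care is the bookkeeping between the two copies of $\varphi$, on $L_1$ and on $L_2$, and applying the identity $f^{-1}\varphi=\varphi h$ on the correct side, since this is precisely what converts the per-frame characterization of \cref{lemma:ll-iffs-5} into a statement relating the two Priestley spaces.
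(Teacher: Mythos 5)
Your proposal is correct and follows essentially the same route as the paper's proof: both directions reduce to \cref{lemma:ll-iffs-5} together with the naturality identity $f^{-1}(\varphi(a)) = \varphi(h(a))$, and your union formula $\ker\varphi(a)=\bigcup\{\varphi(c)\mid c\ll a\}$ is just a set-level repackaging of the paper's element-wise argument (pick $x\in f^{-1}(\ker U)$, find $V=\varphi(c)$ with $f(x)\in V\ll U$). The converse direction is step-for-step the same as in the paper.
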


\begin{proof}
    First suppose that $h$ is proper and $U\in\clopup(X_{L_1})$. Let $x \in f^{-1}(\ker U)$. Then $f(x) \in \ker U$. Therefore, there exists $V\in\clopup(X_{L_1})$ with $f(x) \in V \ll U$. Since $U,V \in \clopup(X_{L_1})$, there exist $a,b \in L_1$ with $\varphi(a) = V$ and $\varphi(b) = U$. Then $a \ll b$ by \cref{lemma:ll-iffs-5}. Since $h$ is proper, $ha \ll hb$, and hence using \cref{lemma:ll-iffs-5} again, $f^{-1}(V) = \varphi(ha) \ll \varphi(hb) = f^{-1}(U)$.
    Thus, $x \in f^{-1}(V) \ll f^{-1}(U)$, and so $x \in \ker f^{-1}(U)$. 
    
    Conversely, suppose (\ref{eq:properness}) holds for all $U\in\clopup(X_{L_1})$. Let $a \ll b$. Then $\varphi(a) \subseteq \ker(b)$ by Lemma~\ref{lemma:ll-iffs-5}. Therefore, $f^{-1}(\varphi(a)) \subseteq f^{-1}(\ker(b))$. Thus, $f^{-1}(\varphi(a)) \subseteq \ker f^{-1}(\varphi(b))$ by (\ref{eq:properness}). Consequently, $f^{-1}(\varphi(a)) \ll f^{-1}(\varphi(b))$ by \cref{lemma:ll-iffs}. Hence, $\varphi(ha) \ll \varphi(hb)$, and so $ha \ll hb$ by \cref{lemma:ll-iffs-5}, yielding that $h$ is proper. 
\end{proof}

\begin{definition} \label{def: proper L-morphism}
    Let $f : X_1 \to X_2$ be an L-morphism between L-spaces. We call $f$ \emph{proper} if $f$ satisfies (\ref{eq:properness}) for all clopen upsets of $X_2$.
\end{definition}

It is straightforward to check that CL-spaces and proper L-morphisms form a category, which we denote 
by {\CL}.

\begin{theorem} \label{thm:hl-frames}
    {\CFrm} is dually equivalent to {\CL}.
\end{theorem}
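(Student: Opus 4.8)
The plan is to establish the dual equivalence $\CFrm \simeq \CL$ by showing that the Pultr--Sichler duality $\functor X, \functor D$ (see \cref{thm: PS}) restricts appropriately to these subcategories. Since \CFrm is a non-full subcategory of \Frm and \CL is the corresponding non-full subcategory of \LPries, the work has two parts: checking that objects and morphisms correspond, and checking that the restricted functors are well defined on morphisms. First I would verify the object correspondence: by \cref{thm:hp-iff-continuous}, a frame $L$ is continuous iff its Priestley space $X_L$ is a CL-space, so $\functor X$ carries continuous frames to CL-spaces and, using the natural isomorphism $\varphi : L \to \functor D \functor X L$ together with \cref{thm:hp-iff-continuous} read in reverse, $\functor D$ carries CL-spaces to continuous frames. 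This gives essential surjectivity on both sides once full faithfulness on morphisms is handled.

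Next I would handle the morphism correspondence, which is the crux. Given a frame homomorphism $h : L_1 \to L_2$ with dual L-morphism $f = \functor X(h) : X_{L_2} \to X_{L_1}$, \cref{lemma:proper-iff-priestley} says precisely that $h$ is proper iff $f$ satisfies condition (\ref{eq:properness}) for all clopen upsets of $X_{L_1}$, i.e.\ iff $f$ is a proper L-morphism in the sense of \cref{def: proper L-morphism}. So $\functor X$ sends proper frame homomorphisms to proper L-morphisms and, conversely, any proper L-morphism arises as $\functor X(h)$ for a (necessarily proper) frame homomorphism $h$. This is the decisive step, and \cref{lemma:proper-iff-priestley} does essentially all the heavy lifting. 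The one thing to confirm is that \CL, as already noted to be a category, really is closed under the relevant compositions and identities, but that is routine and was flagged as straightforward just before the statement.

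Finally I would assemble these into the equivalence. Restricting $\functor X$ and $\functor D$ to \CFrm and \CL yields contravariant functors between these categories, and the natural isomorphisms $\varphi$ and $\varepsilon$ from Pultr--Sichler duality (see \cref{rem: units}) restrict to natural isomorphisms here, since their components are unchanged and their naturality squares are inherited from the ambient duality. Thus $\functor X$ and $\functor D$ form a dual equivalence between \CFrm and \CL. The main obstacle is entirely concentrated in the morphism translation, and it is already resolved by \cref{lemma:proper-iff-priestley}; I expect the remainder to be a clean bookkeeping argument showing that the ambient Pultr--Sichler duality restricts, so the proof should be short, citing \cref{thm: PS}, \cref{thm:hp-iff-continuous}, and \cref{lemma:proper-iff-priestley}, together with the definition of \CL, and invoking the fact that a fully faithful, essentially surjective functor is an equivalence (see, e.g., \cite[p.~93]{MacLane1998}).
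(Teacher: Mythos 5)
Your proposal is correct and follows essentially the same route as the paper: restricting the Pultr--Sichler duality of \cref{thm: PS}, using \cref{thm:hp-iff-continuous} for the object correspondence and \cref{lemma:proper-iff-priestley} for the morphism correspondence, with the units $\varphi$ and $\varepsilon$ remaining (natural) isomorphisms in the restricted categories. The paper's proof is exactly this argument, stated more compactly.
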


\begin{proof}
    The units $\varphi : L \to \functor D\functor X(L)$ and $\varepsilon : X \to \functor X\functor D(X)$ of Pultr-Sichler duality (see \cref{rem: units}) remain isomorphisms in \CFrm and \CL. 
    Thus, it follows from \cref{thm:hp-iff-continuous,lemma:proper-iff-priestley} that the restrictions of the functors $\functor X$ and $\functor D$ yield a dual equivalence between {\CFrm} and {\CL}.
\end{proof}

We next give several equivalent conditions for an L-morphism between CL-spaces to be proper. For this we require the following lemma, item (1) of which generalizes \cite[Lem.~4.5]{PultrSichler2000} and provides means to find Scott upsets. 

\begin{lemma} 
    Let $X$ be a CL-space.
    \begin{enumerate}[ref=\thelemma(\arabic*)]
    \item If $\mathcal U \subseteq \clopup(X)$  is a down-directed family  such that $\bigcap \mathcal U = \bigcap\{\ker U \mid U \in \mathcal U\}$, then $\bigcap \mathcal U$ is a Scott upset.\label[lemma]{lemma:intersections-of-kernels-scott}
    \item $\ker U = \upset (U \cap Y)$ for every $U\in\clopup(X)$.\label[lemma]{lemma:hp-kernels}
\end{enumerate} 
\end{lemma}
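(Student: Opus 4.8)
The two parts are logically independent, so I would prove them separately, and each reduces quickly to machinery already established.

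\textbf{Part (1).} Set $F = \bigcap \mathcal U$. Since each member of $\mathcal U$ is a clopen upset, $F$ is a closed upset, so by \cref{lem: Scott upset} it suffices to show that $F \subseteq \cl W$ implies $F \subseteq W$ for every open upset $W$ of $X$. The plan is to use compactness to collapse the down-directed family to a single member. Fixing such a $W$, note that since $X$ is an L-space (extremally order-disconnected), $\cl W$ is clopen, so each $U \cap (\cl W)^c$ with $U \in \mathcal U$ is closed, and their intersection is $F \cap (\cl W)^c = \varnothing$. By compactness together with down-directedness there is a single $U_0 \in \mathcal U$ with $U_0 \subseteq \cl W$. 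Then \cref{lemma:kernel-adjoint-of-closure} gives $\ker U_0 \subseteq W$, and invoking the hypothesis $F = \bigcap\{\ker U \mid U \in \mathcal U\}$ we conclude $F \subseteq \ker U_0 \subseteq W$. This is exactly the criterion of \cref{lem: Scott upset}, so $F$ is a Scott upset.

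\textbf{Part (2).} For the inclusion $\upset(U \cap Y) \subseteq \ker U$ I would first use that $X$ is a CL-space, so $\ker U$ is dense in $U$; since $\ker U \subseteq U$ with $U$ closed, this forces $\cl \ker U = U$. As $\ker U$ is an open upset by \cref{lemma:kernel open upset}, \cref{cor:y-pulls-closure-1} yields $\ker U \cap Y = \cl(\ker U) \cap Y = U \cap Y$, and because $\ker U$ is an upset we obtain $\upset(U \cap Y) = \upset(\ker U \cap Y) \subseteq \ker U$. For the reverse inclusion, take $x \in \ker U$; by \cref{lemma:ll-iffs} there is $V \in \clopup(X)$ with $x \in V$ and $V \ll U$. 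The key step is \cref{lemma-ll-iff-Scott-inbetween-2}: in a CL-space, $V \ll U$ produces a Scott upset $F$ with $V \subseteq F \subseteq U$. Since $F$ is a closed upset, $F = \upset \min F$ by \cref{lemma:properties-Priestley-3}, and since $F$ is a Scott upset, $\min F \subseteq Y$; combined with $\min F \subseteq U$ this gives $\min F \subseteq U \cap Y$, whence $x \in V \subseteq F = \upset \min F \subseteq \upset(U \cap Y)$. The two inclusions give $\ker U = \upset(U \cap Y)$.

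\textbf{Main obstacle.} I expect both parts to go through cleanly given the tools already available; the only delicate points are justifying the compactness reduction in (1), which hinges on $\cl W$ being clopen (extremal order-disconnectedness) so that $U \cap (\cl W)^c$ is genuinely closed, and correctly deploying the CL-hypothesis in (2) in two distinct roles, namely density of $\ker U$ for the inclusion $\upset(U\cap Y)\subseteq \ker U$ and the existence of an interpolating Scott upset for the reverse inclusion.
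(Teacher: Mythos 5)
Your proposal is correct and follows essentially the same route as the paper's proof: part (1) is the identical compactness/down-directedness argument (the paper likewise relies on $\cl W$ being open by extremal order-disconnectedness), and in part (2) the inclusion $\ker U \subseteq \upset(U \cap Y)$ uses the same interpolating Scott upset from \cref{lemma-ll-iff-Scott-inbetween-2} together with $\min F \subseteq Y$. The only minor variations are harmless: for $\upset(U\cap Y)\subseteq \ker U$ you derive $\ker U \cap Y = U \cap Y$ from \cref{cor:y-pulls-closure-1} and take upsets, whereas the paper applies \cref{lem: Scott upset} to the Scott upset $\upset y$ with $y \in U \cap Y$; and producing $V \in \clopup(X)$ with $x \in V \ll U$ is immediate from the definition of $\ker U$ rather than from \cref{lemma:ll-iffs}.
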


\begin{proof}
(1) Clearly $\bigcap \mathcal U$ is a closed upset. To see that it is a Scott upset, by Lemma~\ref{lem: Scott upset} it is enough to show that $\bigcap \mathcal U \subseteq \cl V$ implies $\bigcap \mathcal U \subseteq V$ for every open upset $V$ of $X$. 
    Note that $\cl V$ is open since $X$ is an L-space. Therefore, since $X$ is compact and $\bigcap \mathcal U$ is down-directed, from $\bigcap \mathcal U \subseteq \cl V$ it follows that there is $U \in \mathcal U$ with $U \subseteq \cl V$. Thus, $\ker U \subseteq V$ by \cref{lemma:kernel-adjoint-of-closure}. Since $U \in \mathcal U$ and $\bigcap \mathcal U = \bigcap \{\ker U \mid U \in \mathcal U\}$, we have $\bigcap \mathcal U \subseteq \ker U$. 
    Consequently, $\bigcap \mathcal U  \subseteq V$.

(2) First suppose that $x \in \ker U$. Then there is $V\in\clopup(X)$ with $x \in V \ll U$. By \cref{lemma-ll-iff-Scott-inbetween-2}, there is a Scott upset $F$ with $V \subseteq F \subseteq U$. Therefore, there is $y \in F \cap Y$ with $y \leq x$. Thus, $x \in \upset(U \cap Y)$.

Conversely, suppose that $x \in \upset(U \cap Y)$. Then there is $y \in U \cap Y$ with $y \leq x$. Since $U$ is packed, $U=\cl\ker U$, so $\upset y \subseteq \cl\ker U$. Thus, since $\upset y$ is a Scott upset and $\ker U$ is an open upset, $x \in \upset y \subseteq \ker U$ by Lemma~\ref{lem: Scott upset}. 
\end{proof}

\begin{theorem} \label{thm: equiv conditions}
    Let $X_1$ and $X_2$ be CL-spaces, $Y_1$ and $Y_2$ the localic parts of $X_1$ and $X_2$ respectively, and $f : X_1 \to X_2$ an L-morphism. The following are equivalent.
    \begin{enumerate}[ref=\thetheorem(\arabic*)]
        \item $f$ is proper.
        \item $f^{-1}\upset(U \cap Y_2) = \upset(f^{-1}(U) \cap Y_1)$ for all $U\in\clopup(X_2)$.
        \item $f^{-1}(\upset y)$ is a Scott upset of $X_1$ for all $y \in Y_2$. \label[theorem]{thm: equiv conditions-3}
        \item $f^{-1}(F)$ is a Scott upset of $X_1$ for all Scott upsets $F$ of $X_2$. \label[theorem]{thm: equiv conditions-4}
        \item $\downset f(x) \cap Y_2\subseteq \downset f(\downset x \cap Y_1)$ for all $x \in X_1$.
    \end{enumerate}
\end{theorem}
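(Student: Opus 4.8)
The plan is to prove the theorem by establishing a cycle of implications, using the kernel characterization from \cref{lemma:hp-kernels} as the crucial bridge. Since $X_1$ and $X_2$ are CL-spaces, \cref{lemma:hp-kernels} gives $\ker U = \upset(U \cap Y)$ for every clopen upset $U$. This is what connects properness, defined in terms of $\ker$, to the other conditions phrased in terms of localic parts and Scott upsets. I would prove $(1)\Leftrightarrow(2)$, then $(2)\Rightarrow(4)\Rightarrow(3)$, then $(3)\Rightarrow(1)$, and finally handle $(5)$ by showing $(3)\Leftrightarrow(5)$ or inserting it into the cycle.

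For $(1)\Leftrightarrow(2)$: properness means $f^{-1}(\ker U)\subseteq\ker f^{-1}(U)$ for all $U\in\clopup(X_2)$. Rewriting both kernels via \cref{lemma:hp-kernels}, this becomes $f^{-1}\upset(U\cap Y_2)\subseteq\upset(f^{-1}(U)\cap Y_1)$. The reverse inclusion $\upset(f^{-1}(U)\cap Y_1)\subseteq f^{-1}\upset(U\cap Y_2)$ should hold automatically for any L-morphism: if $y\in f^{-1}(U)\cap Y_1$, then by \cref{lem: f restricts} $f(y)\in Y_2$, and $y\in f^{-1}(U)$ gives $f(y)\in U\cap Y_2$, so $f(\upset y)\subseteq\upset f(y)\subseteq\upset(U\cap Y_2)$. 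Hence properness is equivalent to the equality in $(2)$. For $(2)\Rightarrow(4)$: a Scott upset $F$ satisfies $F=\upset\min(F)$ with $\min(F)\subseteq Y_2$; I would use \cref{thm:sfilt-ksat} to view $F$ via its compact saturated trace $F\cap Y_2$, express $F$ as built from clopen upsets using \cref{lemma:properties-Priestley-2}, and transport the structure through $(2)$ to show $f^{-1}(F)$ has its minimal elements in $Y_1$, invoking \cref{lemma:intersections-of-kernels-scott} to verify the Scott condition. The implication $(4)\Rightarrow(3)$ is immediate since each $\upset y$ with $y\in Y_2$ is a Scott upset.

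For $(3)\Rightarrow(1)$: given $U\in\clopup(X_2)$, I want $f^{-1}\upset(U\cap Y_2)\subseteq\upset(f^{-1}(U)\cap Y_1)$. Take $x\in f^{-1}\upset(U\cap Y_2)$, so $f(x)\in\upset y$ for some $y\in U\cap Y_2$; then $x\in f^{-1}(\upset y)$, which by $(3)$ is a Scott upset, so $\min f^{-1}(\upset y)\subseteq Y_1$, yielding a localic point $w\leq x$ with $f(w)\in\upset y\subseteq U$, whence $w\in f^{-1}(U)\cap Y_1$ and $x\in\upset(f^{-1}(U)\cap Y_1)$. Finally, for $(5)$, I would show $(3)\Leftrightarrow(5)$ by unwinding $f^{-1}(\upset y)$ being a Scott upset: its minimal elements lie in $Y_1$ exactly when every $x$ above $y$ (in the sense $f(x)\geq y$) dominates some localic point mapping above $y$, which after taking complements and using that saturated/downset operations are controlled by \cref{lem:esakia-int-downsets} translates into the stated downset inclusion. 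The main obstacle I expect is $(2)\Rightarrow(4)$: transferring the Scott-upset property (a condition on minimal elements living in the localic part together with closedness) across $f^{-1}$ requires carefully combining \cref{thm:sfilt-ksat}, the density of $Y$ in a CL-space from \cref{thm:hp-implies-spatial}, and the interpolation afforded by \cref{lemma:a-ll-b-implies-c-inbetween}, rather than a direct set-theoretic manipulation.
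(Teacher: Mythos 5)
Your proposal is correct, and it rests on the same two pillars as the paper's proof---\cref{lemma:hp-kernels} (which turns properness into statement (2)) and \cref{lemma:intersections-of-kernels-scott}---but the implication graph is arranged differently: the paper proves the single chain (1)$\Rightarrow$(2)$\Rightarrow$(3)$\Rightarrow$(4)$\Rightarrow$(5)$\Rightarrow$(1), while you prove (1)$\Leftrightarrow$(2), (2)$\Rightarrow$(4)$\Rightarrow$(3), (3)$\Rightarrow$(1), and (3)$\Leftrightarrow$(5). The genuine divergence is your (2)$\Rightarrow$(4): the paper runs the kernel argument only for principal upsets $\upset y$ with $y \in Y_2$ (its (2)$\Rightarrow$(3)) and then gets arbitrary Scott upsets cheaply from $F = \bigcup\{\upset y \mid y \in \min F\}$ together with $\min\bigcup \subseteq \bigcup\min$. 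Your direct route works, but the step you flag as the main obstacle is discharged not by interpolation (\cref{lemma:a-ll-b-implies-c-inbetween} plays no role here) but by the fact that a Scott upset $F$ of a CL-space satisfies $F = \bigcap\{\ker U \mid F \subseteq U \in \clopup(X_2)\}$; this is item (1) of \cref{lemma:kernel-stable-Scott}, or follows in one line from \cref{thm:sfilt-ksat} and \cref{lemma:hp-kernels}, since $F = \upset(F \cap Y_2) \subseteq \upset(U \cap Y_2) = \ker U$ whenever $F \subseteq U$. Granting that, (2) gives $f^{-1}(F) = \bigcap\{f^{-1}(U) \mid F \subseteq U\} = \bigcap\{\ker f^{-1}(U) \mid F \subseteq U\}$ over a down-directed family, and \cref{lemma:intersections-of-kernels-scott} finishes, exactly as you intend. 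Your other arrows are sound: the automatic inclusion in (1)$\Leftrightarrow$(2) is verified the same way inside the paper's (1)$\Rightarrow$(2); (4)$\Rightarrow$(3) and (3)$\Rightarrow$(1) are as you say; and for (5)$\Rightarrow$(3) you need neither complements nor \cref{lem:esakia-int-downsets}---if $x \in \min f^{-1}(\upset y)$, then $y \in \downset f(x) \cap Y_2$, so (5) yields $y' \in \downset x \cap Y_1$ with $y \le f(y')$, and minimality forces $x = y' \in Y_1$ (this is the paper's (4)$\Rightarrow$(5) argument read backwards). The trade-off: the paper's ordering keeps every step elementary by treating only the upsets $\upset y$; yours handles all Scott upsets uniformly and detaches (5) as a self-contained equivalence with (3), at the price of invoking one extra structural lemma about Scott upsets in CL-spaces.
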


\begin{proof}
(1)$\Rightarrow$(2) Suppose $x \in f^{-1}\upset(U \cap Y_2)$. Then $x \in f^{-1}(\ker U)$ by \cref{lemma:hp-kernels}. Since $f$ is proper, $x \in \ker f^{-1}(U)$, and using \cref{lemma:hp-kernels} again yields $x \in \upset(f^{-1}(U) \cap Y_1)$; for the converse, suppose $x \in \upset(f^{-1}(U) \cap Y_1)$. Then $x \geq y$ for some $y \in f^{-1}(U) \cap Y_1$. Therefore, $f(x) \geq f(y)$ and $f(y) \in U$. By Lemma~\ref{lem: f restricts}, $f(Y_1) \subseteq Y_2$. Thus, $f(y) \in U \cap Y_2$, so $f(x) \in \upset(Y \cap Y_2)$, and hence $x\in f^{-1}\upset(Y \cap Y_2)$. 

(2)$\Rightarrow$(3) Since $\upset y$ is a closed upset, $\upset y = \bigcap\{U \in \clopup(X_2) \mid y \in U\}$ by \cref{lemma:properties-Priestley-2}. Therefore, since $y \in Y_2$, we have $\upset y = \bigcap \{ \upset (U \cap Y_2) \mid y \in U\in \clopup(X_2) \}$. Thus, by (2) and Lemma~\ref{lemma:hp-kernels},
\begin{align*}
\bigcap \{ f^{-1}(U) \mid y \in U\in \clopup(X_2) \} 
&= f^{-1}\left( \bigcap \{U \in \clopup(X_2) \mid y \in U\} \right) \\
&= f^{-1}\left(\bigcap \{ \upset (U \cap Y_2) \mid y \in U\in \clopup(X_2) \} \right) \\
&= \bigcap \{ f^{-1}\upset (U \cap Y_2) \mid y \in U\in \clopup(X_2) \} \\
&= \bigcap \{ \upset(f^{-1}(U) \cap Y_1) \mid y \in U\in \clopup(X_2) \} \\
&= \bigcap \{ \ker f^{-1}(U) \mid y \in U\in \clopup(X_2) \}.
\end{align*}
Consequently, 
\[
f^{-1}(\upset y) = \bigcap \{ f^{-1}(U) \mid y \in U\in \clopup(X_2) \} = \bigcap \{ \ker f^{-1}(U) \mid y \in U\in \clopup(X_2) \}
\] 
is a Scott upset by \cref{lemma:intersections-of-kernels-scott}.

(3)$\Rightarrow$(4) Let $F$ be a Scott upset of $X_2$. By (3) we have
    \begin{align*}
        \min f^{-1}(F) 
        &= \min f^{-1}\bigcup \{\upset y \mid y \in \min F\} \\
        &= \min \bigcup \{f^{-1}(\upset y) \mid y \in \min F\} \\
        &\subseteq \bigcup \{\min f^{-1}(\upset y) \mid y \in \min F\} \subseteq Y_1.
    \end{align*}
    Thus, $f^{-1}(F)$ is a Scott upset of $X_1$.

(4)$\Rightarrow$(5) Suppose $y_2 \in \downset f(x) \cap Y_2$. Then $\upset y_2$ is a Scott upset of $X_2$, so $f^{-1}(\upset y_2)$ is a Scott upset of $X_1$ by (4).
Since $x \in f^{-1}(\upset y_2)$, there is $y_1 \in \min f^{-1}(\upset y_2)$ such that $y_1 \leq x$. Therefore, $y_2 \leq f(y_1)$ and $y_1 \in \downset x \cap Y_1$. Thus, $y_2 \in \downset f(\downset x \cap Y_1)$.

(5)$\Rightarrow$(1) Let $x \in f^{-1}(\ker U)$. Then $f(x) \in \ker(U)$, and \cref{lemma:hp-kernels} implies that $f(x) \in \upset(U \cap Y_2)$. Therefore, there is $y \in \downset f(x) \cap (U \cap Y_2)$. By (5), $y \in \downset f(\downset x \cap Y_1)$, so there is $y' \in \downset x \cap Y_1$ with $y \leq f(y')$. Thus, $f(y') \in U$, and hence $y' \in f^{-1}(U) \cap Y_1$. Consequently, \cref{lemma:hp-kernels} yields that $x \in \upset (f^{-1}(U) \cap Y_1) = \ker(f^{-1}(U) \cap Y_1) \subseteq \ker f^{-1}(U)$. 
\end{proof}

Let $h : L_1 \to L_2$ be a frame homomorphism between continuous frames, $f : X_{L_2} \to X_{L_1}$ its dual L-morphism, and $\functor Y f: Y_{L_2} \to Y_{L_1}$ the restriction of $f$. 
The next theorem characterizes when each of these maps is proper.

\begin{theorem} \label{thm:proper-maps-eqv}
     The following are equivalent.     
    \begin{enumerate}
        \item $h : L_1 \to L_2$ is a proper frame homomorphism.
        \item $f : X_{L_2} \to X_{L_1}$ is a proper L-morphism.
        \item $\functor Y f : Y_{L_2} \to Y_{L_1}$ is a proper map.    
    \end{enumerate}
\end{theorem}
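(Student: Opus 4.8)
The plan is to prove the theorem by establishing the equivalence (1)$\Leftrightarrow$(2) via results already in hand, and then to prove (2)$\Leftrightarrow$(3) by transporting the notion of properness through the equivalence $\functor Y$ between CL-spaces and locally compact sober spaces. The equivalence (1)$\Leftrightarrow$(2) is in fact immediate: \cref{lemma:proper-iff-priestley} states precisely that a frame homomorphism $h$ is proper iff its dual L-morphism $f$ satisfies the inclusion~(\ref{eq:properness}) for all clopen upsets, which by \cref{def: proper L-morphism} is exactly what it means for $f$ to be a proper L-morphism. So the real content is the equivalence with~(3).

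For (2)$\Leftrightarrow$(3), I would work through the topological characterization of properness for the restricted map $\functor Y f$. Recall that a continuous map $g : Y_2 \to Y_1$ between locally compact sober spaces is proper iff (i) $\downset g(A)$ is closed for each closed $A \subseteq Y_2$, and (ii) $g^{-1}(B)$ is compact for each compact saturated $B \subseteq Y_1$; moreover, by \cref{rem:2.5}, since $Y_2$ is sober and $Y_1$ is locally compact, condition (i) follows from (ii), so properness of $\functor Y f$ reduces to condition (ii) alone. The key translation tool is \cref{thm:sfilt-ksat}: the compact saturated sets of $Y_i$ are exactly the sets $F \cap Y_i$ for $F$ a Scott upset of $X_i$, with inverse $K \mapsto \upset K$. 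Using this dictionary, I would show that $(\functor Y f)^{-1}(F \cap Y_1) = f^{-1}(F) \cap Y_2$ for a Scott upset $F$ of $X_1$ (here \cref{lem: f restricts} guarantees $f$ maps $Y_2$ into $Y_1$ so the restriction behaves well), and that this is compact in $Y_2$ precisely when $f^{-1}(F)$ is a Scott upset of $X_1$—again by \cref{thm:sfilt-ksat} applied to $X_2$. This identifies condition (ii) for $\functor Y f$ with condition \cref{thm: equiv conditions-4} of the previous theorem, namely that $f^{-1}$ carries Scott upsets to Scott upsets.

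The link back to (2) is then supplied by \cref{thm: equiv conditions}, which already establishes that $f$ being a proper L-morphism is equivalent to $f^{-1}(F)$ being a Scott upset for every Scott upset $F$ of $X_2$ (condition \cref{thm: equiv conditions-4} there). Thus the strategy is to show: $f$ proper $\iff$ $f^{-1}$ preserves Scott upsets (by \cref{thm: equiv conditions}) $\iff$ $\functor Y f$ pulls back compact saturated sets to compact saturated sets (by \cref{thm:sfilt-ksat}) $\iff$ $\functor Y f$ is proper (by \cref{rem:2.5}). The main obstacle I anticipate is the careful verification of the set-theoretic identity $(\functor Y f)^{-1}(K) = f^{-1}(\upset K) \cap Y_2$ and that compactness in the subspace $Y_2$ matches being a Scott upset in $X_2$; this requires using the homeomorphism $Y_i \cong pt(\clopup(X_i))$ from \cref{lem: Y is sober} together with the isomorphism of \cref{thm:sfilt-ksat}, and checking that the restriction functor $\functor Y$ sends $f$ to the expected continuous map so that the preimages genuinely correspond. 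Once this compatibility is pinned down, the rest is a formal chaining of equivalences, and a concluding sentence noting that all three conditions coincide completes the proof.
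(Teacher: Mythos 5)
Your handling of (1)$\Leftrightarrow$(2) is exactly the paper's: it is immediate from \cref{lemma:proper-iff-priestley} and \cref{def: proper L-morphism}. Your (2)$\Rightarrow$(3) also matches the paper's argument: for a Scott upset $F$ of $X_{L_1}$, \cref{thm: equiv conditions-4} gives that $f^{-1}(F)$ is a Scott upset of $X_{L_2}$, the identity $(\functor Y f)^{-1}(F \cap Y_{L_1}) = f^{-1}(F) \cap Y_{L_2}$ holds because $f(Y_{L_2}) \subseteq Y_{L_1}$ (\cref{lem: f restricts}), and \cref{thm:sfilt-ksat} then yields compactness; together with \cref{rem:2.5} this gives properness of $\functor Y f$.

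The gap is in (3)$\Rightarrow$(2), in the clause ``this is compact in $Y_2$ precisely when $f^{-1}(F)$ is a Scott upset \dots again by \cref{thm:sfilt-ksat}.'' Only one direction of that ``precisely when'' follows from \cref{thm:sfilt-ksat}. The isomorphism sends a compact saturated set $K$ to $\upset K$, so from compactness of $f^{-1}(F) \cap Y_{L_2}$ you may conclude only that $\upset\bigl(f^{-1}(F) \cap Y_{L_2}\bigr)$ is a Scott upset. To conclude that $f^{-1}(F)$ itself is a Scott upset you need the reverse inclusion
\[
f^{-1}(F) \subseteq \upset\bigl(f^{-1}(F) \cap Y_{L_2}\bigr),
\]
i.e., that every point of $f^{-1}(F)$ lies above a \emph{localic} point of $f^{-1}(F)$; this is equivalent to $\min f^{-1}(F) \subseteq Y_{L_2}$ and is precisely what must be proved, not a formal consequence of the bijection. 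This is where the paper's proof does its real work: it reduces to $F = \upset y$ with $y \in Y_{L_1}$ (using \cref{thm: equiv conditions-3} rather than \cref{thm: equiv conditions-4}), and proves $f^{-1}(\upset y) = \upset g^{-1}(\upset y \cap Y_{L_1})$ by a separation argument. Given $x \notin \upset g^{-1}(\upset y \cap Y_{L_1})$, one separates by a clopen downset $D \ni x$ disjoint from $g^{-1}(\upset y \cap Y_{L_1})$, invokes the \emph{other} properness condition of $g$ (condition (i) of \cref{def:2.4(2)}: $\downset g(A)$ closed for closed $A$) to realize $\downset g(D \cap Y_{L_2}) \cap Y_{L_1}$ as $E \cap Y_{L_1}$ for a clopen downset $E$ of $X_{L_1}$ missing $y$, and then uses L-spatiality of CL-spaces and the closedness of $f$ to get $\downset f(D) = \downset \cl g(D \cap Y_{L_2}) \subseteq E$, whence $x \notin f^{-1}(\upset y)$. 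Your proposal never engages condition (i) of topological properness (you discard it via \cref{rem:2.5} at the outset), and no purely formal chaining of \cref{thm:sfilt-ksat} with \cref{thm: equiv conditions} can substitute for this step; as written, the proof does not go through.
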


\begin{proof}
(1)$\Leftrightarrow$(2) This follows from \cref{lemma:proper-iff-priestley}.
    
(2)$\Rightarrow$(3)
We let $g = \functor Y f$ and verify that $g$ satisfies \cref{def:2.4(2)}. By \cref{rem:2.5}, it is sufficient to show that $g^{-1}(U)$ is compact for each compact saturated $U$ in $Y_{L_1}$.
Since $U$ is compact saturated in $Y_{L_1}$, we have that $\upset U$ is a Scott upset of $X_{L_1}$ by 
\cref{thm:sfilt-ksat}.
Hence, $f^{-1}(\upset U)$ is a Scott upset of $X_{L_2}$ by \cref{thm: equiv conditions-4}. Thus, $f^{-1}(\upset U) \cap Y_{L_2}$ is compact saturated in $Y_{L_2}$ by
\cref{thm:sfilt-ksat}. But $f^{-1}(\upset U) \cap Y_{L_2} = g^{-1}(U)$ because $U$ is saturated in $Y_{L_2}$ and $g$ is the restriction of $f$ to $Y_{L_2}$. Therefore, $g^{-1}(U)$ is compact.

(3)$\Rightarrow$(2) By \cref{thm: equiv conditions-3}, it is enough to show that $f^{-1}(\upset y)$ is a Scott upset of $X_{L_2}$ for each $y \in Y_{L_1}$. Since $y \in Y_{L_1}$, we have that $\upset y$ is a Scott upset of $X_{L_1}$, so $\upset y \cap Y_{L_1}$ is compact saturated in $Y_{L_1}$ by \cref{thm:sfilt-ksat}. Let $g = \functor Y f$. Because $g$ is proper, $g^{-1}(\upset y \cap Y_{L_1})$ is compact saturated in $Y_{L_2}$. Hence, $\upset g^{-1}(\upset y \cap Y_{L_1})$ is a Scott upset of $X_{L_2}$ by \cref{thm:sfilt-ksat}.  Therefore, it suffices to show that $f^{-1}(\upset y) = \upset g^{-1}(\upset y \cap Y_{L_1})$.

Clearly $\upset g^{-1}(\upset y \cap Y_{L_1}) \subseteq f^{-1}(\upset y)$. For the reverse inclusion, suppose $x \notin \upset g^{-1}(\upset y \cap Y_{L_1})$. Then there is a clopen downset $D$ of $X_{L_2}$ such that $x \in D$ and $D \cap g^{-1}(\upset y \cap Y_{L_1}) = \varnothing$.
Hence, $y \notin \downset g(D \cap Y_{L_2})$. Since $g$ is proper, $\downset g(D \cap Y_{L_2}) \cap Y_{L_1}$ is closed in $Y_{L_1}$, and so $\downset g(D \cap Y_{L_2}) \cap Y_{L_1} = E \cap Y_{L_1}$ for some clopen downset $E$ of $X_{L_1}$. Therefore, $y \not \in E$ and $g(D \cap Y_{L_2}) \subseteq E$, so $\downset \cl g(D \cap Y_{L_2}) \subseteq E$. Because $X_{L_1}$ and $X_{L_2}$ are CL-spaces, they are L-spatial by \cref{thm:hp-implies-spatial}. Thus, since $f$ is a closed map, we have
\[
\downset f(D) = \downset f \cl(D \cap Y_{L_2}) = \downset \cl f(D \cap Y_{L_2}) = \downset \cl g(D \cap Y_{L_2}) \subseteq E.
\] Consequently, $y \not \in \downset f(D)$, and hence $x \not \in f^{-1}(\upset y)$.
\end{proof}

\begin{corollary}  \label{cor: proper}
    Suppose $X_1$, $X_2$ are CL-spaces and $g : Y_1 \to Y_2$ is a proper map between their localic parts. Then there is a proper L-morphism $f : X_1 \to X_2$ extending $g$. \label{lemma: proper extension}
\end{corollary}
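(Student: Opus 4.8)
The plan is to produce the extension in two stages: first build an L-morphism extending $g$ using the general extension result, and then observe that this L-morphism is automatically \emph{proper} precisely because its localic restriction $g$ is proper.

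For the first stage, I would note that every CL-space is an SL-space: by \cref{thm:hp-implies-spatial} a CL-space is L-spatial, which is exactly the defining condition of an SL-space. Hence $X_1$ and $X_2$ satisfy the hypotheses of \cref{lemma: g extends}, and since the proper map $g$ is in particular continuous, that lemma supplies an L-morphism $f : X_1 \to X_2$ extending $g$. By construction the localic restriction $\functor Y f$ coincides with $g$.

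For the second stage, I would invoke \cref{thm:proper-maps-eqv}, whose relevant content is the equivalence between an L-morphism between CL-spaces being proper and its localic restriction being a proper map. Concretely, by Pultr--Sichler duality (\cref{thm: PS}) the L-morphism $f$ is the dual $\functor X(h)$ of the frame homomorphism $h = f^{-1} : \clopup(X_2) \to \clopup(X_1)$, and since $X_1, X_2$ are CL-spaces the frames $\clopup(X_1)$ and $\clopup(X_2)$ are continuous by \cref{thm:hp-iff-continuous}. The map $\functor Y f = g$ is then exactly the localic restriction appearing as item (3) of \cref{thm:proper-maps-eqv}, matching directions via $f : X_1 \to X_2 \leftrightarrow h : \clopup(X_2) \to \clopup(X_1)$. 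As $g$ is proper by hypothesis, the implication (3)$\Rightarrow$(2) of that theorem yields that $f$ is a proper L-morphism extending $g$, as required.

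The main obstacle is ensuring that \cref{thm:proper-maps-eqv} is applicable to the abstractly produced extension $f$, rather than only to the dual of a prescribed frame homomorphism. This is resolved by identifying $f$ with $\functor X(h)$ through Pultr--Sichler duality; alternatively, one can observe directly that the proof of the equivalence (2)$\Leftrightarrow$(3) in \cref{thm:proper-maps-eqv} uses only that $f$ is an L-morphism between CL-spaces (together with \cref{thm:sfilt-ksat} and \cref{thm: equiv conditions}), so it applies verbatim to our $f$. Everything else is the routine bookkeeping of matching arrow directions and confirming $\functor Y f = g$.
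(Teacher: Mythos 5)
Your proposal is correct and follows essentially the same route as the paper's own proof: apply \cref{lemma: g extends} to obtain an L-morphism $f$ extending $g$ (so $\functor Y f = g$), then conclude properness of $f$ from \cref{thm:proper-maps-eqv}. The extra care you take --- noting via \cref{thm:hp-implies-spatial} that CL-spaces are SL-spaces so the extension lemma applies, and justifying that \cref{thm:proper-maps-eqv} applies to the abstractly constructed $f$ through Pultr--Sichler duality --- is detail the paper leaves implicit, but it is the same argument.
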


\begin{proof}
    By \cref{lemma: g extends}, there is an L-morphism $f: X_1 \to X_2$ extending $g$. Thus, $\functor Y f = g$, and so $f$ is proper by \cref{thm:proper-maps-eqv}.
\end{proof}

\begin{theorem}\label{thm:hl-spaces}
    {\CL} is equivalent to {\LCSob}.
\end{theorem}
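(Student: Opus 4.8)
The plan is to show that the functor $\functor Y$ constructed earlier restricts to an equivalence between $\CL$ and $\LCSob$, after which the statement follows from the standard fact that an essentially surjective full and faithful functor is an equivalence. First I would verify that this restriction is well defined. On objects, if $X$ is a CL-space then $X$ is L-spatial by \cref{thm:hp-implies-spatial}, so its localic part $Y$ is sober by \cref{lem: Y is sober} and locally compact by \cref{thm:hp-iff-lc} (applied to the spatial frame $\functor D(X)$, whose localic part is $Y$). Hence $\functor Y X = Y$ is a locally compact sober space, i.e.\ an object of $\LCSob$. On morphisms, a proper L-morphism $f : X_1 \to X_2$ is sent to its restriction $\functor Y f$, which is a proper map by \cref{thm:proper-maps-eqv}. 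Since $\functor Y$ already preserves identities and composition, this data assembles into a functor $\functor Y : \CL \to \LCSob$.

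It then remains to check that this restricted functor is essentially surjective, full, and faithful, and here each point has been prepared by an earlier result. For essential surjectivity I would mimic the proof of \cref{thm: Y is functor}: given $Z \in \LCSob$, set $X = X_{\mathcal O(Z)}$, the Priestley space of the frame $\mathcal O(Z)$. As in that proof, the localic part of $X$ is homeomorphic to $pt(\mathcal O(Z)) \cong Z$, so $\functor Y X \cong Z$. Moreover $\mathcal O(Z)$ is spatial and $Z \cong Y$ is locally compact, so \cref{thm:hp-iff-lc} guarantees that $X$ is a CL-space, hence genuinely an object of $\CL$. Fullness is exactly \cref{cor: proper}: every proper map $g : Y_1 \to Y_2$ between the localic parts of CL-spaces extends to a proper L-morphism $f : X_1 \to X_2$ with $\functor Y f = g$. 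Faithfulness is inherited from \cref{thm: Y is full and faithful}: since CL-spaces are SL-spaces and proper L-morphisms are in particular L-morphisms, $\CL$ is a subcategory of $\SL$, on which $\functor Y$ is already faithful.

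None of these steps poses a genuine obstacle, precisely because the hard work has been front-loaded into Theorems~\ref{thm:hp-iff-lc} and~\ref{thm:proper-maps-eqv} and \cref{cor: proper}. The one point that requires care is essential surjectivity: one must recognize that the CL-space witnessing a locally compact sober space $Z$ is exactly $X_{\mathcal O(Z)}$, and then invoke \cref{thm:hp-iff-lc} to confirm that this $L$-space is in fact a CL-space rather than merely an SL-space. Assembling the three properties, $\functor Y : \CL \to \LCSob$ is an equivalence, which is the desired conclusion; combined with \cref{thm:hl-frames} this also yields the promised alternative proof of Hofmann-Lawson duality.
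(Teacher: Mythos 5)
Your proof is correct and follows essentially the same route as the paper: well-definedness of the restricted functor $\functor Y$ on objects via \cref{thm:hp-iff-lc} and on morphisms via \cref{thm:proper-maps-eqv}, fullness via \cref{cor: proper}, and essential surjectivity and faithfulness inherited from \cref{thm: Y is functor,thm: Y is full and faithful}. The only difference is that you spell out details the paper leaves implicit (invoking \cref{thm:hp-implies-spatial} and \cref{lem: Y is sober}, and rechecking that $X_{\mathcal O(Z)}$ is a CL-space), which is a faithful expansion of the same argument.
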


\begin{proof} 
It follows from \cref{thm:hp-iff-lc} that the restriction of $\functor Y$ is well defined on objects. By \cref{thm:proper-maps-eqv},
the restriction of $\functor Y$ is also well defined on morphisms. This together with \cref{cor: proper} shows that \cref{thm: Y is functor,thm: Y is full and faithful} apply to yield that the restriction $\functor Y : {\CL} \to {\LCSob}$ is essentially surjective, full, and faithful.
\end{proof}

We thus obtain the following alternative proof of Hofmann-Lawson duality:
  
\begin{corollary}[Hofmann-Lawson]
    {\CFrm} is dually equivalent to {\LCSob}.
\end{corollary}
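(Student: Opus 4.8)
The plan is to obtain the result by composing the two equivalences established immediately above, so that essentially no new work is required. \cref{thm:hl-frames} provides a dual equivalence between \CFrm and \CL, realized by the restrictions of the Priestley functors \functor X and \functor D together with the units $\varphi$ and $\varepsilon$. \cref{thm:hl-spaces} provides a covariant equivalence between \CL and \LCSob, realized by the restriction of the localic-part functor \functor Y. Since the composite of a dual equivalence with an ordinary equivalence is again a dual equivalence, combining these two results immediately yields that \CFrm is dually equivalent to \LCSob.

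To make this explicit, I would fix a quasi-inverse $\functor S : \LCSob \to \CL$ of $\functor Y$ (whose existence is guaranteed by \cref{thm:hl-spaces}) and take as the witnessing contravariant functors $\functor Y \circ \functor X : \CFrm \to \LCSob$ and $\functor D \circ \functor S : \LCSob \to \CFrm$. Chasing the relevant natural isomorphisms gives
\[
(\functor D \circ \functor S) \circ (\functor Y \circ \functor X) \;\cong\; \functor D \circ \functor X \;\cong\; \mathrm{id}_{\CFrm}
\]
and
\[
(\functor Y \circ \functor X) \circ (\functor D \circ \functor S) \;\cong\; \functor Y \circ \functor S \;\cong\; \mathrm{id}_{\LCSob},
\]
where the inner isomorphisms $\functor S \circ \functor Y \cong \mathrm{id}_{\CL}$ and $\functor X \circ \functor D \cong \mathrm{id}_{\CL}$ come from \cref{thm:hl-spaces,thm:hl-frames} respectively. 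This exhibits the desired dual equivalence.

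Since all of the substantive work has already been carried out in \cref{thm:hl-frames} (identifying \CFrm with \CL) and \cref{thm:hl-spaces} (identifying \CL with \LCSob), there is no real obstacle remaining in this corollary; the only thing to verify is the purely formal fact that dual equivalences compose with equivalences to give dual equivalences, which is routine. The genuine content — characterizing continuous frames as CL-spaces via packedness (\cref{thm:hp-iff-continuous}), matching CL-spaces with locally compact sober spaces through the localic part (\cref{thm:hp-iff-lc}), and showing that properness is preserved across all three settings (\cref{thm:proper-maps-eqv}) — is precisely what licenses this composition, and that content lives in the earlier results rather than in the statement at hand.
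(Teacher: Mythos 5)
Your proposal is correct and matches the paper's own proof, which simply combines \cref{thm:hl-frames} and \cref{thm:hl-spaces}; the quasi-inverse bookkeeping you spell out is the routine formal content implicit in that combination.
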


\begin{proof}
   Combine \cref{thm:hl-frames,thm:hl-spaces}.
\end{proof}

\section{Deriving dualities for stably continuous frames} \label{sec: 5}

To derive the two dualities for stably continuous frames, we first characterize stability of $\ll$ in the language of Priestley spaces.

\begin{lemma} \label{lemma:locally-stably-compact-iff-kernels-intersections}
Let $L$ be a continuous frame, $X_L$ its Priestley space, and $Y_L \subseteq X_L$ the localic part of $X_L$. For $a, b \in L$ we have 
\[
(\forall c\in L)(c \ll a,b \Rightarrow c \ll a \wedge b) 
\mbox{ iff } \ker(a) \cap \ker(b) = \ker(a \wedge b).
\]
\end{lemma}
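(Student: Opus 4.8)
The plan is to reduce the claimed equivalence to a single set-inclusion and then translate between the way-below relation on $L$ and the kernel operator on $X_L$ via \cref{lemma:ll-iffs-5}. First I would observe that one inclusion is automatic: since $a \wedge b \le a$ and $a \wedge b \le b$, monotonicity of the kernel (\cref{lemma:kernel-monotone}) gives $\ker(a \wedge b) \subseteq \ker(a) \cap \ker(b)$, regardless of stability. Hence the asserted equality $\ker(a) \cap \ker(b) = \ker(a \wedge b)$ is equivalent to the reverse inclusion $\ker(a) \cap \ker(b) \subseteq \ker(a \wedge b)$, and it is this inclusion that I would match up with the stability condition.

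For the forward direction, assuming the stability condition, I would take $x \in \ker(a) \cap \ker(b)$ and unpack the definition of the kernel: there are clopen upsets $V_1, V_2$ with $x \in V_1 \ll \varphi(a)$ and $x \in V_2 \ll \varphi(b)$. Writing $V_1 = \varphi(c_1)$ and $V_2 = \varphi(c_2)$ (every clopen upset of $X_L$ has this form by Priestley duality, since $\varphi$ is an isomorphism onto $\clopup(X_L)$), \cref{lemma:ll-iffs-5} gives $c_1 \ll a$ and $c_2 \ll b$. The key move is to form the single element $c = c_1 \wedge c_2$: since $\varphi$ preserves finite meets, $x \in \varphi(c_1) \cap \varphi(c_2) = \varphi(c)$, and because $c \le c_1 \ll a$ and $c \le c_2 \ll b$, together with the standard fact that $d \le e \ll u$ implies $d \ll u$, we obtain $c \ll a$ and $c \ll b$. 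Stability then yields $c \ll a \wedge b$, so $\varphi(c) \subseteq \ker(a \wedge b)$ by \cref{lemma:ll-iffs-5}, whence $x \in \ker(a \wedge b)$.

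For the converse, assuming the equality of kernels, I would take any $c$ with $c \ll a$ and $c \ll b$. By \cref{lemma:ll-iffs-5} this means $\varphi(c) \subseteq \ker(a)$ and $\varphi(c) \subseteq \ker(b)$, hence $\varphi(c) \subseteq \ker(a) \cap \ker(b) = \ker(a \wedge b)$; applying \cref{lemma:ll-iffs-5} once more gives $c \ll a \wedge b$, which is exactly the stability implication for the pair $a, b$.

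The argument is largely a bookkeeping translation, so I do not anticipate a deep obstacle; the one point requiring care is the forward direction, where the stability hypothesis is phrased for elements of $L$ whereas the hypothesis $x \in \ker(a) \cap \ker(b)$ is phrased pointwise in $X_L$. Manufacturing the common witness $c = c_1 \wedge c_2$—and checking that meeting the two witnesses keeps $x$ inside $\varphi(c)$ while preserving both way-below relations—is the crux, and it rests on $\varphi$ being a lattice homomorphism and on the downward heredity of $\ll$. I would also note that continuity of $L$ is not actually used in either implication; it is part of the ambient hypotheses of this section (ensuring, via \cref{thm:packed-iff-continuous}, that $\ker(a)$ is dense in $\varphi(a)$) but plays no role in the present equivalence.
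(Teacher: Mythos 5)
Your proposal is correct and is essentially the paper's own proof: both handle the easy inclusion by monotonicity of $\ker$ (\cref{lemma:kernel-monotone}), both prove the hard inclusion by taking the meet of the two clopen-upset witnesses and invoking downward heredity of $\ll$ together with stability, and both run the converse through \cref{lemma:ll-iffs-5} in exactly the same way. The only cosmetic difference is that the paper transfers the stability hypothesis to the Priestley side up front and forms the meet there, while you pull the witnesses down to $L$, apply stability in the frame, and translate back; your closing observation that continuity of $L$ is never used also holds for the paper's argument.
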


\begin{proof}
    First suppose that $(\forall c\in L)(c \ll a,b \Rightarrow c \ll a \wedge b)$. Then \cref{lemma:ll-iffs-5} gives
\[
   (\forall c\in L)( \varphi(c) \ll \varphi(a),\varphi(b) \implies \varphi(c) \ll \varphi(a) \cap \varphi(b) ). \tag{$\star$} \label{eq:star}
\]   
 Since $\ker$ is monotone (see \cref{lemma:kernel-monotone}), $\ker(a \wedge b) \subseteq \ker(a) \cap \ker(b)$. For the reverse inclusion, let $x \in \ker(a) \cap \ker(b)$. Then there are $d,e\in L$ with $x \in \varphi(d) \ll \varphi(a)$ and $x \in \varphi(e) \ll \varphi(b)$. Let $c=d\wedge e$. Then $x \in \varphi(c) \ll \varphi(a), \varphi(b)$. Consequently, by (\ref{eq:star}), $\varphi(c) \ll \varphi(a) \cap \varphi(b) = \varphi(a \wedge b)$. 
    Therefore, $x \in \ker(a \wedge b)$ by \cref{lemma:ll-iffs-5}. Thus, $\ker(a) \cap \ker(b) = \ker(a \wedge b)$. 
    
    For the converse, suppose that $\ker(a) \cap \ker(b) = \ker(a \wedge b)$. Let $c \in L$ with $c \ll a,b$. Then $\varphi(c) \subseteq \ker(a),\ker(b)$ by \cref{lemma:ll-iffs-5}. Therefore, $\varphi(c) \subseteq \ker(a) \cap \ker(b) = \ker(a \wedge b)$. Thus, using \cref{lemma:ll-iffs-5} again, we obtain $c\ll a \wedge b$ .
\end{proof}

The previous lemma motivates defining stability in terms of kernels commuting with intersections. We will see in \cref{lemma:kernel-stable-Scott} that for CL-spaces this property coincides with the property that binary intersections of Scott upsets are Scott upsets.

\begin{definition} \label{def: scott-stable}
    Let $X$ be an L-space. 
    \begin{enumerate}
    \item We call $X$ \emph{kernel-stable} if for all clopen upsets $U$ and $V$ of $X$ we have 
    \[\ker U \cap \ker V = \ker(U \cap V).\] 
    \item We call $X$ \emph{Scott-stable} if for all Scott upsets $F$ and $G$ of $X$ we have that $F \cap G$ is a Scott upset.
    \end{enumerate}
\end{definition}

\begin{lemma} \label{lemma:kernel-stable-Scott}
    Let $X$ be a CL-space.
    \begin{enumerate}[ref=\thelemma(\arabic*)]
        \item For every Scott upset $F$, we have $F = \bigcap\{\ker U \mid F \subseteq U \in \clopup(X) \}$.
        \item $X$ is kernel-stable iff $X$ is Scott-stable. \label[lemma]{lemma:kernel-stable-Scott-2}
    \end{enumerate}
\end{lemma}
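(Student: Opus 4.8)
The plan is to prove the two items in order, using the CL-space hypothesis through \cref{thm:hp-implies-spatial} (which gives L-spatiality, hence density of $Y$) and through the kernel characterization \cref{lemma:hp-kernels}.

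For item (1), I would first note that the inclusion $F \subseteq \bigcap\{\ker U \mid F \subseteq U \in \clopup(X)\}$ holds because if $F \subseteq U$ then, since $F$ is a Scott upset, \cref{lem: Scott upset} and \cref{lemma:ll-iffs} (or directly the argument that $\upset$ of a minimal point sits inside $\ker U$) give $F \subseteq \ker U$; more cleanly, $\min F \subseteq Y$ and each such minimal point $y$ satisfies $y \in U \cap Y$, so by \cref{lemma:hp-kernels} we get $F = \upset \min F \subseteq \upset(U \cap Y) = \ker U$. For the reverse inclusion, suppose $x \notin F$. Since $F$ is a closed upset, by \cref{lemma:properties-Priestley-2} there is a clopen upset $U$ with $F \subseteq U$ and $x \notin U$; as $\ker U \subseteq U$ by \cref{lemma:kernel open upset}, this gives $x \notin \ker U$, so $x$ is not in the intersection. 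This establishes (1).

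For item (2), I would prove both directions using (1) together with \cref{lemma:hp-kernels}. Assume $X$ is Scott-stable and let $U,V \in \clopup(X)$. The inclusion $\ker(U \cap V) \subseteq \ker U \cap \ker V$ is immediate from monotonicity (\cref{lemma:kernel-monotone}). For the reverse, using \cref{lemma:hp-kernels} I rewrite $\ker U = \upset(U \cap Y)$ and $\ker V = \upset(V \cap Y)$, and the point is to show $\upset(U \cap Y) \cap \upset(V \cap Y) \subseteq \upset((U \cap V) \cap Y)$. The natural route is to pass through Scott upsets: one shows each $\ker U$ is an intersection (or is controlled by) Scott upsets lying between and apply Scott-stability to conclude the intersection is again a Scott upset whose minimal points lie in $Y$. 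Conversely, assuming $X$ is kernel-stable, I take Scott upsets $F,G$ and use (1) to write $F = \bigcap\{\ker U \mid F \subseteq U\}$ and $G = \bigcap\{\ker V \mid G \subseteq V\}$; distributing the intersection and invoking the kernel-stable identity $\ker U \cap \ker V = \ker(U \cap V)$ expresses $F \cap G$ as an intersection of kernels, and then \cref{lemma:intersections-of-kernels-scott} (after verifying the relevant family is down-directed and that the intersection equals the intersection of the corresponding kernels) yields that $F \cap G$ is a Scott upset.

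The main obstacle I anticipate is the kernel-stable $\Rightarrow$ Scott-stable direction, specifically arranging the family $\{\ker(U \cap V) : F \subseteq U, G \subseteq V\}$ so that \cref{lemma:intersections-of-kernels-scott} genuinely applies: one must check down-directedness (which follows since $\clopup(X)$ is closed under finite intersections and $\ker$ is monotone) and, more delicately, the hypothesis $\bigcap \mathcal{U} = \bigcap\{\ker W \mid W \in \mathcal{U}\}$, i.e.\ that intersecting the clopen upsets $U \cap V$ already gives $F \cap G$ and agrees with intersecting their kernels. This last equality is exactly where item (1) does the work, since it identifies each of $F$ and $G$ with an intersection of kernels over the clopen upsets containing it, and the kernel-stable identity lets these two intersections be merged into a single intersection of kernels.
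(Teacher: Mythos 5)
Your proposal is sound, and for item (1) and for the kernel-stable $\Rightarrow$ Scott-stable half of item (2) it is essentially the paper's proof: the paper likewise writes $F \cap G = \bigcap\{\ker U \mid F \subseteq U\} \cap \bigcap\{\ker V \mid G \subseteq V\}$, merges the two intersections via kernel-stability, re-indexes (using compactness of $X$) as $\bigcap\{\ker W \mid F \cap G \subseteq W \in \clopup(X)\}$, and applies \cref{lemma:intersections-of-kernels-scott}; the hypotheses of that lemma are verified exactly as you anticipate. (Your variant of the forward inclusion in (1), via $F = \upset\min F \subseteq \upset(U \cap Y) = \ker U$, is also fine; the paper instead uses packedness of $U$ together with \cref{lem: Scott upset} --- both rest on the CL hypothesis.) The one place you genuinely diverge is the Scott-stable $\Rightarrow$ kernel-stable half. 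The paper argues at the level of clopen upsets: since $\ker U \cap \ker V$ and $\ker(U \cap V)$ are open upsets, by \cref{lemma:properties-Priestley-1} it suffices to compare the clopen upsets $W$ contained in them, and $W \subseteq \ker U \cap \ker V$ iff $W \ll U, V$ iff (by \cref{lemma:ll-iff-Scott-inbetween}) Scott upsets interpolate between $W$ and each of $U$, $V$, which Scott-stability converts into a single Scott upset between $W$ and $U \cap V$, i.e.\ $W \subseteq \ker(U \cap V)$. Your pointwise route through \cref{lemma:hp-kernels} also works and is in fact slightly more economical, but as written it is only a gesture, and the phrase that each $\ker U$ ``is an intersection of Scott upsets'' is false (kernels are open upsets, Scott upsets are closed). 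The correct completion is: given $x \in \ker U \cap \ker V = \upset(U \cap Y) \cap \upset(V \cap Y)$, choose $y_1 \in U \cap Y$ and $y_2 \in V \cap Y$ with $y_1, y_2 \leq x$; then $\upset y_1$ and $\upset y_2$ are Scott upsets, so by Scott-stability $\upset y_1 \cap \upset y_2$ is a Scott upset containing $x$; by \cref{lemma:properties-Priestley-3} there is $z \in \min(\upset y_1 \cap \upset y_2)$ with $z \leq x$, and then $z \in Y$ while $z \in \upset y_1 \cap \upset y_2 \subseteq U \cap V$, so $x \in \upset\bigl((U \cap V) \cap Y\bigr) = \ker(U \cap V)$ by \cref{lemma:hp-kernels}. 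Once stated this way, your version avoids \cref{lemma:ll-iff-Scott-inbetween} entirely, which is what it buys over the paper's argument; what the paper's argument buys is that it never leaves the algebra of clopen upsets and needs no explicit mention of the localic part $Y$.
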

\begin{proof}
    (1) Suppose $F \subseteq U \in \clopup(X)$. Since $X$ is a CL-space, $F \subseteq \cl\ker U$. By \cref{lemma:kernel open upset}, $\ker U$ is an open upset. Therefore, $F \subseteq \ker U$ by \cref{lem: Scott upset}. 
    Thus, $F \subseteq \bigcap \{\ker U \mid F \subseteq U \in \clopup(X) \}$. For the reverse inclusion, by \cref{lemma:properties-Priestley-2}, we have
    \[F = \bigcap\{U \mid F \subseteq U \in \clopup(X) \} \supseteq \bigcap \{\ker U \mid F \subseteq U \in \clopup(X) \}.\]
    
    (2) Suppose $X$ is kernel-stable. Let $F,G$ be Scott upsets. If $U,V,W$ range over clopen upsets of $X$, by (1) we have 
    \begin{align*}
        F \cap G 
        &= \bigcap\{\ker U \mid F \subseteq U\} \cap \bigcap \{\ker V \mid G \subseteq V\} \\
        &= \bigcap\{\ker U \cap \ker V \mid F \subseteq U, G \subseteq V\} \\
        &= \bigcap\{\ker (U \cap V) \mid F \subseteq U, G \subseteq V\}\\
        &= \bigcap\{\ker W \mid F \cap G \subseteq W\} \\
        &\subseteq \bigcap\{W \mid F \cap G \subseteq W\} 
        =  F \cap G,
    \end{align*}
    where the last equality follows from \cref{lemma:properties-Priestley-2}. For the second to last equality it is enough to observe that by compactness, $F \cap G \subseteq W$ is equivalent to $U \cap V \subseteq W$ for some clopen upsets $U \supseteq F$ and $V \supseteq G$.
    Thus, $F \cap G$ is a Scott upset by \cref{lemma:intersections-of-kernels-scott}, and hence $X$ is Scott-stable. 
    
    Conversely, suppose $X$ is Scott-stable. Let $U,V$ be clopen upsets. Since $\ker U$ is an open upset for each $U$ (see \cref{lemma:kernel open upset}), it suffices to show that $W \subseteq \ker(U) \cap \ker (V)$ iff $W \subseteq \ker(U \cap V)$ for each clopen upset $W$ (see \cref{lemma:properties-Priestley-1}). Let $W$ be a clopen upset. 
By \cref{lemma:ll-iffs}, $W \subseteq \ker(U) \cap \ker(V)$ iff $W \ll U,V$. By \cref{lemma:ll-iff-Scott-inbetween}, this happens iff there are Scott upsets $F$ and $G$ such that $W \subseteq F \subseteq U$ and $W \subseteq G \subseteq V$. Since $X$ is Scott-stable, the latter is equivalent to the existence of a Scott upset $H$ such that $W \subseteq H \subseteq U \cap V$. By invoking \cref{lemma:ll-iff-Scott-inbetween} again, this is equivalent to $W \ll U \cap V$, which in turn is equivalent to $W \subseteq \ker(U \cap V)$ by \cref{lemma:ll-iffs}. Thus, $X$ is kernel-stable.
\end{proof}

\begin{theorem} \label{thm:shp-iff-SCFrm}
    Let $L$ be a frame and $X_L$ its Priestley space. Then $L$ is a stably continuous frame iff $X_L$ is a Scott-stable CL-space.
\end{theorem}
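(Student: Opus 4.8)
The plan is to prove the biconditional by combining the characterization of continuity (\cref{thm:hp-iff-continuous}) with a characterization of stability, which is exactly what \cref{lemma:locally-stably-compact-iff-kernels-intersections} and \cref{lemma:kernel-stable-Scott-2} are set up to deliver. Recall that $L$ is stably continuous precisely when $L$ is continuous and $\ll$ is stable, i.e.\ $c \ll a, b$ implies $c \ll a \wedge b$ for all $a,b,c \in L$. On the space side, a Scott-stable CL-space is, by definition, a CL-space that is Scott-stable. So the natural strategy is to show that ``$L$ continuous'' matches ``$X_L$ is a CL-space'' and, assuming this, that ``$\ll$ stable'' matches ``$X_L$ Scott-stable.''

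First I would handle the continuity half. By \cref{thm:hp-iff-continuous}, $L$ is a continuous frame iff $X_L$ is a CL-space, so this equivalence is immediate and lets me assume continuity of $L$ (equivalently, that $X_L$ is a CL-space) when dealing with the stability half. Next I would translate stability of $\ll$. By \cref{lemma:locally-stably-compact-iff-kernels-intersections}, for a continuous frame $L$, the stability condition $c \ll a, b \Rightarrow c \ll a \wedge b$ holds for a fixed pair $a,b$ iff $\ker(a) \cap \ker(b) = \ker(a \wedge b)$. Since every clopen upset of $X_L$ is of the form $\varphi(a)$ for some $a \in L$, and $\varphi$ respects meets ($\varphi(a \wedge b) = \varphi(a) \cap \varphi(b)$), quantifying over all $a,b \in L$ shows that $\ll$ is stable on $L$ iff $\ker U \cap \ker V = \ker(U \cap V)$ for all clopen upsets $U,V$ of $X_L$, which is precisely the definition of $X_L$ being kernel-stable. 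Finally, \cref{lemma:kernel-stable-Scott-2} tells me that for a CL-space, kernel-stable and Scott-stable are equivalent, so stability of $\ll$ corresponds to $X_L$ being Scott-stable.

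Assembling these: $L$ is stably continuous iff $L$ is continuous and $\ll$ is stable, iff $X_L$ is a CL-space and $X_L$ is kernel-stable, iff $X_L$ is a CL-space and $X_L$ is Scott-stable, iff $X_L$ is a Scott-stable CL-space. Each step is a direct appeal to a result already established in the excerpt, so the argument is essentially a chain of equivalences.

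I do not expect a serious obstacle here, since all the substantive work has been front-loaded into the preceding lemmas. The one point requiring mild care is the quantifier bookkeeping in the stability translation: \cref{lemma:locally-stably-compact-iff-kernels-intersections} is stated for a fixed pair $a,b$, so I must universally quantify over $a,b$ and observe that the correspondence $a \mapsto \varphi(a)$ is a bijection onto the clopen upsets of $X_L$ that sends meets to intersections, ensuring that the pointwise kernel identity for all $a,b$ is genuinely the same as the kernel-stability condition for all clopen upsets $U,V$. With that noted, the proof is a short concatenation of \cref{thm:hp-iff-continuous}, \cref{lemma:locally-stably-compact-iff-kernels-intersections}, and \cref{lemma:kernel-stable-Scott-2}.
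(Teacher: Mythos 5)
Your proof is correct and follows exactly the paper's approach: the paper's own proof is literally ``Apply \cref{thm:hp-iff-continuous}, \cref{lemma:locally-stably-compact-iff-kernels-intersections}, and \cref{lemma:kernel-stable-Scott-2},'' which is precisely the chain of equivalences you assembled. Your additional care about quantifier bookkeeping and the meet-preserving bijection $a \mapsto \varphi(a)$ just makes explicit what the paper leaves implicit.
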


\begin{proof}
Apply \cref{thm:hp-iff-continuous}, \cref{lemma:locally-stably-compact-iff-kernels-intersections}, and \cref{lemma:kernel-stable-Scott-2}.
\end{proof}

\begin{definition} \label{def: StCL}
We call an L-space {\em stably continuous} or simply a \emph{StCL-space} if it is a Scott-stable CL-space. Let {\SCL} be the full subcategory of {\CL} consisting of StCL-spaces.
\end{definition}

\begin{corollary} \label{thm: st cont}
{\SCFrm} is dually equivalent to {\SCL}.
\end{corollary}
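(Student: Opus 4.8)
The plan is to mimic the derivation of \cref{thm: sfrm=sl} from \cref{thm: PS,thm: spatial}: the heavy lifting has already been done in \cref{thm:shp-iff-SCFrm}, which matches objects on the two sides, so all that remains is to observe that the dual equivalence of \cref{thm:hl-frames} restricts cleanly to the relevant full subcategories. Since both \SCFrm and \SCL are \emph{full} subcategories (of \CFrm and \CL respectively), their morphisms are inherited from the ambient categories, and so once I know the duality functors carry objects of one subcategory to objects of the other, nothing further is needed on morphisms.

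Concretely, I would recall from \cref{thm:hl-frames} that \CFrm is dually equivalent to \CL via the restrictions of $\functor X$ and $\functor D$, with the units $\varphi : L \to \functor D \functor X (L)$ and $\varepsilon : X \to \functor X \functor D(X)$ being isomorphisms. First I would check that $\functor X$ sends \SCFrm into \SCL: if $L$ is stably continuous, then by \cref{thm:shp-iff-SCFrm} its Priestley dual $X_L = \functor X(L)$ is a Scott-stable CL-space, i.e.\ a StCL-space, hence an object of \SCL. Conversely, I would check that $\functor D$ sends \SCL into \SCFrm: given a StCL-space $X$, the unit $\varepsilon : X \to \functor X \functor D(X)$ is an isomorphism, so $\functor X \functor D(X)$ is again a StCL-space, and then \cref{thm:shp-iff-SCFrm} forces $\functor D(X)$ to be a stably continuous frame, i.e.\ an object of \SCFrm.

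With the object correspondence in hand, the restricted functors $\functor X : \SCFrm \to \SCL$ and $\functor D : \SCL \to \SCFrm$ are well defined, and the units $\varphi$ and $\varepsilon$ remain isomorphisms on these subcategories (being restrictions of the isomorphisms already established in \CFrm and \CL). Therefore the restrictions of $\functor X$ and $\functor D$ yield a dual equivalence between \SCFrm and \SCL. I do not anticipate a genuine obstacle here: the only substantive input is the object-level characterization \cref{thm:shp-iff-SCFrm}, and the argument is otherwise the standard fact that a dual equivalence restricts to full subcategories that are matched up by its functors.
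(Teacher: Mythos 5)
Your proposal is correct and follows exactly the paper's own argument: the paper's proof is the one-line instruction to restrict the dual equivalence of \cref{thm:hl-frames} to the full subcategories \SCFrm and \SCL using the object-level characterization \cref{thm:shp-iff-SCFrm}, which is precisely what you carry out (with the routine details of the two-way object correspondence and fullness made explicit).
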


\begin{proof}
Restrict \cref{thm:hl-frames} to the full subcategories \SCFrm and \SCL using \cref{thm:shp-iff-SCFrm}.
\end{proof}

Next we show that \SCL is equivalent to \SLCSp.

\begin{theorem} \label{thm:shp-iff-stlc}
Let $X$ be an SL-space and $Y$ its localic part. Then $X$ is a StCL-space iff $Y$ is stably locally compact.
\end{theorem}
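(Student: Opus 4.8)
The plan is to combine the three defining properties of stable local compactness---soberness, local compactness, and coherence---with their counterparts on the $L$-space side. Throughout I set $L := \clopup(X)$, which is a spatial frame with Priestley space $X$ and localic part $Y$ because $X$ is an SL-space (see \cref{thm: spatial}); this lets me invoke the frame-based results of the previous sections. Soberness of $Y$ is free from \cref{lem: Y is sober}, and the equivalence of local compactness of $Y$ with $X$ being a CL-space is exactly \cref{thm:hp-iff-lc}. Since a StCL-space is by definition a Scott-stable CL-space (\cref{def: StCL}), the theorem reduces to a single equivalence: under the standing assumption that $X$ is a CL-space (equivalently, $Y$ is locally compact), $X$ is Scott-stable iff $Y$ is coherent.

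For the forward direction I would argue as follows. Assume $X$ is Scott-stable and let $K_1,K_2$ be compact saturated subsets of $Y$. By \cref{thm:sfilt-ksat} the sets $\upset K_1$ and $\upset K_2$ are Scott upsets of $X$, so their intersection is again a Scott upset by Scott-stability. Applying \cref{thm:sfilt-ksat} once more, $(\upset K_1 \cap \upset K_2)\cap Y$ is compact saturated in $Y$. Since $\upset K_i \cap Y = K_i$, this intersection is precisely $K_1 \cap K_2$, so $K_1 \cap K_2$ is compact and $Y$ is coherent.

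For the converse I expect the real obstacle. The naive attempt---take Scott upsets $F,G$ and try to show $F\cap G$ is a Scott upset, i.e.\ $\min(F\cap G)\subseteq Y$---runs into trouble because a minimal point of $F\cap G$ need not be minimal in $F$ or in $G$, so the Scott-upset property of $F$ and $G$ does not transfer by a direct point-chase. My plan is to sidestep this by routing through the way below relation. First I would record the compact-saturated sandwich description of $\ll$: for $a,b\in L$ one has $a\ll b$ iff there is a compact saturated $K\subseteq Y$ with $\zeta(a)\subseteq K\subseteq\zeta(b)$. This follows by combining \cref{lemma:ll-iffs-5}, the CL-space characterization \cref{lemma-ll-iff-Scott-inbetween-2}, and the isomorphism \cref{thm:sfilt-ksat}, using $\cl\zeta(a)=\varphi(a)$ from \cref{remark:spatial} to pass between $\varphi$ and $\zeta$. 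Granting this, stability of $\ll$ is immediate from coherence: if $c\ll a$ and $c\ll b$, choose compact saturated $K_a,K_b$ with $\zeta(c)\subseteq K_a\subseteq\zeta(a)$ and $\zeta(c)\subseteq K_b\subseteq\zeta(b)$; then $\zeta(c)\subseteq K_a\cap K_b\subseteq\zeta(a\wedge b)$, and $K_a\cap K_b$ is compact saturated because $Y$ is coherent, whence $c\ll a\wedge b$.

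Finally I would convert stability of $\ll$ back into the required structural property of $X$. By \cref{lemma:locally-stably-compact-iff-kernels-intersections} (applicable since $L$ is continuous by \cref{thm:hp-iff-continuous}), stability of $\ll$ for all pairs $a,b$ is equivalent to $\ker(a)\cap\ker(b)=\ker(a\wedge b)$ for all $a,b$, that is, to $X$ being kernel-stable. \cref{lemma:kernel-stable-Scott-2} then upgrades kernel-stability to Scott-stability, so $X$ is a Scott-stable CL-space, i.e.\ a StCL-space. The crux of the whole argument is the sandwich description of $\ll$ in the reverse direction, since it is exactly what lets coherence act on the intersection $K_a\cap K_b$ without any delicate analysis of minimal elements.
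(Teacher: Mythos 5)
Your proof is correct, and its skeleton matches the paper's: soberness of $Y$ comes for free from \cref{lem: Y is sober}, local compactness of $Y$ is equivalent to $X$ being a CL-space by \cref{thm:hp-iff-lc}, and the real content is the equivalence of Scott-stability with coherence. Your forward direction (Scott-stable implies coherent) is essentially identical to the paper's: both pass through \cref{thm:sfilt-ksat} to turn compact saturated sets into Scott upsets and back. Where you genuinely diverge is the converse. The paper stays inside $X$ and verifies kernel-stability by a point-chase: for $x \in \ker U \cap \ker V$ it chooses clopen upsets $U' \ll U$ and $V' \ll V$ containing $x$, interpolates Scott upsets via \cref{lemma-ll-iff-Scott-inbetween-2}, intersects down in $Y$ where coherence applies, and climbs back up using \cref{thm:sfilt-ksat} and \cref{lemma:hp-kernels}; then \cref{lemma:kernel-stable-Scott-2} upgrades kernel-stability to Scott-stability. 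You instead work frame-theoretically in $L = \clopup(X)$: you isolate the compact-saturated ``sandwich'' characterization of $\ll$ (the Priestley-side analogue of the classical description of way-below in a locally compact sober space), deduce stability of $\ll$ from coherence in one line, and translate back through \cref{lemma:locally-stably-compact-iff-kernels-intersections} and \cref{lemma:kernel-stable-Scott-2}. Both routes rest on the same two ingredients---\cref{lemma:ll-iff-Scott-inbetween} and the Hofmann-Mislove correspondence \cref{thm:sfilt-ksat}---so the mathematical content is the same; yours costs one extra translation lemma (\cref{lemma:locally-stably-compact-iff-kernels-intersections}, which the paper deploys only for \cref{thm:shp-iff-SCFrm}) but in exchange the coherence step becomes the transparent classical argument rather than a delicate chase in $X$.

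One small citation repair: the backward half of your sandwich claim (from $\zeta(a) \subseteq K \subseteq \zeta(b)$ to $a \ll b$) needs \cref{lemma-ll-iff-Scott-inbetween-1}, not part (2) alone, together with the facts that $\upset K$ is closed and $\varphi(a) = \cl\zeta(a)$ from \cref{remark:spatial}; all of these are available to you, so this is a matter of citing the right part of the lemma rather than a gap.
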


\begin{proof}
Suppose $X$ is a StCL-space. By \cref{thm:hp-iff-lc}, $Y$ is locally compact. Let $K,J$ be compact saturated sets in $Y$. Then $\upset K$ and $\upset J$ are Scott upsets by \cref{thm:sfilt-ksat}. Since $X$ is Scott-stable, $\upset K \cap \upset J$ is a Scott upset. Therefore, $K\cap J = \upset K \cap \upset J \cap Y$ is compact saturated by \cref{thm:sfilt-ksat}.

Conversely, suppose that $Y$ is stably locally compact. By \cref{thm:hp-iff-lc}, $X$ is a CL-space. By \cref{lemma:kernel-stable-Scott-2}, it is enough to show that $X$ is kernel-stable. Let $U,V \in \clopup(X)$. Since $\ker$ is monotone by \cref{lemma:kernel-monotone}, it suffices to show that $\ker U \cap \ker V \subseteq \ker(U \cap V)$. Let $x \in \ker U \cap \ker V$. Then there exist $U',V' \in \clopup(X)$ containing $x$ such that $U' \ll U$ and $V' \ll V$. By \cref{lemma-ll-iff-Scott-inbetween-2}, there are Scott upsets $F$, $G$ with $U' \subseteq F \subseteq U$ and $V' \subseteq G \subseteq V$. By \cref{thm:sfilt-ksat}, $F \cap Y$ and $G \cap Y$ are compact saturated. Since $Y$ is stably locally compact, $F \cap G \cap Y$ is compact saturated. Hence, $\upset (F \cap G \cap Y)$ is a Scott upset by \cref{thm:sfilt-ksat}. Moreover, because $F \cap G \subseteq U \cap V$, we have $\upset (F \cap G \cap Y) \subseteq \ker (U \cap V)$ by \cref{lemma:hp-kernels}. Therefore, since $X$ is L-spatial, $x\in U' \cap V' = \cl (U' \cap V' \cap Y) \subseteq \upset(F \cap G \cap Y) \subseteq \ker (U \cap V)$.
\end{proof}

\begin{corollary} \label{thm: st loc comp}
{\SCL} is equivalent to {\SLCSp}.
 \end{corollary}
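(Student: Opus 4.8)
The plan is to mirror the proof of \cref{thm: st cont}: since \SCL and \SLCSp are by definition full subcategories of \CL and \LCSob respectively, I would simply restrict the equivalence $\functor Y : \CL \to \LCSob$ established in \cref{thm:hl-spaces} to these full subcategories and verify that the restriction remains an equivalence. The essential tool is the object-level characterization in \cref{thm:shp-iff-stlc}. First I would check that $\functor Y$ maps \SCL into \SLCSp: if $X$ is a StCL-space, then $X$ is in particular a CL-space, hence L-spatial by \cref{thm:hp-implies-spatial}, so \cref{thm:shp-iff-stlc} applies and shows that its localic part $\functor Y(X)$ is stably locally compact; thus $\functor Y(X) \in \SLCSp$. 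On morphisms there is nothing to add, since \SCL inherits its morphisms from \CL and \SLCSp inherits its from \LCSob.

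Next I would establish essential surjectivity of the restricted functor. Given a stably locally compact space $Z$, essential surjectivity of $\functor Y : \CL \to \LCSob$ (\cref{thm:hl-spaces}) produces a CL-space $X$ with $\functor Y(X) \cong Z$ in \LCSob. As $Z$ is stably locally compact, so is the localic part of $X$; hence the converse direction of \cref{thm:shp-iff-stlc} forces $X$ to be a StCL-space, i.e.\ $X \in \SCL$. Therefore every object of \SLCSp is hit up to isomorphism by the restricted functor.

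Finally, fullness and faithfulness transfer for free: since both inclusions $\SCL \hookrightarrow \CL$ and $\SLCSp \hookrightarrow \LCSob$ are full, the relevant hom-sets coincide with those of the ambient categories, so the hom-set bijections furnished by the equivalence $\functor Y : \CL \to \LCSob$ restrict to hom-set bijections for $\functor Y : \SCL \to \SLCSp$. Combining essential surjectivity, fullness, and faithfulness then yields that $\functor Y : \SCL \to \SLCSp$ is an equivalence. I do not expect a genuine obstacle here: the entire difficulty has already been absorbed into \cref{thm:shp-iff-stlc}, and the only point requiring care is invoking its ``iff'' in both directions---once to see the functor lands in \SLCSp, and once via its converse to upgrade the preimage CL-space to a StCL-space for essential surjectivity.
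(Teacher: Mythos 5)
Your proposal is correct and follows essentially the same route as the paper, which proves this corollary by restricting the equivalence $\functor Y : \CL \to \LCSob$ of \cref{thm:hl-spaces} to the full subcategories \SCL and \SLCSp via the object-level characterization in \cref{thm:shp-iff-stlc}. Your write-up merely spells out the details the paper leaves implicit (applicability of \cref{thm:shp-iff-stlc} via L-spatiality of CL-spaces, both directions of the ``iff,'' and the transfer of fullness and faithfulness along full subcategory inclusions), all of which are handled correctly.
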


\begin{proof}
Restrict \cref{thm:hl-spaces} to the full subcategories {\SCL} and {\SLCSp} using \cref{thm:shp-iff-stlc}.
\end{proof}

As a consequence of \cref{thm: st cont,thm: st loc comp}, we obtain the following well-known duality for stably continuous frames (see \cref{SCFrm-deqv-SLCsp}):

\begin{corollary}
{\SCFrm} is dually equivalent to {\SLCSp}.
\end{corollary}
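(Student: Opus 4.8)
The plan is to obtain this as an immediate composition of the two preceding results, \cref{thm: st cont} and \cref{thm: st loc comp}, exactly as was done for Hofmann-Lawson duality at the end of \cref{sec: 4}. First I would invoke \cref{thm: st cont}, which supplies a dual equivalence between \SCFrm and \SCL realized by the (contravariant) restrictions of the functors $\functor X$ and $\functor D$. Then I would invoke \cref{thm: st loc comp}, which supplies a (covariant) equivalence between \SCL and \SLCSp realized by the restriction of the localic-part functor $\functor Y$.

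The only conceptual point is that composing a dual equivalence with an ordinary equivalence again yields a dual equivalence: the composite $\functor Y \functor X : \SCFrm \to \SLCSp$ is contravariant (being a covariant functor precomposed with a contravariant one), and it is part of an adjoint equivalence because composites of (adjoint) equivalences are again such. Concretely, the composite functors together with the composites of the respective units/counits exhibit \SCFrm and \SLCSp as dually equivalent. I do not expect any obstacle here, as this is a purely formal step once the two component results are in hand; the substantive work was already carried out in establishing \cref{thm:shp-iff-SCFrm} and \cref{thm:shp-iff-stlc}. Thus the corollary follows by combining \cref{thm: st cont,thm: st loc comp}.
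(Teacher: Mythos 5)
Your proposal is correct and matches the paper's own argument: the corollary is obtained precisely by composing the dual equivalence $\SCFrm \simeq \SCL$ of \cref{thm: st cont} with the equivalence $\SCL \simeq \SLCSp$ of \cref{thm: st loc comp}, the composition of a dual equivalence with an equivalence being again a dual equivalence. The paper treats this composition as immediate, just as you do.
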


We next turn our attention to stably compact frames.

\begin{lemma} \label{lemma:compactness}
    Let $L$ be a frame and $X_L$ its Priestley space. For $a \in L$, the following are equivalent.
\begin{enumerate}
    \item $a$ is compact.
    \item $\ker(a) = \varphi(a)$.
    \item $\varphi(a)$ is a Scott upset.
    \end{enumerate}
    In particular, $L$ is compact iff $X_L = \ker X_L$ iff $X_L$ is a Scott upset.
\end{lemma}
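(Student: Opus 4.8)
The plan is to prove the three-way equivalence for a fixed $a \in L$ and then read off the ``in particular'' statement by specializing to $a = 1$. I would first recall the defining facts I have available: $\ker(a) = \bigcup\{\varphi(c) \mid \varphi(c) \ll \varphi(a)\}$ is an open upset contained in $\varphi(a)$ by \cref{lemma:kernel open upset}, and $\varphi(a) \ll \varphi(a)$ iff $a \ll a$ iff $a$ is compact by \cref{lemma:ll-iffs-5} together with the definition of a compact element. These observations already connect (1) to the behavior of $\ker(a)$, so the bulk of the work is routing through them cleanly.

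For (1)$\Rightarrow$(2): if $a$ is compact, then $a \ll a$, so $\varphi(a) \ll \varphi(a)$ by \cref{lemma:ll-iffs-5}; hence $\varphi(a) \subseteq \ker(a)$ by \cref{lemma:ll-iffs}, and combined with the reverse inclusion $\ker(a) \subseteq \varphi(a)$ from \cref{lemma:kernel open upset} this gives $\ker(a) = \varphi(a)$. For (2)$\Rightarrow$(1): if $\ker(a) = \varphi(a)$, then $\varphi(a) \subseteq \ker(a)$, so $\varphi(a) \ll \varphi(a)$ by \cref{lemma:ll-iffs}, whence $a \ll a$ by \cref{lemma:ll-iffs-5}, i.e. $a$ is compact. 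So (1)$\Leftrightarrow$(2) is essentially immediate from the kernel machinery, with no real obstacle.

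The genuinely new content is the equivalence with (3), that $\varphi(a)$ is a Scott upset. For (3)$\Rightarrow$(2): a Scott upset $F$ satisfies ``$F \subseteq \cl W$ implies $F \subseteq W$ for every open upset $W$'' by \cref{lem: Scott upset}. Taking $F = \varphi(a)$, this is exactly the statement that $\varphi(a) \ll \varphi(a)$ (compare \cref{def: kernel}(1)), so $\varphi(a) \subseteq \ker(a)$ and hence equality. For (2)$\Rightarrow$(3): assuming $\ker(a) = \varphi(a)$, I want to verify the criterion of \cref{lem: Scott upset} directly. Let $W$ be an open upset with $\varphi(a) \subseteq \cl W$; then $\ker(a) \subseteq W$ by \cref{lemma:kernel-adjoint-of-closure}, and since $\ker(a) = \varphi(a)$ we conclude $\varphi(a) \subseteq W$. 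As $\varphi(a)$ is automatically a closed upset (it is a clopen upset), this shows $\varphi(a)$ is a Scott upset. The step I expect to require the most care is making sure I am applying \cref{lem: Scott upset} and \cref{lemma:kernel-adjoint-of-closure} in the correct direction and that $\varphi(a)$ genuinely qualifies as a closed upset so that \cref{lem: Scott upset} applies; this is where a sloppy argument could slip.

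Finally, for the ``in particular'' clause I specialize to $a = 1$, noting $\varphi(1) = X_L$ and that $L$ is compact precisely when $1$ is a compact element. Then (1)$\Leftrightarrow$(2) reads $L$ compact iff $\ker X_L = X_L$, and (1)$\Leftrightarrow$(3) reads $L$ compact iff $X_L$ is a Scott upset, giving the three stated equivalent conditions for the whole space.
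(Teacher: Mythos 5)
Your proof is correct and takes essentially the same route as the paper: the same ingredients (the kernel properties, \cref{lem: Scott upset}, and \cref{lemma:kernel-adjoint-of-closure}) drive the same two key observations, namely that compactness of $a$ amounts to $\varphi(a) \subseteq \ker(a)$ and that the Scott-upset criterion of \cref{lem: Scott upset} is precisely the definition of $\varphi(a) \ll \varphi(a)$. The only difference is organizational: the paper proves the cycle (1)$\Rightarrow$(2)$\Rightarrow$(3)$\Rightarrow$(1), closing it by applying \cref{lemma-ll-iff-Scott-inbetween-1}, whereas you prove the biconditionals (1)$\Leftrightarrow$(2) and (2)$\Leftrightarrow$(3) directly, which amounts to the same content.
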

\begin{proof}
    (1)$\Rightarrow$(2) 
    This follows from
    \namecref{lem: ker properties} \hyperref[lem: ker properties]{\labelcref{lem: ker properties}(1,5)}.
    
    (2)$\Rightarrow$(3) Suppose $\varphi(a) \subseteq \cl U$ for some open upset $U$ of $X_L$. Then $\varphi(a) = \ker(a) \subseteq U$ by \cref{lemma:kernel-adjoint-of-closure}. Therefore, $\varphi(a)$ is a Scott upset by \cref{lem: Scott upset}.
    
    (3)$\Rightarrow$(1) Since $\varphi(a)$ is a Scott upset, $\varphi(a) \ll \varphi(a)$ by \cref{lemma-ll-iff-Scott-inbetween-1}. Thus, $a \ll a$ by \cref{lemma:ll-iffs-5}. 
    
    The last statement follows from the above equivalence and the fact that $X_L=\varphi(1)$.
\end{proof}

\begin{remark}
    The equivalence (1)$\Leftrightarrow$(3) of \cref{lemma:compactness} is known (see, e.g., \cite[Cor.~5.4]{BezhanishviliMelzer2022}), and so is the fact that $L$ is compact iff $X_L$ is a Scott upset (see \cite[Thm.~3.5]{PultrSichler1988} or \cite[Lem.~3.1]{BezhanishviliGabelaiaJibladze2016}). To this \cref{lemma:compactness} adds a characterization in terms of kernels. 
\end{remark}

\begin{definition} \label{def: TPL}
Let $X$ be an L-space. 
\begin{enumerate}
    \item We call $X$ \emph{L-compact} if $X$ is a Scott upset.
    \item We call $X$ \emph{stably L-compact} or simply a \emph{StKL-space} if $X$ is an L-compact StCL-space.
    \item Let {\SKL} be the full subcategory of {\SCL} consisting of StKL-spaces.
\end{enumerate}
\end{definition}

As an immediate consequence of \cref{thm:shp-iff-SCFrm} and \cref{lemma:compactness}, we obtain:

\begin{theorem} \label{thm:shtp-iff-stk}
 Let $L$ be a frame and $X_L$ its Priestley space. Then $L$ is a stably compact frame iff $X_L$ is a StKL-space.
\end{theorem}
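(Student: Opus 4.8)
The plan is to decompose the definition of a stably compact frame into its two constituent conditions and translate each separately into the language of Priestley spaces, using results already in hand. By definition, $L$ is stably compact iff $L$ is compact and stably continuous, so it suffices to have a Priestley-space characterization of each of these two properties and then check that their conjunction matches the definition of a StKL-space.

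First I would invoke \cref{thm:shp-iff-SCFrm}, which asserts that $L$ is a stably continuous frame iff $X_L$ is a Scott-stable CL-space; by \cref{def: StCL} this is exactly the statement that $X_L$ is a StCL-space. Next I would apply the final clause of \cref{lemma:compactness}, which states that $L$ is compact iff $X_L$ is a Scott upset; by \cref{def: TPL}(1) this is precisely the condition that $X_L$ be L-compact. Chaining these equivalences together, $L$ is stably compact iff $X_L$ is simultaneously a StCL-space and L-compact, and by \cref{def: TPL}(2) this conjunction is verbatim the definition of a StKL-space.

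Since both characterizations refer to the same Priestley space $X_L$, the two equivalences may be conjoined without any compatibility check, and the chain of biconditionals closes up immediately. There is therefore no genuine obstacle: the content of the theorem is entirely carried by \cref{thm:shp-iff-SCFrm} and \cref{lemma:compactness}, and the only thing to verify is that the terminology introduced in \cref{def: StCL} and \cref{def: TPL} lines up with the conjunction ``compact and stably continuous,'' which it does by construction.
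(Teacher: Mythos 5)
Your proof is correct and is exactly the argument the paper intends: the paper derives \cref{thm:shtp-iff-stk} as an immediate consequence of \cref{thm:shp-iff-SCFrm} and \cref{lemma:compactness}, which is precisely your decomposition into the stably continuous part and the compactness part, matched against \cref{def: StCL} and \cref{def: TPL}. Nothing is missing.
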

This together with \cref{thm: st cont} yields:

\begin{corollary} \label{thm: st comp = tpl}
{\SKFrm} is dually equivalent to {\SKL}.
\end{corollary}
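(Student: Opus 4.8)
The plan is to obtain \cref{thm: st comp = tpl} as a routine restriction of the already-established dual equivalence \cref{thm: st cont} between \SCFrm and \SCL to appropriate full subcategories. By \cref{thm:shtp-iff-stk}, a frame $L$ is stably compact precisely when its Priestley space $X_L$ is a StKL-space. Since \SKFrm is the full subcategory of \SCFrm on stably compact frames (\cref{table:frames}) and \SKL is the full subcategory of \SCL on StKL-spaces (\cref{def: TPL}), the object-level correspondence is exactly what \cref{thm:shtp-iff-stk} supplies.

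First I would recall that \cref{thm: st cont} gives a dual equivalence between \SCFrm and \SCL implemented by (the relevant restrictions of) the functors $\functor X$ and $\functor D$, with the units $\varphi$ and $\varepsilon$ of Pultr-Sichler duality still serving as natural isomorphisms (see \cref{rem: units}). Since both \SKFrm and \SKL are \emph{full} subcategories, no new conditions on morphisms arise: every proper frame homomorphism between stably compact frames is a morphism of \SKFrm, and every proper L-morphism between StKL-spaces is a morphism of \SKL. Thus the only thing to verify is that the functors restrict correctly on objects, which is precisely the content of \cref{thm:shtp-iff-stk}: $\functor X$ sends a stably compact frame to a StKL-space and $\functor D$ sends a StKL-space to a stably compact frame.

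Concretely, I would argue as follows. Given $L\in\SKFrm$, \cref{thm:shtp-iff-stk} shows $\functor X(L)=X_L\in\SKL$, so $\functor X$ restricts to a functor $\SKFrm\to\SKL$. Conversely, given $X\in\SKL$, the frame $\functor D(X)=\clopup(X)$ is stably compact, again by \cref{thm:shtp-iff-stk} together with the isomorphism $\varepsilon:X\to\functor X\functor D(X)$ (so that $\functor D(X)$ has $X$, up to isomorphism, as its Priestley space). Hence $\functor D$ restricts to a functor $\SKL\to\SKFrm$. Since the units $\varphi$ and $\varepsilon$ restrict to natural isomorphisms on these full subcategories, the restricted functors form a dual equivalence.

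I do not anticipate any genuine obstacle here: the real mathematical work has already been done in \cref{thm:shtp-iff-stk} (which characterizes stable compactness dually) and in \cref{thm: st cont} (which establishes the ambient dual equivalence). The only mild subtlety worth stating explicitly is that fullness of both subcategories is what guarantees the morphism classes match, so that restricting the equivalence requires no separate argument at the level of arrows. Accordingly, the proof is a one-line appeal: \emph{Restrict \cref{thm: st cont} to the full subcategories \SKFrm and \SKL using \cref{thm:shtp-iff-stk}.}
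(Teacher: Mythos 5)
Your proposal is correct and matches the paper's own argument exactly: the paper derives this corollary by combining \cref{thm:shtp-iff-stk} with \cref{thm: st cont}, i.e., by restricting the dual equivalence between \SCFrm and \SCL to the full subcategories \SKFrm and \SKL. Your additional remarks about fullness and the restriction of the units $\varphi$ and $\varepsilon$ are a faithful elaboration of why such a restriction is routine, but the substance is the same.
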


To connect StKL-spaces with stably compact spaces, we need the following lemma.

\begin{lemma} \label{lemma:scott-iff-compact}
    Let $X$ be an L-space and $Y$ the localic part of $X$. If $X$ is L-compact, then $Y$ is compact. If in addition $X$ is L-spatial, then the converse also holds.
\end{lemma}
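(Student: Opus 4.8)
The plan is to read off both implications from the Hofmann--Mislove correspondence of \cref{thm:sfilt-ksat}, applied to $X$ viewed as the Priestley space of the frame $\clopup(X)$ (so that the localic part of $X$ plays the role of $Y_L$), exactly as is already done for general (S)L-spaces in the proof of \cref{thm:shp-iff-stlc}. The entire argument is then a transcription of the compact-saturated/Scott-upset dictionary, with density of $Y$ supplying the one nontrivial step.

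For the forward implication, I would argue as follows. Suppose $X$ is L-compact, so that $X$ itself is a Scott upset. Applying the isomorphism $F \mapsto F \cap Y$ of \cref{thm:sfilt-ksat} to $F = X$, the set $X \cap Y = Y$ is compact saturated in $Y$. Since a space is compact precisely when the whole space is a compact (saturated) subset of itself, this immediately gives that $Y$ is compact.

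For the converse, assume in addition that $X$ is L-spatial, so $Y$ is dense in $X$, and suppose $Y$ is compact. Then $Y$ is itself a compact saturated subset of $Y$, so by the inverse isomorphism $K \mapsto \upset K$ of \cref{thm:sfilt-ksat}, the set $\upset Y$ is a Scott upset of $X$; in particular it is a closed upset. The key step is now to show $\upset Y = X$: since $\upset Y$ is closed and contains $Y$, and since $Y$ is dense in $X$, we have $X = \cl Y \subseteq \cl(\upset Y) = \upset Y \subseteq X$, whence $\upset Y = X$. Therefore $X$ is a Scott upset, i.e.\ $X$ is L-compact.

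The only subtle point, and the place where I expect the real work to sit, is the identity $\upset Y = X$ in the converse direction: this is precisely where L-spatiality (density of $Y$) is invoked, combined with the fact---guaranteed by \cref{thm:sfilt-ksat}---that a Scott upset such as $\upset Y$ is closed. Everything else reduces to recognizing $Y$ and $X$ as the distinguished (whole-space) elements on the two sides of the correspondence.
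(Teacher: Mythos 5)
Your proof is correct, but it takes a genuinely different route from the paper's. The paper argues elementarily: for the forward direction it lifts an open cover of $Y$ to clopen upsets of $X$, uses $\min(X)\subseteq Y$ together with $X=\upset\min(X)$ and compactness of the Priestley space $X$ to extract a finite subcover; for the converse it argues contrapositively, picking $x\in\min(X)\setminus Y$ and building from the open upset $\{x\}^c$ a cover of $Y$ with no finite subcover, with density of $Y$ producing the contradiction. You instead treat both implications as formal consequences of the Hofmann--Mislove correspondence (Lemma~\ref{thm:sfilt-ksat}): $X$ L-compact means $X$ is a Scott upset, so $X\cap Y=Y$ is compact saturated; conversely, if $Y$ is compact then $\upset Y$ is a Scott upset, hence closed, and density of $Y$ forces $\upset Y=X$. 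Your application of Lemma~\ref{thm:sfilt-ksat} to an arbitrary L-space (via $L=\clopup(X)$) is legitimate and is exactly how the paper itself uses that lemma in Theorem~\ref{thm:shp-iff-stlc}, and no circularity arises since the lemma is imported from prior work. What your approach buys is brevity and a conceptual reading---L-compactness of $X$ is literally the transcription of compactness of $Y$ under the Scott-upset/compact-saturated dictionary; what the paper's approach buys is self-containedness, since it relies only on basic Priestley-space facts (Lemma~\ref{lemma:properties-Priestley}) rather than on the full strength of the Hofmann--Mislove theorem.
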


\begin{proof}
    Suppose $X$ is L-compact. Let $\mathcal U$ be an open cover of $Y$. For each $U' \in \mathcal U$ there is a clopen upset $U$ of $X$ such that $U \cap Y = U'$. Since $X$ is L-compact, \[\min(X) \subseteq Y \subseteq \bigcup\{ U \mid U' \in \mathcal U \}.\] Therefore, $X = \upset(\min X) \subseteq \bigcup\{ U \mid U' \in \mathcal U \}$ because the latter is an upset. Since $X$ is compact, there are $U_1',\dots,U_n' \in \mathcal U$ such that $X \subseteq U_1 \cup \dots \cup U_n$. But then we have $Y \subseteq (U_1 \cup \dots \cup U_n) \cap Y = U'_1 \cup \dots \cup U'_n$. Thus, $Y$ is compact. 

    Conversely, suppose $X$ is not L-compact, so there is $x \in \min(X) \setminus Y$. Then $\downset x = \{x\}$ is not open. Hence, $U = \{x\}^c$ is an open upset such that $x \in \cl U$. Since $U$ is an open upset, $U = \bigcup \{V \in \clopup(X) \mid x \not \in V\}$ by \cref{lemma:properties-Priestley-1}. 
    Therefore, \[X = \cl U = \cl\bigcup\{V \in \clopup(X) \mid x \not \in V\},\] and hence $Y \subseteq \bigcup\{V \in \clopup(X) \mid x \not \in V\}$ by \cref{cor:y-pulls-closure-1}. If $Y$ were compact, there would exist a clopen upset $V$ such that $x \not \in V$ and $Y \subseteq V$. Since $X$ is L-spatial, this would imply $x\in X = \cl Y \subseteq V$, a contradiction. Thus, $Y$ is not compact.
\end{proof}

\begin{theorem} \label{thm:shp-iff-stcsp}
Let $X$ be an SL-space and $Y$ its localic part. Then $X$ is a StKL-space iff $Y$ is stably compact.  
\end{theorem}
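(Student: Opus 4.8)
The plan is to reduce the statement to the two immediately preceding results, \cref{thm:shp-iff-stlc} and \cref{lemma:scott-iff-compact}, by unfolding the definition of a StKL-space. Recall from \cref{def: TPL} that a StKL-space is precisely an L-compact StCL-space, while a stably compact space is precisely a compact stably locally compact space. Thus each side of the desired biconditional factors as a conjunction of two conditions, and I would establish the equivalence by matching these conjuncts up one at a time.

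First I would handle the stable-continuity part. By \cref{thm:shp-iff-stlc}, an SL-space $X$ is a StCL-space iff its localic part $Y$ is stably locally compact; this pairs the StCL-condition on $X$ with the stably-locally-compact condition on $Y$. Next I would handle compactness. Since $X$ is assumed to be an SL-space, it is in particular L-spatial, so the full biconditional in \cref{lemma:scott-iff-compact} is available: $X$ is L-compact iff $Y$ is compact. It is worth noting that the converse implication in \cref{lemma:scott-iff-compact} is exactly the place where L-spatiality is used, which is why the hypothesis that $X$ is an SL-space rather than merely an L-space is essential for the backward direction of the theorem.

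Combining the two equivalences, $X$ is a StKL-space iff $X$ is both L-compact and a StCL-space, iff $Y$ is both compact and stably locally compact, iff $Y$ is stably compact. I do not anticipate a genuine obstacle: the mathematical content is entirely carried by the two cited results, and the only point requiring care is to invoke \cref{lemma:scott-iff-compact} in its bidirectional form, which is licensed precisely because SL-spaces are L-spatial.
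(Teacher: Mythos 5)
Your proposal is correct and coincides with the paper's own proof, which likewise combines \cref{thm:shp-iff-stlc} with \cref{lemma:scott-iff-compact} after unfolding the definitions of StKL-space and stably compact space. Your observation that L-spatiality of $X$ is what licenses the bidirectional use of \cref{lemma:scott-iff-compact} is exactly the point implicit in the paper's appeal to the SL-space hypothesis.
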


\begin{proof}
Apply \cref{thm:shp-iff-stlc,lemma:scott-iff-compact}. 
\end{proof}

\begin{corollary} \label{thm: tpl = stk}
{\SKL} is equivalent to {\SKSp}.
\end{corollary}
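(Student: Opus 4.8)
The plan is to restrict the equivalence $\functor Y : \SCL \to \SLCSp$ established in \cref{thm: st loc comp} to the relevant full subcategories, exactly as was done to pass from \cref{thm:hl-spaces} to \cref{thm: st loc comp}. By \cref{def: TPL}, \SKL is the full subcategory of \SCL consisting of StKL-spaces, and \SKSp is the full subcategory of \SLCSp consisting of stably compact spaces. The single ingredient that makes the restriction work is \cref{thm:shp-iff-stcsp}, which characterizes, for an SL-space $X$ with localic part $Y$, precisely when $X$ is a StKL-space in terms of $Y$ being stably compact.

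First I would note that every StCL-space is an SL-space, since a CL-space is L-spatial by \cref{thm:hp-implies-spatial}; hence \cref{thm:shp-iff-stcsp} applies to every object of \SCL. The forward direction of \cref{thm:shp-iff-stcsp} then shows that $\functor Y$ carries an object of \SKL to an object of \SKSp, so the restriction $\functor Y : \SKL \to \SKSp$ is well defined on objects (and on morphisms, since both subcategories are full). For essential surjectivity, given a stably compact space $Z$, essential surjectivity of $\functor Y : \SCL \to \SLCSp$ yields a StCL-space $X$ with $\functor Y(X) \cong Z$; the converse direction of \cref{thm:shp-iff-stcsp} then guarantees that $X$ is in fact a StKL-space, so $Z$ lies in the essential image of the restricted functor.

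Finally, fullness and faithfulness are inherited immediately: because \SKL and \SKSp are full subcategories of \SCL and \SLCSp, and because $\functor Y$ is full and faithful on the ambient categories by \cref{thm: st loc comp}, the restriction remains full and faithful. An essentially surjective, full, faithful functor is an equivalence (see, e.g., \cite[p.~93]{MacLane1998}), which gives the claim.

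There is essentially no obstacle at this stage: all the genuine work has already been absorbed into \cref{thm:shp-iff-stcsp} (and, through it, into the Hofmann--Mislove correspondence of \cref{thm:sfilt-ksat} together with \cref{lemma:scott-iff-compact}). The only point requiring both implications of \cref{thm:shp-iff-stcsp} rather than just one is essential surjectivity, where the converse direction is what lets us transport a stably compact space back to a genuine StKL-space. Thus the proof reduces to the one-line instruction to restrict \cref{thm: st loc comp} to \SKL and \SKSp via \cref{thm:shp-iff-stcsp}.
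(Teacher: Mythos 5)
Your proof is correct and takes essentially the same route as the paper: the paper's proof is literally ``Apply \cref{thm: st loc comp,thm:shp-iff-stcsp},'' i.e., restrict the equivalence $\functor Y : \SCL \to \SLCSp$ to the full subcategories \SKL and \SKSp using the object-level characterization in \cref{thm:shp-iff-stcsp}. Your write-up merely spells out the standard details (well-definedness on objects, essential surjectivity via the converse direction, and inheritance of fullness and faithfulness by full subcategories) that the paper leaves implicit.
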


\begin{proof}
Apply \cref{thm: st loc comp,thm:shp-iff-stcsp}.
\end{proof}

As a consequence of \cref{thm: st comp = tpl,thm: tpl = stk}, we obtain the following well-known duality for stably compact frames (see \cref{SKFrm-deqv-SKsp}):

\begin{corollary}
{\SKFrm} is dually equivalent to {\SKSp}.
\end{corollary}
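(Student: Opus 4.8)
The plan is to obtain this duality purely formally, by composing the two categorical results already established for stably compact objects. By \cref{thm: st comp = tpl}, the restrictions of the Priestley functors $\functor X$ and $\functor D$ give a dual equivalence between \SKFrm and \SKL. By \cref{thm: tpl = stk}, the restriction of the localic-part functor $\functor Y$ (together with its quasi-inverse) gives a covariant equivalence between \SKL and \SKSp. Since composing a contravariant equivalence with a covariant equivalence again yields a contravariant equivalence, the composite $\functor Y \functor X : \SKFrm \to \SKSp$ is a dual equivalence.

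Concretely, I would first invoke \cref{thm: st comp = tpl} to fix the contravariant functor $\functor X : \SKFrm \to \SKL$, its quasi-inverse $\functor D$, and the natural isomorphisms $\varphi$ and $\varepsilon$ witnessing the dual equivalence. I would then invoke \cref{thm: tpl = stk} to fix the covariant functor $\functor Y : \SKL \to \SKSp$ and its quasi-inverse, together with their witnessing natural isomorphisms. Forming the composite contravariant functor $\functor Y \functor X$, one checks that the relevant whiskered composites of these two pairs of natural isomorphisms are again natural isomorphisms, so that $\functor Y \functor X$ is part of a dual equivalence between \SKFrm and \SKSp.

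Since both component results have already been proved in full, there is essentially no obstacle: the only thing to verify is the elementary fact that the composition of a contravariant equivalence with a covariant equivalence is again a contravariant equivalence, which is immediate from the standard characterization of equivalences by essential surjectivity, fullness, and faithfulness (see, e.g., \cite[p.~93]{MacLane1998}). With \SKL serving as the common intermediary, the conclusion follows at once, mirroring exactly the pattern by which Hofmann-Lawson duality was derived from \cref{thm:hl-frames,thm:hl-spaces}.
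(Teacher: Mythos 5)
Your proof is correct and follows exactly the paper's route: the paper likewise obtains this corollary by composing the dual equivalence between \SKFrm and \SKL (\cref{thm: st comp = tpl}) with the equivalence between \SKL and \SKSp (\cref{thm: tpl = stk}). The only difference is that you spell out the routine verification that such a composite is again a dual equivalence, which the paper leaves implicit.
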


\section{Deriving Isbell duality} \label{sec: 6}

A characterization of the well inside relation $\prec$ on a frame $L$ in the language of the Priestley space of $L$ was given in \cite[Sec.~3]{BezhanishviliGabelaiaJibladze2016}. Similar to the notion of the kernel of a clopen upset $U$, which we introduced in \cref{sec: 3}, the well inside relation induces the notion of the regular part of $U$.

\begin{definition}
    Let $X$ be an L-space. For $U,V\in\clopup(X)$ we write $U \prec V$ provided $\downset U \subseteq V$. Define the \emph{regular part} of $U$ as 
    \[
    \reg U = \bigcup\{V \in \clopup \mid V \prec U\}.
    \] 
    When $U = \varphi(a)$, we write $\reg(a)$ for $\reg U$.
\end{definition}

This definition is motivated by the following: 

\begin{lemma} [{\cite[Sec.~3]{BezhanishviliGabelaiaJibladze2016}}] \label{lem: well inside}
   Let $X$ be an L-space. For each $U\in\clopup(X)$ we have 
   \[
   \reg U = X\setminus \downset \upset (X \setminus U).
   \] 
In particular, if $X$ is the Priestley space of a frame $L$, then for $a,b \in L$ we have:
\[
a \prec b \mbox{ iff } 
\varphi(a) \prec \varphi(b) \mbox{ iff } \varphi(a) \subseteq \reg(b).
\] 
\end{lemma}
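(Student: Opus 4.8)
The plan is to prove the three statements in order, establishing the displayed formula for $\reg U$ first and then deriving the chain of equivalences about $a \prec b$ from it together with earlier results. The key observation is that the definition $U \prec V$ iff $\downset U \subseteq V$ should be unpacked by taking complements, so I would begin by computing $\reg U$ directly from its definition as a union of clopen upsets $V$ with $\downset V \subseteq U$.

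First I would establish the formula $\reg U = X \setminus \downset\upset(X \setminus U)$. For the inclusion $\subseteq$, take $x \in \reg U$, so $x \in V$ for some clopen upset $V$ with $\downset V \subseteq U$; this means $V \cap (X \setminus U) = \varnothing$ after passing to downsets, and I would argue $x \notin \downset\upset(X\setminus U)$ by noting that $V$ is a clopen upset disjoint from $X \setminus U$, hence disjoint from $\upset(X \setminus U)$, and since $V$ is an upset its complement is a downset containing $\downset\upset(X\setminus U)$. For the reverse inclusion, take $x \notin \downset\upset(X\setminus U)$; I would use the Priestley separation axiom together with \cref{lemma:properties-Priestley-2.5} (which guarantees $\downset\upset(X\setminus U)$ is closed, since $X \setminus U$ is closed) to produce a clopen upset $V$ with $x \in V$ and $V$ disjoint from the closed downset $\downset\upset(X\setminus U)$. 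The disjointness should give $\downset V \subseteq U$, placing $x \in \reg U$.

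Next I would prove the equivalences $a \prec b$ iff $\varphi(a) \prec \varphi(b)$ iff $\varphi(a) \subseteq \reg(b)$. The last equivalence $\varphi(a) \prec \varphi(b)$ iff $\varphi(a) \subseteq \reg(b)$ follows the same pattern as \cref{lemma:ll-iffs} for kernels: the right-to-left direction uses that $\varphi(a)$ is a clopen upset witnessing membership, and the left-to-right direction uses compactness of $\varphi(a)$ to absorb it into the union defining $\reg(b)$ (here $\prec$ behaves well since $\varphi(a) \prec \varphi(b)$ means $\varphi(a)$ is one of the clopen upsets in the union). For $a \prec b$ iff $\varphi(a) \prec \varphi(b)$, I would translate $a \prec b$, i.e.\ $a^* \vee b = 1$, into Priestley-space language via \cref{lemma:joins-in-priestley}: the join condition becomes $\cl(\varphi(a^*) \cup \varphi(b)) = X$, and I would identify $\varphi(a^*)$ with the complement of $\cl\varphi(a)$ or with $X \setminus \downset\upset(\ldots)$ using the fact that pseudocomplement corresponds to the Heyting/regularity structure on clopen upsets. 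The condition $\downset\varphi(a) \subseteq \varphi(b)$ should emerge as exactly the reformulation of $a^* \vee b = 1$.

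The main obstacle I expect is correctly matching the pseudocomplement $a^*$ with the appropriate Priestley-space operation. Since $X_L$ is an Esakia space, $\varphi(a^*)$ is the Heyting negation of $\varphi(a)$ in the frame $\clopup(X_L)$, which has the form $\int(X \setminus \downset\varphi(a))$ or equivalently the largest clopen upset disjoint from $\downset\varphi(a)$; getting the interior-closure bookkeeping exactly right, and confirming that $a^* \vee b = 1$ translates precisely to $\downset\varphi(a) \subseteq \varphi(b)$, is the delicate step. I would lean on \cref{lemma:properties-Priestley-2.5} and \cref{lem:esakia-int-downsets} to handle the interactions between $\downset$, $\upset$, closure, and interior, and on \cref{lemma:joins-in-priestley} to convert the frame-theoretic join $a^* \vee b = 1$ into the topological statement. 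Once the formula for $\reg U$ is in hand, the remaining equivalences are essentially formal manipulations mirroring the kernel case.
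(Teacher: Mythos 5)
The paper itself gives no proof of this lemma---it is imported wholesale from \cite[Sec.~3]{BezhanishviliGabelaiaJibladze2016}---so there is no internal argument to compare against; your proposal in effect supplies the proof that the citation delegates, and it is correct. Both halves of the displayed identity go through as you sketch them: a clopen upset $V$ with $\downset V \subseteq U$ is disjoint from $\upset(X\setminus U)$, so its complement, being a downset, absorbs $\downset\upset(X\setminus U)$; conversely, since $X\setminus U$ is closed, \cref{lemma:properties-Priestley-2.5} makes $\downset\upset(X\setminus U)$ a closed downset, and separating $x$ from it by a clopen upset $V$ (via \cref{lemma:properties-Priestley-2}) forces $\downset V \subseteq U$, so $x \in \reg U$. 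For the equivalence $\varphi(a)\prec\varphi(b) \Leftrightarrow \varphi(a)\subseteq\reg(b)$, your compactness argument works and is even smoother than in \cref{lemma:ll-iffs}, because $\downset$ commutes with finite unions; in fact, once the displayed formula is in hand you can skip compactness altogether, since $\varphi(a)\subseteq X\setminus\downset\upset(X\setminus\varphi(b))$ unwinds directly to $\downset\varphi(a)\subseteq\varphi(b)$ using only that $\varphi(a)$ is an upset. The genuinely delicate step is, as you anticipate, the identification of $\varphi(a^*)$. Your first guess, ``the complement of $\cl\varphi(a)$,'' is wrong: $\varphi(a)$ is clopen, so that set is $X\setminus\varphi(a)$, a clopen \emph{downset}, not the pseudocomplement. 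But your final paragraph lands on the correct formula: $\varphi(a^*)$ is the Esakia negation $X\setminus\downset\varphi(a)$, equivalently the largest clopen upset disjoint from $\downset\varphi(a)$ (the interior operator in $\int(X\setminus\downset\varphi(a))$ is redundant since $\downset\varphi(a)$ is clopen in an Esakia space). With that identification and the fact that $\varphi$ preserves finite joins as unions, the chain $a\prec b \Leftrightarrow a^*\vee b = 1 \Leftrightarrow (X\setminus\downset\varphi(a))\cup\varphi(b) = X \Leftrightarrow \downset\varphi(a)\subseteq\varphi(b) \Leftrightarrow \varphi(a)\prec\varphi(b)$ closes the argument, so the proof stands.
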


Consequently, a frame $L$ is regular iff in its Priestley space the regular part of each clopen upset $U$ is dense in $U$ (see \cite[Lem.~3.6]{BezhanishviliGabelaiaJibladze2016}). We will give several equivalent characterizations for this condition 
in \cref{lemma:regularity}. For this we need the following: 

\begin{lemma} \label{cor:containment-reg}
Let $X$ be an L-space, $x \in X$, $Z \subseteq X$, and $U \in \clopup(X)$. 
\begin{enumerate}[ref=\thelemma(\arabic*)]
\item $x \in \reg U$ iff $\downset \upset x \subseteq U$. \label[lemma]{cor:containment-reg-1}
\item $Z \subseteq \reg U$ iff $\downset \upset Z \subseteq U$ \label[lemma]{cor:containment-reg-2}
\end{enumerate}
    
\end{lemma}

\begin{proof}
    (1) By Lemma~\ref{lem: well inside},
    \begin{align*}
            x \in \reg U &\iff x \notin \downset \upset (X\setminus U) \iff \upset x \cap \upset(X\setminus U) = \varnothing \\
            &\iff \downset \upset x \cap (X \setminus U) = \varnothing \iff \downset \upset x \subseteq U.
    \end{align*}

    (2) This follows from (1) since $\downset \upset Z = \bigcup \{ \downset \upset x \mid x \in Z \}$.
\end{proof}

\begin{lemma} \label{lemma:regularity}
    Let $X$ be an L-space, $Y$ the localic part of $X$, and $U \in \clopup(X)$. The following three conditions are equivalent. 
    \begin{enumerate}[ref=\thelemma(\arabic*)]
        \item $U \cap Y \subseteq \reg U$.
        \item for each $y \in U \cap Y$ there are disjoint clopen upsets $V, W$ such that $y \in V$ and $U^c \subseteq W$. \label[lemma]{lemma:regularity-2}
        \item $\downset \upset (U \cap Y) \subseteq U$.
    \end{enumerate}
    Moreover, the following condition implies conditions {\upshape (1)--(3)}.
    \begin{enumerate}[resume]
        \item $\reg U$ is dense in $U$.
    \end{enumerate}
    Furthermore, if $X$ is L-spatial, all four conditions are equivalent.

\end{lemma}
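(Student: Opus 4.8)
The plan is to establish the three-way equivalence (1)$\Leftrightarrow$(2)$\Leftrightarrow$(3) first, then prove (4)$\Rightarrow$(1), and finally close the loop by showing (1)$\Rightarrow$(4) under the L-spatiality hypothesis. The equivalence (1)$\Leftrightarrow$(3) is immediate: applying \cref{cor:containment-reg-2} with $Z = U \cap Y$ gives $U \cap Y \subseteq \reg U$ iff $\downset \upset (U \cap Y) \subseteq U$, which is verbatim the content of (1) and (3).

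For (1)$\Leftrightarrow$(2) I would unwind the definition $\reg U = \bigcup\{V \in \clopup(X) \mid V \prec U\}$, recalling that $V \prec U$ means $\downset V \subseteq U$. For (1)$\Rightarrow$(2), given $y \in U \cap Y \subseteq \reg U$, I pick a clopen upset $V$ with $y \in V$ and $\downset V \subseteq U$; since $X$ is an Esakia space, $\downset V$ is clopen, so $W := (\downset V)^c$ is a clopen upset with $U^c \subseteq W$ (because $\downset V \subseteq U$) and $V \cap W = \varnothing$ (because $V \subseteq \downset V$). For (2)$\Rightarrow$(1), from disjoint clopen upsets $V, W$ with $y \in V$ and $U^c \subseteq W$, I note $V \subseteq W^c$, and since $W^c$ is a downset, $\downset V \subseteq W^c \subseteq U$, giving $V \prec U$ and hence $y \in \reg U$. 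This translation between the Hausdorff-style separation condition (2) and the order-theoretic relation $\prec$ is the one place that genuinely uses the Esakia structure rather than formal manipulation of earlier lemmas, so I expect it to be the main (if modest) obstacle.

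For (4)$\Rightarrow$(1) the key observation is that $\reg U$ is an \emph{open upset} (being a union of clopen upsets) contained in $U$; this is what makes \cref{cor:y-pulls-closure-1} applicable to $\reg U$. Density of $\reg U$ in $U$ together with $\reg U \subseteq U$ and $U$ closed gives $\cl(\reg U) = U$, and then \cref{cor:y-pulls-closure-1} yields $U \cap Y = \cl(\reg U) \cap Y = \reg U \cap Y \subseteq \reg U$, which is (1). Recognizing $\reg U$ as the right open upset to feed into the pull-closure lemma is the only nonroutine point here.

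Finally, assuming $X$ is L-spatial, I would prove (1)$\Rightarrow$(4). Density of $Y$ in $X$ forces $U \cap Y$ to be dense in the open set $U$ (every nonempty open subset of $U$ meets $Y$, hence meets $U \cap Y$), so $\cl(U \cap Y) = U$. Combining this with the inclusions $U \cap Y \subseteq \reg U \subseteq U$ and taking closures gives $U = \cl(U \cap Y) \subseteq \cl(\reg U) \subseteq \cl U = U$, whence $\cl(\reg U) = U$, i.e.\ $\reg U$ is dense in $U$. This establishes (4) and completes the equivalence of all four conditions in the L-spatial case.
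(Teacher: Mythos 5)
Your proof is correct and follows essentially the same route as the paper: the same Esakia-space argument (clopenness of $\downset V$, complements of upsets being downsets) drives (1)$\Leftrightarrow$(2), \cref{cor:containment-reg-2} handles the link with (3), and the same density arguments handle (4). The only differences are organizational: you arrange the first three conditions hub-and-spoke around (1) instead of the paper's cycle (1)$\Rightarrow$(2)$\Rightarrow$(3)$\Rightarrow$(1), and for (4)$\Rightarrow$(1) you invoke \cref{cor:y-pulls-closure-1} where the paper inlines the identical one-line argument.
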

\begin{proof}
    (1)$\Rightarrow$(2) Let $y \in U \cap Y$. By (1), $y \in \reg U$. Hence, there is a clopen upset $V$ such that $y\in V$ and $\downset V \subseteq U$. Since $X$ is an Esakia space, $\downset V$ is a clopen downset. Therefore, $W := (\downset V)^c$ is a clopen upset disjoint from $V$ with $U^c \subseteq W$.

    (2)$\Rightarrow$(3) Let $x \in \downset \upset (U \cap Y)$. Then there is $y \in U \cap Y$ such that $x \in \downset \upset y$. By (2), there are disjoint clopen upsets $V,W$ such that $y \in V$ and $U^c \subseteq W$. Thus, $x \in \downset \upset y \subseteq \downset V \subseteq W^c \subseteq U$.
    
    (3)$\Rightarrow$(1) Apply \cref{cor:containment-reg-2}.
    
    Therefore, conditions (1)--(3) are equivalent.    
    
    (4)$\Rightarrow$(1) Let $y \in U \cap Y$. Then $y \in \cl\reg U$ by (4). Since $\downset y$ is open and $\reg U$ is an upset, we conclude that $y \in \reg U$. 
    
    Let $X$ be L-spatial.
    
    (1)$\Rightarrow$(4) From $U \cap Y \subseteq \reg U$ it follows that $\cl(U \cap Y) \subseteq \cl\reg U$. But $\cl(U \cap Y) = U$ since $X$ is L-spatial. Thus, $\reg U$ is dense in $U$.
\end{proof}
\begin{remark}
    Compare \cref{lemma:regularity-2} to the usual definition of regularity in topological spaces.
\end{remark}

\begin{definition} \label{def: regular}
Let $X$ be an L-space. 
    \begin{enumerate}
        \item A clopen upset $U$ of $X$ is \emph{L-regular} if $\reg U$ is dense in $U$.
        \item $X$ is \emph{L-regular} if all its clopen upsets are L-regular.
        \item $X$ is a \emph{KRL-space} if $X$ is L-compact and L-regular.
\end{enumerate}
\end{definition}

Let {\KRL} be the full subcategory of {\LPries} consisting of KRL-spaces. The next theorem goes back \cite[Sec.~3]{BezhanishviliGabelaiaJibladze2016} (see also \cite[Sec.~3]{PultrSichler1988}).

\begin{theorem} 
Let $L$ be a frame and $X_L$ its Priestley space.
\begin{enumerate}[ref=\thetheorem(\arabic*)]
\item {\upshape ({\cite[Lem.~3.6]{BezhanishviliGabelaiaJibladze2016}})} $L$ is regular iff $X_L$ is L-regular. \label[lemma]{thm: regular is rl}
\item {\upshape ({\cite[Thm.~3.9]{BezhanishviliGabelaiaJibladze2016}})} $L$ is compact regular iff $X_L$ is a KRL-space. \label[lemma]{thm: compact regular}
\end{enumerate}
\end{theorem}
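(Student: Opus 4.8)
The plan is to prove (1) by mirroring the argument used for \cref{thm:packed-iff-continuous}, replacing the pair $(\ker,\ll)$ with $(\reg,\prec)$, and then to deduce (2) by combining (1) with the compactness characterization recorded in \cref{lemma:compactness}.

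For (1), the key observation is that since $\functor D(X_L)=\clopup(X_L)$ is isomorphic to $L$ via $\varphi$, the clopen upsets of $X_L$ are precisely the sets $\varphi(b)$ for $b\in L$. Using this together with \cref{lem: well inside}, I would first record that
\[
\bigcup\{\varphi(b)\mid b\prec a\}=\bigcup\{V\in\clopup(X_L)\mid V\prec\varphi(a)\}=\reg\varphi(a)=\reg(a),
\]
since $b\prec a$ iff $\varphi(b)\prec\varphi(a)$. Applying \cref{lemma:joins-in-priestley} then gives
\[
\varphi\left(\bigvee\{b\in L\mid b\prec a\}\right)=\cl\left(\bigcup\{\varphi(b)\mid b\prec a\}\right)=\cl\reg(a).
\]
Because $\varphi$ is an order-embedding (hence injective) and $\bigvee\{b\mid b\prec a\}\le a$ always holds, it follows that $a=\bigvee\{b\mid b\prec a\}$ iff $\varphi(a)=\cl\reg(a)$, that is, iff $\reg\varphi(a)$ is dense in $\varphi(a)$, which is exactly the condition that $\varphi(a)$ is L-regular. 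Quantifying over all $a\in L$ — equivalently, over all clopen upsets of $X_L$ — yields that $L$ is regular iff every clopen upset of $X_L$ is L-regular, i.e.\ iff $X_L$ is L-regular.

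For (2), I would simply assemble the pieces. By \cref{def: regular}, a KRL-space is an L-space that is both L-compact and L-regular. Part (1) gives that $L$ is regular iff $X_L$ is L-regular. For the compactness half, \cref{lemma:compactness} states that $L$ is compact iff $X_L$ is a Scott upset, and by \cref{def: TPL} this is precisely the condition that $X_L$ is L-compact. Combining these, $L$ is compact regular iff $X_L$ is both L-compact and L-regular, i.e.\ iff $X_L$ is a KRL-space.

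I do not expect a serious obstacle here, since all the necessary translations — between $\prec$ and $\reg$ via \cref{lem: well inside}, and between joins and closures via \cref{lemma:joins-in-priestley} — are already available. The one point requiring care is the identification $\bigcup\{\varphi(b)\mid b\prec a\}=\reg(a)$, which rests on the fact that \emph{every} clopen upset of $X_L$ has the form $\varphi(b)$, so that the union indexed by $b\prec a$ ranges over exactly the clopen upsets well inside $\varphi(a)$; the remainder is the density reformulation already used in the continuous case.
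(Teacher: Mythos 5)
Your proof is correct, but note that the paper itself does not prove this theorem: both parts are imported from \cite{BezhanishviliGabelaiaJibladze2016} (Lem.~3.6 and Thm.~3.9), just like \cref{lem: well inside}. So what you have produced is not a rephrasing of the paper's argument but a self-contained internal derivation, obtained by running the paper's own proof of \cref{thm:packed-iff-continuous} with the pair $(\reg,\prec)$ in place of $(\ker,\ll)$. The key identification $\reg(a)=\bigcup\{\varphi(b)\mid b\prec a\}$ is valid exactly as you say: $\varphi$ is an isomorphism of $L$ onto $\clopup(X_L)$, so the clopen upsets well inside $\varphi(a)$ are precisely the sets $\varphi(b)$ with $\varphi(b)\prec\varphi(a)$, which by \cref{lem: well inside} are precisely those with $b\prec a$; this plays the role that \cref{lemma:ll-iffs-5} plays in the continuous case, and \cref{lemma:joins-in-priestley} then converts the join into a closure, giving $L$ regular iff every clopen upset of $X_L$ is L-regular. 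Your reduction of (2) to (1) plus \cref{lemma:compactness} is also sound and non-circular, since \cref{lemma:compactness} is established in Section 5 without any appeal to regularity, and L-compactness of $X_L$ is by definition the condition that $X_L$ is a Scott upset. What your route buys is that the theorem becomes a corollary of machinery already developed in the paper rather than an external citation; what the citation buys the authors is brevity and credit to the original source. One minor remark: the inequality $\bigvee\{b\in L\mid b\prec a\}\le a$ that you invoke is true (since $b\prec a$ forces $b=b\wedge a$) but unnecessary --- injectivity of $\varphi$ alone yields the equivalence $a=\bigvee\{b\mid b\prec a\}$ iff $\varphi(a)=\cl\reg(a)$.
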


\begin{corollary} \label{KRFrm=RPL}
    {\KRFrm} is dually equivalent to {\KRL}.
\end{corollary}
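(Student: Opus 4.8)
The plan is to derive \cref{KRFrm=RPL} in exactly the same way that \cref{thm:hl-frames} was derived: by restricting the Pultr-Sichler duality of \cref{thm: PS} to the appropriate subcategories. The present situation is in fact simpler than the one for Hofmann-Lawson duality, because both \KRFrm and \KRL are \emph{full} subcategories, of \Frm and \LPries respectively, so their morphisms are inherited without modification and no analogue of the properness condition on L-morphisms is required. Thus only the behaviour of the two functors on objects needs to be checked.

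First I would verify that the contravariant functors $\functor X$ and $\functor D$ establishing Pultr-Sichler duality (see \cref{rem: units}) restrict to the subcategories in question. In one direction, if $L$ is a compact regular frame, then by \cref{thm: compact regular} its Priestley space $\functor X(L) = X_L$ is a KRL-space, so $\functor X$ restricts to a functor from \KRFrm to \KRL. In the other direction, let $X$ be a KRL-space and set $L = \functor D(X)$. Since $X$ is an L-space, $L$ is a frame, and the unit $\varepsilon : X \to \functor X \functor D(X) = X_L$ is an isomorphism of L-spaces. As L-compactness and L-regularity are invariant under isomorphisms of L-spaces, $X_L$ is again a KRL-space, and hence $L$ is compact regular by \cref{thm: compact regular}. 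Therefore $\functor D$ restricts to a functor from \KRL to \KRFrm.

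Finally, I would note that the units $\varphi : L \to \functor D \functor X(L)$ and $\varepsilon : X \to \functor X \functor D(X)$ of Pultr-Sichler duality remain isomorphisms after passing to the full subcategories \KRFrm and \KRL, since these units are simply the restrictions of those in \cref{thm: PS}. Consequently, the restricted functors $\functor X$ and $\functor D$ constitute a dual equivalence between \KRFrm and \KRL.

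I do not expect a genuine obstacle here: the only substantive ingredient is the object-level characterization in \cref{thm: compact regular}, which is cited as known, while the remainder is the routine fact that a dual equivalence restricts to any pair of full subcategories closed under its two functors. The single point meriting care is showing that $\functor D(X)$ is compact regular for a KRL-space $X$; this is handled most cleanly by transporting the defining properties along the isomorphism $\varepsilon$, as above, rather than attempting to reprove compact regularity from scratch.
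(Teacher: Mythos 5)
Your proposal is correct and matches the paper's own argument, which is precisely to combine the Pultr--Sichler duality of \cref{thm: PS} with the object-level characterization in \cref{thm: compact regular}; since both \KRFrm and \KRL are full subcategories, restricting the functors $\functor X$ and $\functor D$ (and their units) is exactly the routine step you describe. The paper states this in one line, but your more detailed verification--including transporting L-compactness and L-regularity along the unit $\varepsilon$--is the same proof spelled out.
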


\begin{proof}
Apply Theorems~\ref{thm: PS} and~\ref{thm: compact regular}.
\end{proof}

\begin{lemma}
    Let $X$ be a KRL-space and $Y$ the localic part of $X$.
     Then $X$ is L-spatial and $Y$ is compact. \label[lemma]{lemma:RPL-is-SL} \label[lemma]{lemma:rpl-implies-compact}
\end{lemma}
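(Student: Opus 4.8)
The plan is to prove the two assertions separately, disposing of compactness of $Y$ immediately and then reducing L-spatiality to the fact that every KRL-space is a CL-space. For compactness of $Y$, note that a KRL-space is by definition L-compact, so \cref{lemma:scott-iff-compact} applies directly and yields that $Y$ is compact. This is the easy half and requires nothing beyond the cited lemma.

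The substantive part is showing that $X$ is L-spatial. Here I would first argue that $X$ is a CL-space and then invoke \cref{thm:hp-implies-spatial}, which guarantees that every CL-space is L-spatial. To see that $X$ is a CL-space, I must show that every clopen upset $U$ is packed, i.e. that $\ker U$ is dense in $U$. Since $X$ is L-regular, $\reg U$ is already dense in $U$, and since $\ker U \subseteq U$ by \cref{lemma:kernel open upset}, it suffices to prove the inclusion $\reg U \subseteq \ker U$: for then $\reg U \subseteq \ker U \subseteq U$ with $\reg U$ dense in $U$ forces $\cl \ker U = U$, so $U$ is packed. As $\reg U$ and $\ker U$ are the unions of the clopen upsets $V$ with $V \prec U$ and $V \ll U$ respectively, this inclusion reduces to showing that $V \prec U$ implies $V \ll U$ for clopen upsets.

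The key step, establishing $V \prec U \Rightarrow V \ll U$ from L-compactness, I would carry out by mirroring the frame-theoretic proof that $a \prec b$ implies $a \ll b$ in a compact frame, with the clopen upset $A := (\downset V)^c$ playing the role of the pseudocomplement of $V$. Given $V \prec U$, i.e. $\downset V \subseteq U$, note that $\downset V$ is clopen because $X$ is an Esakia space, so $A$ is a clopen upset satisfying $A \cap V = \varnothing$ and $A \cup U = X$. Now let $W$ be any open upset with $U \subseteq \cl W$. Then $X = A \cup U \subseteq A \cup \cl W = \cl(A \cup W)$, the last equality holding because $A$ is clopen. Since $X$ is L-compact, it is a Scott upset, so applying \cref{lem: Scott upset} to the open upset $A \cup W$ gives $X = A \cup W$; because $V \cap A = \varnothing$, this forces $V \subseteq W$, and hence $V \ll U$ by definition.

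I expect this last inclusion to be the only genuinely substantive point: compactness of $Y$, the sandwich density argument, and the closing appeal to \cref{thm:hp-implies-spatial} are all routine assembly of results already available. The one place requiring care is the translation of the compact-frame argument into the Priestley setting, namely recognizing that $(\downset V)^c$ is the correct clopen upset to use and that the Scott-upset criterion of \cref{lem: Scott upset} supplies exactly the compactness step ``$X \subseteq \cl(A \cup W)$ implies $X \subseteq A \cup W$'' that replaces compactness of the top element $1$ in the frame proof.
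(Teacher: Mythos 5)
Your proof is correct, but it takes a genuinely different route from the paper's. The paper proves L-spatiality directly: given clopen upsets $U,V$ with $U \cap V^c \neq \varnothing$, it uses L-regularity to pick $z \in \reg U \cap V^c$ together with a clopen upset $W \ni z$ with $\downset W \subseteq U$, then takes $y \in \min(\downset W)$ below $z$; since $\min(X) \subseteq Y$ by L-compactness and $V^c$ is a downset, $y$ witnesses $U \cap V^c \cap Y \neq \varnothing$, so $Y$ is dense in $X$ (and compactness of $Y$ follows, as in your proof, from \cref{lemma:scott-iff-compact}). You instead show that $X$ is a CL-space and then invoke \cref{thm:hp-implies-spatial}. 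Your key step --- that L-compactness gives $V \prec U \Rightarrow V \ll U$, hence $\reg U \subseteq \ker U$ --- is in fact proved in the paper later, as \cref{lemma:reg-subset-ker}, though by a different argument (the paper uses $\min(\downset V) \subseteq Y$ and \cref{cor:y-pulls-closure-1}, whereas you use the clopen upset $(\downset V)^c$ as a pseudocomplement and the Scott-upset criterion of \cref{lem: Scott upset}, which avoids any reference to $Y$); similarly, the fact that a KRL-space is a CL-space is part of the paper's later \cref{thm: RPL implies TPL}. There is no circularity in your dependencies, since neither \cref{thm:hp-implies-spatial} nor the lemmas you cite rely on the statement being proved. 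What each approach buys: the paper's proof is elementary and self-contained, using only basic Priestley-space facts; yours leans on the imported Pultr--Sichler result \cref{thm:hp-implies-spatial} (not proved in the paper), but in exchange makes the analogy with the frame-theoretic fact ``well inside implies way below in a compact frame'' fully explicit and establishes en route two results the paper only obtains afterwards.
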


\begin{proof}
By \cref{lemma:scott-iff-compact}, it is sufficient to show that $X$ is L-spatial, for which, by \cref{lemma:properties-Priestley-0}, it is enough to show that $U \cap V^c \neq \varnothing$ implies $U \cap V^c \cap Y \neq \varnothing$ for all $U,V \in \clopup(X)$. Since $X$ is L-regular, $U = \cl\reg U$. Therefore, $U \cap V^c \neq \varnothing$ implies $\reg U \cap V^c \neq \varnothing$. Let $z \in \reg U \cap V^c$. Because $z \in \reg U$, there is a clopen upset $W$ containing $z$ such that $\downset W \subseteq U$. By \cref{lemma:properties-Priestley-3}, there is $y \in \min(\downset W)$ with $y \leq z$. Consequently, $y \in \downset W \subseteq U$ and $y \in V^c$ since $V^c$ is a downset. Moreover, $y \in Y$ because $y \in \min(X)$ and $\min(X) \subseteq Y$ since $X$ is L-compact.
\end{proof}

\begin{remark}
\cref{lemma:RPL-is-SL} provides an alternative proof of the well-known fact that each compact regular frame is spatial (see, e.g., \cite[p.~90]{Johnstone1982}).
\end{remark}

\begin{theorem} \label{thm: RPL and KHaus}
Let $X$ be an SL-space and $Y$ the localic part of $X$. Then $X$ is a KRL-space iff $Y$ is compact Hausdorff.
\end{theorem}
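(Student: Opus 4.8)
The plan is to prove the two implications separately, using \cref{lemma:scott-iff-compact} to handle compactness and \cref{lemma:regularity} to handle the separation/regularity interplay. Throughout I use that $Y$ is dense in $X$ (as $X$ is an SL-space) and sober (\cref{lem: Y is sober}), hence $T_0$.

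For the forward implication, assume $X$ is a KRL-space. Compactness of $Y$ is immediate from \cref{lemma:rpl-implies-compact}. For the Hausdorff property, I would take distinct $y_1,y_2 \in Y$; since $Y$ is $T_0$, after possibly interchanging them there is a clopen upset $U$ of $X$ with $y_1 \in U$ and $y_2 \notin U$. As $X$ is L-regular, $U$ satisfies condition (4) of \cref{lemma:regularity}, hence also \cref{lemma:regularity-2}. Applying the latter to $y_1 \in U \cap Y$ produces disjoint clopen upsets $V,W$ with $y_1 \in V$ and $U^c \subseteq W$; since $y_2 \in U^c \subseteq W$, the traces $V \cap Y$ and $W \cap Y$ are the required disjoint open neighborhoods of $y_1$ and $y_2$.

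For the converse, assume $Y$ is compact Hausdorff. Because $X$ is L-spatial and $Y$ is compact, \cref{lemma:scott-iff-compact} gives that $X$ is L-compact; in particular $\min(X) \subseteq Y$. It remains to prove L-regularity, and since $X$ is L-spatial it suffices, by \cref{lemma:regularity} and \cref{cor:containment-reg-1}, to show $\downset\upset y \subseteq U$ for every $U \in \clopup(X)$ and every $y \in U \cap Y$. First I would suppose $w \in \downset\upset y$ with $w \notin U$ and, using that $\downset\upset y$ is a closed downset, invoke \cref{lemma:properties-Priestley-3} to pick $m \in \min(\downset\upset y)$ with $m \le w$. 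Minimality forces $m \in \min(X) \subseteq Y$, and $m \le w \in U^c$ gives $m \in U^c \cap Y$, so $m \neq y$. Since $m \in \downset\upset y$, there is $x$ with $y \le x$ and $m \le x$. Hausdorffness of $Y$ separates $m$ and $y$ by clopen upsets $V \ni y$ and $W \ni m$ whose traces on $Y$ are disjoint; density of $Y$ upgrades this to $V \cap W = \varnothing$ in $X$. But $x$ is a common upper bound of $y$ and $m$, so $x \in V \cap W$, a contradiction. Hence $\downset\upset y \subseteq U$, so $y \in \reg U$, giving $U \cap Y \subseteq \reg U$ and thus L-regularity.

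I expect the converse, specifically establishing L-regularity, to be the main obstacle: the difficulty is converting the purely topological separation available in the compact Hausdorff space $Y$ into the order-theoretic containment $\downset\upset y \subseteq U$ in $X$. The decisive ingredients are that L-compactness pushes the relevant minimal element $m$ into $Y$, that $m$ and $y$ share an upper bound coming from membership in $\downset\upset y$, and that density lets disjointness in $Y$ propagate to disjointness of clopen upsets in $X$. The forward implication, by contrast, is a short direct application of \cref{lemma:regularity-2} once $T_0$-separation supplies the separating clopen upset.
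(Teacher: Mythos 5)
Your proof is correct, and its converse direction takes a genuinely different route from the paper's. For the forward implication you and the paper both rest on \cref{lemma:rpl-implies-compact} and the implication (4)$\Rightarrow$(2) of \cref{lemma:regularity}; the only difference is that you separate two points directly (getting Hausdorffness outright), while the paper separates a point from a closed set (getting regularity, then Hausdorffness via $T_0$) --- a minor variant. The real divergence is in the converse. The paper uses the \emph{regularity} of the compact Hausdorff space $Y$: it separates $y$ from the closed set $Y \setminus U$ by disjoint opens of $Y$, pulls the disjointness up to $X$ via \cref{lem:cl-respects-meet-2}, and pulls the containment $Y\setminus U \subseteq W \cap Y$ up to $U^c \subseteq W$ via L-spatiality, thereby verifying condition (2) of \cref{lemma:regularity}. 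You instead verify condition (3) by contradiction: given $w \in \downset\upset y \setminus U$, you use \cref{lemma:properties-Priestley-3} and the fact that minimal points of a downset are minimal in $X$ to produce $m \in \min(X) \subseteq Y$ below $w$, then use only the \emph{Hausdorff} point--point separation of $m$ from $y$, upgraded to disjoint clopen upsets by density, and derive a contradiction from the common upper bound $x \in \upset y \cap \upset m$. What your argument buys is economy of hypotheses at that step: you never invoke regularity of $Y$, only $T_2$ separation, because L-compactness ($\min(X) \subseteq Y$) supplies the needed localic point; the price is that your two halves are intertwined (L-regularity uses L-compactness), whereas in the paper's proof L-compactness and L-regularity of $X$ are derived independently, each mirroring one ingredient of compact Hausdorffness.
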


\begin{proof}
    Let $X$ be a KRL-space. By \cref{lemma:rpl-implies-compact}, $Y$ is compact. We prove that $Y$ is regular. Let $y \in Y$ and $F$ be a closed subset of $Y$ with $y\notin F$. Then $Y\setminus F$ is an open subset of $Y$ containing $y$. Therefore, there is a clopen upset $U$ of $X$ with $U\cap Y=Y\setminus F$. 
    Since $X$ is L-regular, $U$ is L-regular. Therefore, by the implication (4)$\Rightarrow$(2) in \cref{lemma:regularity}, there exist disjoint clopen upsets $V, W$ such that $y \in V$ and $U^c \subseteq W$. Thus, $V \cap Y$ and $W \cap Y$ are disjoint open subsets of $Y$ such that $y \in V \cap Y$ and $F=Y \setminus U \subseteq W \cap Y$. This implies that $Y$ is regular. 
    Consequently, $Y$ is compact Hausdorff.
    
    Conversely, let $Y$ be compact Hausdorff. Since $X$ is an SL-space, $X$ is L-compact by \cref{lemma:scott-iff-compact}. To see that $X$ is L-regular, let $U$ be a clopen upset of $X$. Suppose $y \in U \cap Y$. Since $Y$ is regular, $Y \setminus U$ is closed in $Y$, and $y \not \in Y \setminus U$, there exist clopen upsets $V, W$ such that $V \cap W \cap Y = \varnothing$, $y \in V$, and $Y \setminus U \subseteq W \cap Y$. Therefore, since $X$ is L-spatial,
    \[
    V \cap W = \cl(V \cap Y) \cap \cl(W \cap Y) = \cl(V \cap W \cap Y) = \cl \varnothing = \varnothing,
    \] 
    where the second equality follows from Lemma~\ref{lem:cl-respects-meet-2}. 
    Moreover, $Y \setminus U \subseteq W \cap Y$ implies $U^c \subseteq W$ because $X$ is L-spatial. Thus, $U$ is L-regular by \cref{lemma:regularity}. This finishes the proof that $X$ is a KRL-space.
\end{proof}

\begin{corollary} \label{RPL=KHaus}
    {\KRL} is  equivalent to {\KHaus}.
\end{corollary}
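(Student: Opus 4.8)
The plan is to obtain this equivalence simply by restricting the equivalence $\functor Y : \SL \to \Sob$ of \cref{cor: sl=sob}, following the same template used for \cref{thm: st loc comp} and \cref{thm: tpl = stk}. The first thing I would record is that \KRL is a full subcategory of \SL: by \cref{lemma:RPL-is-SL} every KRL-space is L-spatial, hence an SL-space, and in both categories the morphisms are just L-morphisms. Dually, \KHaus is by definition a full subcategory of \Sob with continuous maps as morphisms on both sides. So the ambient equivalence $\functor Y : \SL \to \Sob$ is already available, and only its compatibility with the two subcategories needs checking.

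Next I would verify that $\functor Y$ restricts to a functor $\KRL \to \KHaus$, i.e.\ that it is well defined on objects. This is immediate from \cref{thm: RPL and KHaus}: if $X$ is a KRL-space, then (being an SL-space by the previous step) its localic part $\functor Y(X)$ is compact Hausdorff. For essential surjectivity, given a compact Hausdorff space $Z$, I would use that $\functor Y : \SL \to \Sob$ is essentially surjective (\cref{thm: Y is functor}) to produce an SL-space $X$ with $\functor Y(X) \cong Z$; since $\functor Y(X)$ is then compact Hausdorff, \cref{thm: RPL and KHaus} forces $X$ to be a KRL-space. Fullness and faithfulness require no new work at all: because \KRL and \KHaus are \emph{full} subcategories of \SL and \Sob respectively, and the morphism classes are unrestricted, both properties are inherited directly from the fact that $\functor Y : \SL \to \Sob$ is full and faithful (\cref{thm: Y is full and faithful}). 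Being essentially surjective, full, and faithful, the restriction $\functor Y : \KRL \to \KHaus$ is therefore an equivalence.

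I do not expect a genuine obstacle at the categorical level, and this is precisely what distinguishes the argument from the Hofmann-Lawson case in \cref{thm:hl-spaces}: there the morphisms on both sides are required to be proper, so one must separately check that $\functor Y$ is well defined on morphisms via the proper-map correspondence, whereas here the morphisms are unrestricted and that check disappears. All the substantive content is packaged into the object-level characterization \cref{thm: RPL and KHaus} (deducing regularity of $Y$ from L-regularity of $X$ and conversely), which is the only real work; once it is in hand, the equivalence follows formally from the already-established equivalence between \SL and \Sob.
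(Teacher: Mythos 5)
Your proposal is correct and follows essentially the same route as the paper: the paper's proof likewise combines \cref{cor: sl=sob}, \cref{lemma:rpl-implies-compact} (the same lemma you cite as \cref{lemma:RPL-is-SL}), and \cref{thm: RPL and KHaus}, merely stated more tersely. Your write-up just makes explicit the restriction-of-equivalence bookkeeping that the paper leaves implicit.
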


\begin{proof}
Apply \cref{cor: sl=sob}, \cref{lemma:rpl-implies-compact}, and \cref{thm: RPL and KHaus}.
\end{proof}

We can now derive Isbell duality from \cref{KRFrm=RPL,RPL=KHaus}:

\begin{corollary}
    {\KRFrm} is dually equivalent to {\KHaus}.
\end{corollary}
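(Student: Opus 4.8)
The plan is to obtain Isbell duality as the composite of the two results established immediately above: the dual equivalence between \KRFrm and \KRL of \cref{KRFrm=RPL}, and the equivalence between \KRL and \KHaus of \cref{RPL=KHaus}. Since the composite of a dual equivalence with an ordinary equivalence is again a dual equivalence, stringing these together yields that \KRFrm is dually equivalent to \KHaus, which is exactly the assertion.

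Concretely, I would take the contravariant functor $\functor X : \KRFrm \to \KRL$ (the restriction of the Pultr--Sichler functor sending a compact regular frame to its Priestley space) and postcompose it with the covariant functor $\functor Y : \KRL \to \KHaus$ (sending a KRL-space to its localic part). The resulting contravariant functor $\functor Y \functor X : \KRFrm \to \KHaus$ sends a compact regular frame $L$ to the localic part of its Priestley space $X_L$, which by \cref{thm: RPL and KHaus} is compact Hausdorff. In the reverse direction I would compose a quasi-inverse of $\functor Y$ with the contravariant functor $\functor D : \KRL \to \KRFrm$ coming from \cref{KRFrm=RPL}.

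The only thing to verify is that these composites are genuinely quasi-inverse, that is, that the natural isomorphisms witnessing the two equivalences paste together into a single pair of natural isomorphisms. This is the standard categorical fact that (dual) equivalences are closed under composition, so no new construction or estimate is required: all of the substantive work has already been carried out in \cref{thm: compact regular}, which identifies the dual category of \KRFrm among L-spaces, and in \cref{thm: RPL and KHaus}, which matches KRL-spaces with compact Hausdorff spaces via the localic part. Accordingly, I expect no genuine obstacle here beyond correctly tracking the variance of the functors when composing them.
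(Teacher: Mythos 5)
Your proposal is correct and matches the paper's own argument exactly: the corollary is obtained by composing the dual equivalence of \cref{KRFrm=RPL} with the equivalence of \cref{RPL=KHaus}, using the standard fact that a dual equivalence composed with an equivalence is a dual equivalence. The extra bookkeeping you describe about pasting the functors $\functor Y \functor X$ and tracking variance is the same (routine) content the paper leaves implicit.
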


We next compare $\ker$ with $\reg$. This is reminiscent of the comparison between compact and complemented elements in frames.

\begin{lemma} \label{lemma:compact-regular-implies-reg=ker}
    Let $X$ be an L-space.
    \begin{enumerate}[label={\upshape(\arabic*)},ref=\thelemma(\arabic*)]
        \item If $U\in\clopup(X)$ is L-regular, 
        then $\ker U \subseteq \reg U$. \label[lemma]{lemma:ker-subset-reg}
        \item $X$ is L-compact iff $\reg U \subseteq \ker U$ for every $U\in\clopup(X)$.  \label[lemma]{lemma:reg-subset-ker}
        \item If $X$ is a KRL-space, then $\reg U = \ker U$ for every $U\in\clopup(X)$. \label[lemma]{lemma:reg=ker}
    \end{enumerate}
\end{lemma}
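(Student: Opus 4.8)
The plan is to obtain (3) for free from (1) and (2): a KRL-space is by definition both L-compact and L-regular, and L-regularity says every clopen upset is L-regular, so (1) gives $\ker U \subseteq \reg U$ for all $U\in\clopup(X)$ while (2) gives the reverse inclusion; combining them yields $\reg U = \ker U$. Thus the work is entirely in (1) and (2).

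For (1), the key observation is that L-regularity of $U$ makes $\reg U$ itself a witness for the way-below relation. Since $U$ is L-regular, $\reg U$ is dense in $U$, and as $\reg U \subseteq U$ with $U$ clopen this means $U = \cl \reg U$. Moreover $\reg U$, being a union of clopen upsets, is an open upset. I would then take any clopen upset $V$ with $V \ll U$ and apply the definition of $\ll$ to the open upset $W = \reg U$: from $U \subseteq \cl\reg U$ we conclude $V \subseteq \reg U$. Since $\ker U$ is the union of all such $V$, this gives $\ker U \subseteq \reg U$.

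For the forward direction of (2), I would show that under L-compactness every clopen upset $V$ with $V \prec U$ (that is, $\downset V \subseteq U$) in fact satisfies $V \ll U$; since $\reg U$ is the union of such $V$ and each then lies in $\ker U$, this yields $\reg U \subseteq \ker U$. To prove $V \ll U$, let $W$ be an open upset with $U \subseteq \cl W$ and fix $v \in V$, with goal $v \in W$. Using that $\downset v$ is closed (\cref{lemma:properties-Priestley-2.5}) together with \cref{lemma:properties-Priestley-3}, I would choose a minimal element $m$ of $\downset v$ with $m \le v$; minimality inside the downset $\downset v$ forces $m \in \min(X)$, so L-compactness ($\min(X) \subseteq Y$) gives $m \in Y$ and hence $\downset m$ open. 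Since $m \le v \in V$ and $\downset V \subseteq U$, we have $m \in U \subseteq \cl W$, so the open neighborhood $\downset m$ of $m$ meets $W$, producing $w \in W$ with $w \le m$; then $m \in W$ (as $W$ is an upset) and $v \in W$ (since $m \le v$). This is the step I expect to be the main obstacle, as it is where L-compactness is used essentially, to convert a minimal point into a localic point whose down-set is open. For the converse of (2), I would simply test the hypothesis on $U = X$: one checks directly that $X \prec X$ gives $\reg X = X$, so $\reg X \subseteq \ker X$ forces $\ker X = X$, which by \cref{lemma:compactness} is precisely L-compactness of $X$.
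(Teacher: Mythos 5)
Your proposal is correct and follows essentially the same route as the paper: (1) via $U=\cl\reg U$ and the defining property of $\ll$ applied to the open upset $\reg U$ (the paper packages this as \cref{lemma:kernel-adjoint-of-closure}), (2) forward by showing $V\prec U$ implies $V\ll U$ using that minimal points of downsets lie in $\min(X)\subseteq Y$ and have open downsets (the paper packages this last step as \cref{cor:y-pulls-closure-1}), (2) converse by testing on $U=X$ and citing \cref{lemma:compactness}, and (3) by combining (1) and (2). The only difference is that you unfold pointwise two lemmas the paper cites, which is immaterial.
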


\begin{proof}
(1) Since $U$ is L-regular, $\cl\reg U = U$. Therefore, $\ker U \subseteq \reg U$ by \cref{lemma:kernel-adjoint-of-closure}. 

(2) First suppose that $X$ is L-compact and $U\in\clopup(X)$. We show that $V \prec U$ implies $V \ll U$ for every $V\in\clopup(X)$. 
Let $U \subseteq \cl (W)$ for some open upset $W$. Then $U \cap Y \subseteq W$ by \cref{cor:y-pulls-closure-1}.
Moreover, since
$\downset V \subseteq U$, 
we have $\min (\downset V) \subseteq U$. Therefore, $\min(\downset V) \subseteq U \cap Y$ because $X$ is L-compact.
Thus, $V \subseteq \upset \min(\downset V) \subseteq \upset(U \cap Y) \subseteq W$. Consequently, $V \ll U$, and hence $\reg U \subseteq \ker U$.

Conversely, since $\reg X = X$ (see Lemma~\ref{lem: well inside}),
$\reg X \subseteq \ker X$ implies $\ker X = X$, so $X$ is L-compact by \cref{lemma:compactness}.

(3) This follows from (1) and (2).
\end{proof}

For the next lemma we recall from \cref{sec: frm} that a subset of a poset is a biset if it is both an upset and a downset.

\begin{lemma}
    Let $X$ be an L-space and $Y$ the localic part of $X$.
    \begin{enumerate}[ref=\thelemma(\arabic*)]
        \item If $X$ is L-compact, then each closed biset is a Scott upset. \label[lemma]{lemma:tightly-packed-implies-biset-is-scott}
        \item If $X$ is L-regular, then each Scott upset is a biset. \label[lemma]{lemma:regular-implies-scott-is-biset}
        \item If $X$ is L-regular, then $Y \subseteq \min(X)$. \label[lemma]{lemma:Y-subset-Min}
        \item If $X$ is a KRL-space, then closed bisets are exactly Scott upsets.
        \item If $X$ is a KRL-space, then $\min(X) = Y$. Consequently, $\min(\downset F) = \downset F \cap Y$ for every $F \subseteq X$. \label[lemma]{cor:regularly-packed-implies-min=Y}
    \end{enumerate}
\end{lemma}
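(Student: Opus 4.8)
The plan is to prove (1) and (2) directly from the definitions and the earlier machinery, and then to obtain (3)--(5) as formal consequences. Throughout I will use that a Scott upset is, by definition, a closed upset $F$ with $\min(F) \subseteq Y$, that $X$ is L-compact precisely when $\min(X) \subseteq Y$, the characterization of Scott upsets in \cref{lem: Scott upset}, and the well-inside characterization in \cref{cor:containment-reg-2}.

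For (1), let $B$ be a closed biset; since $B$ is then a closed upset, it suffices to check $\min(B) \subseteq Y$. First I would observe that $\min(B) \subseteq \min(X)$: if $b \in \min(B)$ and $z \leq b$, then $z \in B$ because $B$ is a downset, so minimality of $b$ in $B$ forces $z = b$, whence $b \in \min(X)$. As $X$ is L-compact, $\min(X) \subseteq Y$, so $\min(B) \subseteq Y$ and $B$ is a Scott upset.

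The main obstacle is (2). Here I would take a Scott upset $F$ and show it is a downset (it is already an upset). Suppose $x \in \downset F$ but $x \notin F$. Since $F$ is a closed upset, \cref{lemma:properties-Priestley-2} yields a clopen upset $U$ with $F \subseteq U$ and $x \notin U$. L-regularity of $X$ makes $U$ L-regular, so $\reg U$ is dense in $U$; as $\reg U$ is an open upset contained in $U$, this means $\cl \reg U = U \supseteq F$. Applying \cref{lem: Scott upset} to the Scott upset $F$ and the open upset $\reg U$ then upgrades $F \subseteq \cl\reg U$ to $F \subseteq \reg U$. By \cref{cor:containment-reg-2}, and since $\upset F = F$, this is equivalent to $\downset F \subseteq U$, so $x \in \downset F \subseteq U$, contradicting $x \notin U$. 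Hence $F$ is a downset, i.e., a biset. The delicate point is recognizing that the Scott-upset property is exactly the lever that promotes $F \subseteq \cl \reg U$ to $F \subseteq \reg U$, after which the well-inside characterization closes the argument.

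The remaining items are formal. For (3), note that for $y \in Y$ the set $\upset y$ is a closed upset whose unique minimal element $y$ lies in $Y$, hence a Scott upset; by (2) it is a downset, so $z \leq y$ gives $z \in \upset y$, i.e. $y \leq z$, forcing $z = y$, and thus $y \in \min(X)$. For (4), a closed biset is a Scott upset by (1), while a Scott upset is a closed biset by (2) (it is closed by definition), giving the stated equality. For (5), $\min(X) = Y$ follows by combining $Y \subseteq \min(X)$ from (3) with $\min(X) \subseteq Y$ from L-compactness; the consequence then follows from the elementary identity $\min(\downset F) = \min(X) \cap \downset F$, valid for every $F \subseteq X$ because $\downset F$ is a downset, upon substituting $\min(X) = Y$ to obtain $\min(\downset F) = \downset F \cap Y$.
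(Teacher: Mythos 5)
Your proposal is correct. Parts (1), (3), (4), and (5) are essentially the paper's own arguments: (1) via $\min(B)\subseteq\min(X)\subseteq Y$, (3) by applying (2) to the Scott upset $\upset y$, and (4), (5) as formal combinations of the earlier items together with the identity $\min(\downset F)=\downset F\cap\min(X)$. The genuine difference is in (2), where the paper argues pointwise: given $x\le z$ with $z\in F$, it picks a minimal localic point $y\in F\cap Y$ below $z$, assumes $y\not\le x$, separates $y$ from $x$ by a clopen upset $U$ via the Priestley axiom, and then invokes L-regularity in the form $\downset\upset(U\cap Y)\subseteq U$ (the implication (4)$\Rightarrow$(3) of \cref{lemma:regularity}) to force $x\in U$, a contradiction; this yields the slightly sharper fact that the minimal localic point $y$ actually lies below $x$. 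You instead argue globally: you separate the whole Scott upset $F$ from a point $x\notin F$ by a clopen upset $U$ (\cref{lemma:properties-Priestley-2}), use L-regularity as density of $\reg U$ in $U$ to get $F\subseteq\cl\reg U$, promote this to $F\subseteq\reg U$ by the defining property of Scott upsets (\cref{lem: Scott upset}), and close with \cref{cor:containment-reg-2} to get $\downset F\subseteq U$. Your route is arguably more conceptual --- it isolates exactly the lever that Scott upsets provide (they cannot distinguish an open upset from its closure) and avoids handling individual minimal elements --- while the paper's route stays closer to the order-theoretic combinatorics of points and records the intermediate order relation $y\le x$, which is reused in spirit elsewhere (e.g.\ in \cref{lemma:same-min}). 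Both uses of L-regularity ultimately factor through the $\reg$ apparatus, so the two proofs are of comparable length and depth.
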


\begin{proof}
    (1) Since $X$ is L-compact, $\min(X) \subseteq Y$. Therefore, for each closed biset $F$, we have $\min(F) \subseteq \min(X) \subseteq Y$. Thus, $F$ is a Scott upset.
    
    (2) Suppose $F$ is a Scott upset. Let $x \in \downset F$. Then there is $z \in F$ with $x \leq z$. Since $F$ is a Scott upset, there is $y \in F \cap Y$ with $y \leq z$. 
    If $y\not\leq x$, then there is a clopen upset $U$ with $y \in U$ and $x \not \in U$.  Since $X$ is L-regular, we have $\downset\upset(U \cap Y) \subseteq U$ by the implication (4)$\Rightarrow$(3) in Lemma~\ref{lemma:regularity}. Therefore, $\downset \upset y \subseteq U$, so $x \in U$, a contradiction. Thus, we must have $y \leq x$, so $x\in F$, and hence $F$ is a biset.
    
    (3) Let $y \in Y$. Then $\upset y$ is a Scott upset, so $\upset y$ is a downset by (2). Thus, $y \in \min(X)$.
    
    (4) This follows from (1) and (2).
    
    (5) Since $X$ is a KRL-space, $X$ is L-compact, so $\min(X) \subseteq Y$. The reverse inclusion follows from (3). Consequently,
    $\min(\downset F) = \downset F \cap \min(X) = \downset F \cap Y$.
\end{proof} 

\begin{remark}
        The frame-theoretic reading of the first part of \cref{cor:regularly-packed-implies-min=Y} is that in a compact regular frame the minimal prime filters are exactly the completely prime filters. This was first observed in \cite[Lem.~5.2,~5.3]{BezhanishviliGabelaiaJibladze2016}.
\end{remark}

\begin{theorem} \label{thm: RPL implies TPL}
Each KRL-space is a StKL-space.
\end{theorem}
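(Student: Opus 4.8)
The plan is to verify directly that a KRL-space $X$ satisfies the three properties defining a StKL-space (see \cref{def: packed,def: StCL,def: TPL}): L-compactness, the CL-space property, and Scott-stability. The first of these is immediate, since by definition a KRL-space is L-compact. The substantive content has in fact already been isolated in the two preceding lemmas, so the proof should amount to assembling them; I expect no real obstacle to remain, and the only care needed is to cite the correct items.

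For the CL-space property I would use the comparison between the kernel and the regular part recorded in \cref{lemma:compact-regular-implies-reg=ker}. Let $U$ be a clopen upset of $X$. Since $X$ is a KRL-space, \cref{lemma:reg=ker} gives $\reg U = \ker U$. As $X$ is L-regular, $\reg U$ is dense in $U$, and hence so is $\ker U$. Thus every clopen upset of $X$ is packed, i.e.\ $X$ is a CL-space.

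For Scott-stability I would exploit the biset description of Scott upsets. Let $F$ and $G$ be Scott upsets of $X$. Since $X$ is L-regular, \cref{lemma:regular-implies-scott-is-biset} shows that $F$ and $G$ are bisets, and by definition they are closed, so $F \cap G$ is again a closed biset (intersections of closed sets are closed, of upsets are upsets, and of downsets are downsets). Since $X$ is L-compact, \cref{lemma:tightly-packed-implies-biset-is-scott} then guarantees that the closed biset $F \cap G$ is a Scott upset. Hence $X$ is Scott-stable. Combining the three items, $X$ is an L-compact, Scott-stable CL-space, and therefore a StKL-space. The closest thing to a pitfall is simply making sure one invokes L-regularity (for the CL-space step and for turning Scott upsets into bisets) and L-compactness (for L-compactness itself and for turning closed bisets back into Scott upsets) in the right places; once that bookkeeping is done, the argument is complete.
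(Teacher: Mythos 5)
Your proof is correct, and its treatment of Scott-stability takes a genuinely different route from the paper's. The L-compactness and CL-space steps coincide with the paper's proof, which likewise gets $\reg U = \ker U$ from \cref{lemma:reg=ker} and concludes $\cl\ker U = \cl\reg U = U$ from L-regularity. For Scott-stability, however, the paper never touches the biset lemma: it first establishes kernel-stability by a chain of equivalences, showing $W \subseteq \ker U \cap \ker V \iff \downset W \subseteq U \cap V \iff W \subseteq \ker(U \cap V)$ for every clopen upset $W$ (again via $\reg = \ker$), and then converts kernel-stability into Scott-stability using \cref{lemma:kernel-stable-Scott-2}, which is only available once the CL-space property is in hand. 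You instead pass Scott upsets through the order-theoretic characterization: L-regularity makes $F$ and $G$ closed bisets by \cref{lemma:regular-implies-scott-is-biset}, closed bisets are closed under intersection, and L-compactness turns the closed biset $F \cap G$ back into a Scott upset by \cref{lemma:tightly-packed-implies-biset-is-scott}. Your route is shorter and has a structural advantage: the Scott-stability verification is independent of the CL-space step, whereas in the paper the three verifications are chained (Scott-stability depends on continuity). What the paper's detour buys is the explicit identity $\ker U \cap \ker V = \ker(U \cap V)$, the Priestley-space counterpart of stability of $\ll$ (cf.\ \cref{lemma:locally-stably-compact-iff-kernels-intersections}), which your argument establishes only implicitly, via the equivalence of kernel- and Scott-stability for CL-spaces.
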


\begin{proof}
Let $X$ be a KRL-space. We first prove that $X$ is a CL-space. Let $U$ be a clopen upset of $X$. By \cref{lemma:reg=ker}, $\reg U = \ker U$, so $\cl\ker U = \cl\reg U = U$ since $U$ is L-regular. Therefore, $X$ is a CL-space.

Next let $U, V$ be clopen upsets of $X$. For each clopen upset $W$, by \cref{lemma:reg=ker}, we have
\begin{align*}
W \subseteq \ker U \cap \ker V
&\iff W \subseteq \reg U \cap \reg V
\iff W \subseteq \reg U, \reg V\\
&\iff \downset W \subseteq U \cap V
\iff W \subseteq \reg (U \cap V) \\
&\iff W \subseteq \ker (U \cap V).
\end{align*}
Therefore, $\ker U \cap \ker V = \ker (U \cap V)$, and so $X$ is kernel-stable. Since $X$ a CL-space, $X$ is Scott-stable by \cref{lemma:kernel-stable-Scott-2}. Also, because $X$ is a KRL-space, $X$ is L-compact. Consequently, $X$ is a StKL-space.
\end{proof}

\begin{theorem}
Let $f : X_1 \to X_2$ be an L-morphism between L-spaces.
\begin{enumerate}[ref=\thetheorem(\arabic*)]
    \item $f^{-1}(\reg U) \subseteq \reg f^{-1}(U)$ for each clopen upset $U$ of $X_2$.
    \item If $X_1$ is L-compact and $X_2$ is L-regular, then $f$ is proper.  \label[theorem]{thm: proper}
\end{enumerate}
\end{theorem}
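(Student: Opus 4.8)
The plan is to prove the two parts in order, using the intrinsic description of $\reg$ from \cref{cor:containment-reg-1}, namely that $x \in \reg U$ iff $\downset\upset x \subseteq U$, together with the order-preservation of $f$.

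For part (1), I would start with $x \in f^{-1}(\reg U)$, so that $f(x) \in \reg U$ and hence $\downset\upset f(x) \subseteq U$ by \cref{cor:containment-reg-1}. The key observation is the purely order-theoretic inclusion $f(\downset\upset x) \subseteq \downset\upset f(x)$: if $p \le q \ge x$, then applying the monotone map $f$ gives $f(p) \le f(q) \ge f(x)$, so $f(p) \in \downset\upset f(x)$. Combining this with $\downset\upset f(x) \subseteq U$ yields $f(\downset\upset x) \subseteq U$, that is, $\downset\upset x \subseteq f^{-1}(U)$. Since $f^{-1}(U)$ is a clopen upset of $X_1$ (as $f$ is a Priestley morphism), \cref{cor:containment-reg-1} applied in $X_1$ gives $x \in \reg f^{-1}(U)$, as desired.

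For part (2), recall that $f$ is proper exactly when $f^{-1}(\ker U) \subseteq \ker f^{-1}(U)$ for every clopen upset $U$ of $X_2$ (see \cref{def: proper L-morphism}). The strategy is to sandwich $f^{-1}(\ker U)$ between $\ker$ and $\reg$ using the two halves of \cref{lemma:compact-regular-implies-reg=ker}. Since $X_2$ is L-regular, every clopen upset $U$ of $X_2$ is L-regular, so $\ker U \subseteq \reg U$ by \cref{lemma:ker-subset-reg}; applying $f^{-1}$ and then part (1) gives
\[
f^{-1}(\ker U) \subseteq f^{-1}(\reg U) \subseteq \reg f^{-1}(U).
\]
Finally, because $X_1$ is L-compact and $f^{-1}(U)$ is a clopen upset of $X_1$, \cref{lemma:reg-subset-ker} gives $\reg f^{-1}(U) \subseteq \ker f^{-1}(U)$. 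Chaining these inclusions yields $f^{-1}(\ker U) \subseteq \ker f^{-1}(U)$, so $f$ is proper.

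The arguments are short because the heavy lifting has already been done in the $\ker$--$\reg$ comparison lemma. The one step requiring genuine (if elementary) care is the order-theoretic inclusion $f(\downset\upset x) \subseteq \downset\upset f(x)$ in part (1); everything else is a formal chase through \cref{cor:containment-reg-1} and \cref{lemma:compact-regular-implies-reg=ker}. I do not expect any serious obstacle, since neither part invokes the topology of the L-spaces beyond the facts that $f$ is order-preserving and that preimages of clopen upsets under a Priestley morphism are again clopen upsets.
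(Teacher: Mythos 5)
Your proposal is correct and follows essentially the same route as the paper: part (1) via the characterization $x \in \reg U \iff \downset\upset x \subseteq U$ from \cref{cor:containment-reg-1} together with order-preservation of $f$ (your explicit verification of $f(\downset\upset x) \subseteq \downset\upset f(x)$ is exactly the step the paper leaves implicit), and part (2) via the same chain $f^{-1}(\ker U) \subseteq f^{-1}(\reg U) \subseteq \reg f^{-1}(U) \subseteq \ker f^{-1}(U)$ using both halves of \cref{lemma:compact-regular-implies-reg=ker}. No gaps; nothing to change.
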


\begin{proof}
    (1) Suppose $x \in f^{-1}(\reg U)$. Then $f(x) \in \reg U$. Therefore, $\downset \upset f(x) \subseteq U$ by \cref{cor:containment-reg-1}. Since $f$ is order-preserving, we obtain $f(\downset \upset x) \subseteq U$. Thus, $\downset \upset x \subseteq f^{-1}(U)$, and so $x \in \reg f^{-1}(U)$ by \cref{cor:containment-reg-1}.
    
    (2) Let $U$ be a clopen upset of $X_2$. Since $X_2$ is L-regular, $U$ is L-regular. Therefore, since $X_1$ is L-compact, by (1) and \cref{lemma:compact-regular-implies-reg=ker},
    we have
    \[
        f^{-1}(\ker U) \subseteq f^{-1}(\reg U) \subseteq \reg f^{-1}(U) \subseteq \ker f^{-1}(U).
    \] Thus, $f$ is proper.
\end{proof}

\begin{remark}
\cref{thm: proper} corresponds to the well-known fact that every frame homomorphism from a compact frame to a regular frame is proper.
\end{remark}

Putting Theorems~\ref{thm: RPL implies TPL} and~\ref{thm: proper} together yields:

\begin{corollary}
    {\KRL} is a full subcategory of {\SKL}. 
\end{corollary}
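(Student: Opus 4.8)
The plan is to unwind what it means for $\KRL$ to be a full subcategory of $\SKL$ and then discharge the resulting obligations using the two theorems just proved. Recall that $\KRL$ is a full subcategory of $\LPries$, so its morphisms are all L-morphisms between KRL-spaces, whereas $\SKL$ is (by iterated full inclusion in $\CL$) the category whose morphisms are all \emph{proper} L-morphisms between StKL-spaces. Thus the statement reduces to two claims: that every KRL-space is a StKL-space (inclusion on objects), and that for any two KRL-spaces $X_1$ and $X_2$ the L-morphisms $X_1 \to X_2$ are precisely the proper L-morphisms $X_1 \to X_2$ (agreement of hom-sets).

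The object inclusion is exactly \cref{thm: RPL implies TPL}, so nothing more is needed there. For the hom-sets, one inclusion is immediate, since every proper L-morphism is an L-morphism by definition. For the reverse inclusion I would appeal to \cref{thm: proper}: each KRL-space is both L-compact and L-regular, so for KRL-spaces $X_1, X_2$ the source $X_1$ is L-compact and the target $X_2$ is L-regular, whence every L-morphism $f : X_1 \to X_2$ is automatically proper. The two hom-sets therefore coincide, and together with the object inclusion this yields that $\KRL$ is a full subcategory of $\SKL$.

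Given \cref{thm: RPL implies TPL,thm: proper}, there is no genuine analytic obstacle; the work is purely in the categorical bookkeeping. The one point demanding care is to notice that being a \emph{full} subcategory forces the hom-sets to agree on the nose, and that the apparent mismatch between the morphisms of $\KRL$ (arbitrary L-morphisms) and those of $\SKL$ (proper L-morphisms) vanishes precisely because \cref{thm: proper} makes properness automatic for L-morphisms whose source is L-compact and whose target is L-regular.
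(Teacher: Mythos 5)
Your proof is correct and matches the paper's argument exactly: the paper likewise obtains the corollary by combining \cref{thm: RPL implies TPL} (object inclusion) with \cref{thm: proper} (every L-morphism from an L-compact space to an L-regular space is proper, so the hom-sets of \KRL and \SKL agree on KRL-spaces). Your write-up simply makes explicit the categorical bookkeeping that the paper leaves implicit.
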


We thus arrive at the following diagram, where we use the same notation as in \cref{diagram: introduction}. In addition, the 
arrow ($\longleftrightarrow$) represents an equivalence of categories. For an overview of the introduced categories of Priestley spaces see \cref{table: L-spaces}. The corresponding  categories of frames and spaces are described in \cref{table:frames,table:spaces}.

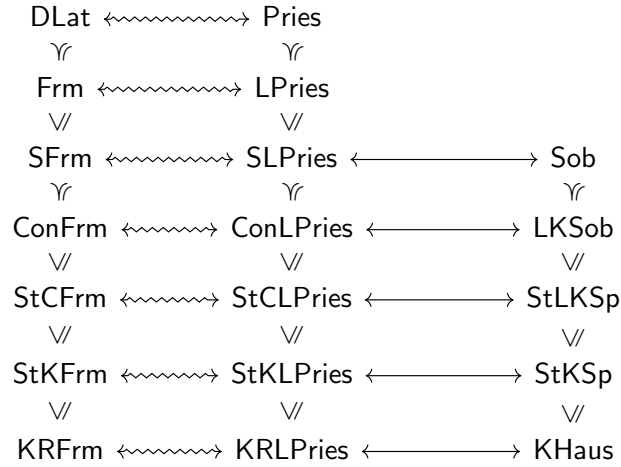
\begin{figure}[H] 
\centering
\begin{tikzcd}[ampersand replacement=\&, column sep={8em,between origins}, row sep=1em]
    \cat{DLat} \ardual \& \cat{Pries}\\
    \Frm \ardual \arsemirestrict \& \LPries \arsemirestrict \\
    \SFrm \ardual \arrestrict \& \SL \arrestrict \areqv \& \Sob \\
    \CFrm \arsemirestrict \ardual \& \CL \areqv \arsemirestrict \& \LCSob \arsemirestrict \\
    \SCFrm  \ardual\arrestrict \& \SCL \areqv\arrestrict \& \SLCSp \arrestrict\\
    \SKFrm  \ardual\arrestrict \& \SKL \areqv\arrestrict\& \SKSp\arrestrict\\
    \KRFrm  \ardual \arrestrict \& \KRL \areqv \arrestrict\& \KHaus\arrestrict
\end{tikzcd}
\caption{Equivalences and dual equivalences between various categories of frames, L-spaces, and sober spaces.\label{diagram 2}}
\end{figure}

\begin{table}[H]
\centering
\begin{tabular}{llll}
    \toprule
    \bf Category & \bf Objects & \bf Morphisms \\
    \midrule
    \LPries & L-spaces  (Def.~\ref{def: L}) & L-morphisms (Def.~\ref{def: L})\\
    \SL & L-spatial L-spaces  (Def.~\ref{def: SL}) & L-morphisms\\
   \CL & continuous L-spaces (Def.~\ref{def: packed}) & proper L-morphisms (Def.~\ref{def: proper L-morphism})\\
   \SCL & stably continuous L-spaces (Def.~\ref{def: StCL}) & proper L-morphisms \\
   \SKL & stably L-compact L-spaces (Def.~\ref{def: TPL}) & proper L-morphisms \\
   \KRL & L-compact L-regular L-spaces (Def.~\ref{def: regular})  & L-morphisms\\
    \bottomrule
\end{tabular}
\caption{Categories of L-spaces.\label{table: L-spaces}}
\end{table}

We conclude the paper by proving that every L-morphism $f : X_1 \to X_2$ from an L-compact L-space to an L-regular L-space satisfies $\downset f(x) = f(\downset x)$ for each $x \in X_1$. This result was first proved in \cite[Cor.~4.3]{BezhanishviliGabelaiaJibladze2016} utilizing that each frame homomorphism from a regular frame to a compact frame is closed (see \cite[Lem.~4.1]{BezhanishviliGabelaiaJibladze2016}).
We give alternative proofs of both Lemma~4.1 and Corollary~4.3 of \cite{BezhanishviliGabelaiaJibladze2016} using the language of Priestley spaces.
We start by the following two lemmas.

\begin{lemma}[{\cite[Rem.~3.7]{BezhanishviliGabelaiaJibladze2016}}] \label{lemma:equal min}
    Let $X$ be an L-regular L-space and $Y$ its localic part. Then $\min(D)= \min(D')$ implies $D = D'$ for all clopen downsets $D, D'$ of $X$.
\end{lemma}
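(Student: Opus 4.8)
The plan is to prove the contrapositive: assuming $D \neq D'$, I will produce a minimal element lying in one but not the other, so that $\min(D) \neq \min(D')$. By symmetry it suffices to find $x \in \min(D)$ with $x \notin D'$ (or vice versa). Since $D \neq D'$, without loss of generality there is a point $z \in D \setminus D'$. The goal is to push $z$ down to a minimal point of $D$ that still avoids $D'$, using L-regularity to control how far down we can go while staying outside the clopen downset $D'$.

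First I would use Priestley separation: since $D'$ is a clopen downset and $z \notin D'$, there is a clopen upset $U$ with $z \in U$ and $U \cap D' = \varnothing$ (equivalently $D' \subseteq U^c$, and $U^c$ is a clopen downset). Now I want to work inside $D$. The key idea is to apply L-regularity to the clopen \emph{downset} $D$, or rather to reason about minimal elements below $z$. By \cref{lemma:properties-Priestley-3}, since $D$ is a closed downset, for the point $z \in D$ there exists $y \in \min(D)$ with $y \leq z$; the difficulty is that such a $y$ need not avoid $D'$. This is where L-regularity must enter: I expect to use the implication $(4)\Rightarrow(2)$ or $(4)\Rightarrow(3)$ of \cref{lemma:regularity} applied to the clopen upset $U$ (with $z \in U \cap Y$-type reasoning) to separate $z$ from $U^c \supseteq D'$ by disjoint clopen upsets, and then descend to a minimal element of $D$ trapped inside the separating clopen upset.

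More concretely, the main maneuver I anticipate is: because $X$ is L-regular, every Scott upset is a biset (\cref{lemma:regular-implies-scott-is-biset}) and $Y \subseteq \min(X)$ (\cref{lemma:Y-subset-Min}). I would locate a localic point or minimal point $y \leq z$ using the clopen upset $U$ that isolates $z$ from $D'$: since $\downset \upset(U \cap Y) \subseteq U$ by L-regularity, and $D$ is a downset containing $z$, I can find a minimal point $y$ of $D$ with $y \leq z$ and $y \in U$, forcing $y \notin D'$ (as $U \cap D' = \varnothing$). Then $y \in \min(D) \setminus D'$, and since $D'$ is a downset containing $\min(D')$, this gives $y \notin \min(D')$, so $\min(D) \neq \min(D')$.

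The hard part will be ensuring that the minimal element $y$ below $z$ can be chosen to remain inside the separating clopen upset $U$ (hence outside $D'$); the naive application of \cref{lemma:properties-Priestley-3} only yields \emph{some} minimal point below $z$, with no control over its location. The crux is to combine the separation clopen set $U$ with L-regularity so that the descent to a minimal point stays within $U$. I would handle this by intersecting: $D \cap U$ need not be a downset, but I can instead consider that $z \in \downset\upset z$ and use \cref{cor:containment-reg-1} — if $z \in \reg U$ then $\downset \upset z \subseteq U$, so any minimal point below $z$ lies in $U$. Establishing $z \in \reg U$ (possibly after shrinking $U$ using L-regularity of a suitable clopen upset) is therefore the technical heart of the argument, and it is exactly what L-regularity of $X$ is designed to supply.
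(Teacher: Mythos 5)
The step you defer to the end --- establishing $z \in \reg U$ --- is not just the technical heart of the argument; it is false in general, so the proof as structured cannot be completed. The point $z$ is an \emph{arbitrary} element of $D \setminus D'$, and L-regularity only makes $\reg U$ \emph{dense} in $U$, never equal to $U$ unless $U$ is a biset (if $U \subseteq \reg U$, then by compactness $U$ is a finite union of clopen upsets $V$ with $\downset V \subseteq U$, so $\downset U \subseteq U$). So whenever an L-regular space has a clopen upset $U$ that is not a downset (e.g.\ the Priestley space of $\mathcal O(\mathbb R)$), you can take $D = X$, $D' = U^c$, and $z \in U \setminus \reg U$: then no clopen upset $U'$ disjoint from $D'$ satisfies $z \in \reg U'$, since $U' \subseteq U$ and $\reg$ is monotone, so $\reg U' \subseteq \reg U \not\ni z$. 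In particular, ``shrinking $U$'' makes matters worse, never better. The detour through localic points in your third paragraph is also unavailable: an L-regular L-space need not be L-spatial --- the Priestley space of an atomless complete Boolean algebra (a regular frame) is L-regular with $Y = \varnothing$ --- so conditions (1)--(3) of \cref{lemma:regularity} can be vacuous and there need be no localic point below $z$ at all. (For what it is worth, the paper does not prove this lemma but cites \cite[Rem.~3.7]{BezhanishviliGabelaiaJibladze2016}, so the comparison below is with the intended argument.)

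The missing idea is to give up on descending from $z$ itself and instead use the density clause of L-regularity (condition (4) of \cref{lemma:regularity}, i.e.\ the definition) to produce a better-placed point. With $U = (D')^c$, the set $D \cap U$ is a nonempty open subset of $X$ contained in $U$ (it contains $z$), so density of $\reg U$ in $U$ yields some $w \in D \cap U \cap \reg U$. By definition of $\reg U$ there is $V \in \clopup(X)$ with $w \in V$ and $\downset V \subseteq U$, whence $\downset w \subseteq U$. Now descend from $w$ rather than from $z$: since $D$ is a closed downset and $w \in D$, \cref{lemma:properties-Priestley-3} gives $y \in \min(D)$ with $y \leq w$, and then $y \in \downset w \subseteq U = (D')^c$. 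Thus $y \in \min(D)$ but $y \notin D' \supseteq \min(D')$, so $\min(D) \neq \min(D')$, completing the contrapositive. Note that this argument needs neither $Y$ nor \cref{cor:containment-reg-1}: once you trade $z$ for $w$, the containment $\downset w \subseteq U$ comes straight from the definition of the regular part.
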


\begin{lemma}
    Let $X_1,X_2$ be L-spaces and $Y_1,Y_2$ their respective localic parts. If ${f : X_1 \to X_2}$ is an L-morphism and $D$ is a clopen downset of $X_1$, then
    \begin{enumerate}[ref=\thelemma(\arabic*)]
\item $\downset f(D)$ is clopen.  \label[lemma]{lemma: downset-fD-clopen}
    \end{enumerate}
    If in addition $X_1$ is L-compact and $X_2$ is L-regular, then
    \begin{enumerate}[resume,ref=\thelemma(\arabic*)]
        \item $\min \downset f(D) \subseteq  f(D) \cap Y_2$. \label[lemma]{lemma:same-min}
        \item $\downset f(D) = f(D)$.
        \item $f(D)$ is a clopen downset. \label[lemma]{lemma:f(D)}
    \end{enumerate}
\end{lemma}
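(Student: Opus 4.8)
The plan is to prove the four parts in order, using (1) to get clopenness, (2) to control minimal elements, and then leveraging L-regularity of $X_2$ together with L-compactness of $X_1$ to collapse $\downset f(D)$ onto $f(D)$.

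For part (1), first I would note that $D$ is clopen, hence compact, so $f(D)$ is a compact, thus closed, subset of the Hausdorff space $X_2$; consequently $\downset f(D)$ is closed by \cref{lemma:properties-Priestley-2.5}. To get openness I would put $W := (\downset f(D))^c$, an open upset, and apply the defining L-morphism identity $f^{-1}\cl W = \cl f^{-1}W$ (see \cref{def:l-morph}). Since $f^{-1}(\downset f(D)) \supseteq D$, we have $f^{-1}W \subseteq D^c$, a clopen upset, so $f^{-1}\cl W = \cl f^{-1}W \subseteq \cl D^c = D^c$; that is, $f(d)\notin\cl W$ for every $d\in D$. If $\cl W \ne W$, choose $y \in \cl W \cap \downset f(D)$, so $y \le f(d)$ for some $d\in D$; because $\cl W$ is an upset (\cref{lem:esakia-int-downsets}), this forces $f(d)\in\cl W$, a contradiction. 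Hence $\cl W = W$, so $W$ is clopen and $\downset f(D)=W^c$ is clopen.

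For part (2), I would first observe that $\min D \subseteq \min X_1$ (as $D$ is a downset) and that $\min X_1 \subseteq Y_1$ since $X_1$ is L-compact; thus $M := f(\min D) \subseteq f(Y_1) \subseteq Y_2$ by \cref{lem: f restricts}, and $Y_2 \subseteq \min X_2$ by \cref{lemma:Y-subset-Min}, giving $M \subseteq \min X_2$. Since $D$ is closed, \cref{lemma:properties-Priestley-3} yields $D \subseteq \upset\min D$, so $f(D) \subseteq \upset M$ and $\downset f(D) \subseteq \downset\upset M$. The decisive point is that each $m\in M\subseteq Y_2$ makes $\upset m$ a Scott upset, hence a biset by \cref{lemma:regular-implies-scott-is-biset}; being a downset, $\downset\upset m = \upset m$, whence $\downset\upset M = \upset M$ and $\downset f(D) \subseteq \upset M$. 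Then any $y\in\min\downset f(D)$ satisfies $y\ge m$ for some $m\in M$, and since both are minimal in $X_2$ we get $y = m \in f(\min D) \subseteq f(D)\cap Y_2$.

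Parts (3) and (4) come together, and part (3) is where I expect the real difficulty. It suffices to show $f(D)$ is a downset, for then $f(D)=\downset f(D)$, which is clopen by part (1); this settles both (3) and (4) at once. Given $z\in\downset f(D)$, I would use \cref{lemma:properties-Priestley-3} to pick $y\in\min\downset f(D)$ with $y\le z$, so by part (2) we have $y=f(d_0)\in f(D)\cap Y_2$ with $d_0\in\min D$. The obstacle is upgrading this sandwich $f(d_0)\le z\le f(d)$ to $z\in f(D)$. I would attack it through L-regularity, via \cref{lemma:equal min}: both $\downset f(D)$ (clopen by part (1)) and the closed set $f(D)$ share the minimal elements supplied by part (2), and in an L-regular space a clopen downset is pinned down by its minimal elements, which should force $f(D)=\downset f(D)$. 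A complementary tool is to pull back the biset $\upset y$ (a biset by \cref{lemma:regular-implies-scott-is-biset}) to $f^{-1}(\upset y)$, which is a closed biset in the L-compact $X_1$ and hence a Scott upset by \cref{lemma:tightly-packed-implies-biset-is-scott}, and to use this to locate a preimage of $z$ inside $D$. I anticipate that making this last step fully rigorous—certifying that the sandwiched point $z$ genuinely lies in the image—will be the crux of the argument.
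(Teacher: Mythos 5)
Your parts (1) and (2) are correct and essentially reproduce the paper's own arguments: part (1) is the same computation with the complementary open upset $W=(\downset f(D))^c$ and the L-morphism identity (the paper phrases it as $\downset f(D)=(\cl W)^c$ rather than $\cl W=W$, which is the same thing), and part (2) rests on exactly the same ingredients as the paper, namely $\min D\subseteq\min X_1\subseteq Y_1$, \cref{lem: f restricts}, and the fact that in an L-regular space the Scott upset $\upset f(y)$ is a biset (\cref{lemma:regular-implies-scott-is-biset}).

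Part (3), however, contains a genuine gap, and it sits exactly where you predicted. Your plan is to apply \cref{lemma:equal min} to the pair $f(D)$ and $\downset f(D)$ on the grounds that they share minimal elements. But \cref{lemma:equal min} applies only to pairs of \emph{clopen downsets}, and $f(D)$ is not known to be open, nor to be a downset --- that is precisely the content of (3) and (4) you are trying to prove, so this application is circular. The slogan ``a clopen downset is pinned down by its minimal elements'' only pins it down within the class of clopen downsets, which $f(D)$ has not yet been shown to belong to. What is actually needed is a density statement: since $f(D)$ is closed and $\min\downset f(D)\subseteq f(D)$ by part (2), it suffices to show $\downset f(D)=\cl\bigl(\min\downset f(D)\bigr)$, for then $\downset f(D)\subseteq\cl f(D)=f(D)$. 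Proving this density is where the paper does the real work: assuming the contrary, it chooses clopen upsets $U,V$ with $\varnothing\neq U\cap V^c\subseteq\downset f(D)$ and $U\cap V^c\cap\cl\min\downset f(D)=\varnothing$, and then constructs \emph{two} sets that genuinely are clopen downsets, $A=\downset(U\cap V^c)$ and $B=A\setminus(U\cap V^c)$, verifying $A\neq B$ but $\min A=\min B$, which contradicts \cref{lemma:equal min}. Your auxiliary idea of pulling back $\upset y$ to a Scott upset of $X_1$ via \cref{lemma:tightly-packed-implies-biset-is-scott} does not close the gap either: nothing in the hypotheses lets you lift a sandwiched point $z\in\downset f(D)\setminus f(D)$ to a preimage in $D$ pointwise, which is why the paper argues by density and contradiction rather than point by point.
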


\begin{proof}
    (1) Since $f$ is a closed map, $\downset f(D)$ is closed, and hence $U := (\downset f(D))^c$ is an open upset. Because $D \cap f^{-1}(U) = \varnothing$ and $f$ is an L-morphism, $D \cap f^{-1}(\cl U) = D \cap \cl f^{-1}(U) = \varnothing$. Therefore, $f(D) \cap \cl U = \varnothing$, and so $\downset f(D) \cap \cl U = \varnothing$ since $\cl U$ is an upset. Consequently, $\downset f(D) = (\cl U)^c$, and hence $\downset f(D)$ is clopen (because $\cl U$ is clopen).
    
    (2) Suppose $z \in \min \downset f(D)$. Then $z \le f(x)$ for some $x \in D$. Since $D$ is closed, there is $y \in \min D$ with $y\le x$ (see \cref{lemma:properties-Priestley-3}). Because $X_1$ is L-compact, $\min D \subseteq \min X_1 \subseteq Y_1$. Therefore, $y\in Y_1$, and so $f(y) \in Y_2$ by \cref{lem: f restricts}. But then $\upset f(y)$ is a Scott upset. Thus, since $X_2$ is L-regular, $\upset f(y)$ is a biset by \cref{lemma:regular-implies-scott-is-biset}. Consequently, $f(x) \in \upset f(y)$ implies $\downset f(x) \subseteq \upset f(y)$. Therefore, $z \in \upset f(y)$, so $f(y) \leq z$. But then $f(y) = z$ by the minimality of $z$. Thus, $z \in  f(D) \cap Y_2$.
    
    (3) Clearly $f(D) \subseteq \downset f(D)$. To see the reverse inclusion, since $f(D)$ is a closed subset of $X_2$, it is sufficient to show that $\downset f(D)$ is the closure of $\min \downset f(D)$ because the latter is contained in $f(D)$ by (2).
Suppose otherwise. Since $\downset f(D)$ is clopen by (1), $\downset f(D) \setminus \cl \min \downset f(D)$ is a nonempty open set. Therefore, by \cref{lemma:properties-Priestley-0}, there are $U,V \in \clopup(X_2)$ such that
$\varnothing \neq U \cap V^c \subseteq \downset f(D)$ and $U \cap V^c \cap\cl\min \downset f(D)=\varnothing$. Let $A = \downset (U \cap V^c)$ and $B = A \setminus (U \cap V^c)$. Then $A\ne B$. Clearly $A$ is a clopen downset and $B$ is clopen. We show that $B$ is also a downset and $\min A=\min B$. We first show that $B$ is a downset. Let $x \in B$ and $y \leq x$. If $y \not \in B$ then $y \in U \cap V^c$. Since $x \in \downset(U \cap V^c)$, there exists $z \in U \cap V^c$ such that $x \leq z$. Because $U$ is an upset, from $y \in U$ and $y \leq x$ it follows that $x \in U$. Since $V^c$ is a downset, from $z \in V^c$ and $x \leq z$ it follows that $x \in V^c$. Therefore, $x \in U \cap V^c$, and so $x \not \in B$, a contradiction. Thus, $B$ is a downset.

We next show that $\min A = \min B$. Clearly $\min B\subseteq\min A$. If $x \in \min A$, then $x\in \min \downset f(D)$, so $x \not \in U$. Therefore, $x \in \min B$. Thus, $A$ and $B$ are clopen downsets such that $A \ne B$ but $\min A=\min B$. 
This contradicts \cref{lemma:equal min} because $X_2$ is L-regular.

(4) Apply (1) and (3).
\end{proof}

Consequently, 
the same argument as in the proof of \cite[Cor.~4.3]{BezhanishviliGabelaiaJibladze2016} yields:

\begin{theorem}
    Let $f : X_1 \to X_2$ be a proper L-morphism between an L-compact L-space $X_1$ and an L-regular L-space $X_2$. Then $f(\downset x) = \downset f(x)$ for each $x \in X_1$.
\end{theorem}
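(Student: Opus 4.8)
The plan is to prove $f(\downset x) = \downset f(x)$ by establishing both inclusions. The inclusion $f(\downset x) \subseteq \downset f(x)$ is immediate from $f$ being order-preserving: if $y \leq x$ then $f(y) \leq f(x)$, so $f(y) \in \downset f(x)$. The substance lies in the reverse inclusion $\downset f(x) \subseteq f(\downset x)$.

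For the reverse inclusion, the key idea is to reduce from a single point $x$ to a clopen downset, so that \cref{lemma:f(D)} can be applied. First I would set $D = \downset x$; however, this need not be clopen in general. Instead, the natural approach is to express $\downset x$ as an intersection of clopen downsets and use the fact that $f(\downset x)$ is closed (since $f$ is a closed map) together with compactness. The cleaner route is to take the family $\mathcal{D} = \{D \in \clopup\text{-downsets of } X_1 \mid x \in D\}$, which is down-directed with $\bigcap \mathcal{D} = \downset x$ by \cref{lemma:properties-Priestley-2}. Then for each such $D$, \cref{lemma:f(D)} gives that $f(D)$ is a clopen downset, and in particular $\downset f(D) = f(D)$.

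The main step is then to pass the intersection through $f$. I would argue that $\downset f(x) \subseteq \bigcap\{f(D) \mid D \in \mathcal{D}\}$ and that this intersection equals $f(\downset x)$. For the latter, given $w \in \bigcap\{f(D) \mid D \in \mathcal{D}\}$, the preimages $f^{-1}(w) \cap D$ form a down-directed family of nonempty closed sets (each nonempty since $w \in f(D)$), so by compactness of $X_1$ their intersection is nonempty; any point in $\bigcap_D (f^{-1}(w) \cap D) = f^{-1}(w) \cap \downset x$ witnesses $w \in f(\downset x)$. To see $\downset f(x) \subseteq \bigcap_D f(D)$: if $w \leq f(x)$, then for each $D \in \mathcal{D}$ we have $x \in D$, so $f(x) \in f(D)$, and since $f(D)$ is a downset (by \cref{lemma:f(D)}) and $w \leq f(x)$, we get $w \in f(D)$.

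\textbf{The main obstacle} I anticipate is the compactness argument ensuring the intersection $f^{-1}(w) \cap \downset x$ is nonempty; one must verify that the sets $f^{-1}(w) \cap D$ are genuinely nonempty closed subsets and that down-directedness of $\mathcal{D}$ is preserved so that the finite intersection property applies. Here the hypotheses are used essentially: the properness of $f$ is not directly needed for this argument, but L-compactness of $X_1$ and L-regularity of $X_2$ enter precisely through \cref{lemma:f(D)}, which guarantees each $f(D)$ is a clopen \emph{downset}—this downset property is exactly what lets $w \leq f(x)$ propagate into membership $w \in f(D)$. Without L-regularity of $X_2$, $f(D)$ would only be closed rather than a downset, and the argument would collapse.
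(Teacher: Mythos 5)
Your proof is correct and follows essentially the same route as the paper: the paper's proof is a pointer to the argument of \cite[Cor.~4.3]{BezhanishviliGabelaiaJibladze2016}, which—given \cref{lemma:f(D)}—is precisely your argument of writing $\downset x$ as the down-directed intersection of clopen downsets containing $x$, pushing that intersection through $f$ via compactness and the finite intersection property, and using that each $f(D)$ is a downset to capture $\downset f(x)$. Your observation that properness is not needed is also consistent with the paper, since \cref{thm: proper} shows every L-morphism from an L-compact to an L-regular L-space is automatically proper.
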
  

We recall that a frame homomorphism $h : L \to M$ is \emph{closed} if 
$
    r(h(a) \vee b) \leq a \vee r(b)
$
for all $a \in L$ and $b \in M$, where $r : M \to L$ is the right adjoint of $h$. We close by an alternative proof of \cite[Lem.~4.1]{BezhanishviliGabelaiaJibladze2016}.

\begin{theorem}
    If $h : L \to M$ is a frame homomorphism from a regular frame $L$ to a compact frame $M$, then $h$ is closed.
\end{theorem}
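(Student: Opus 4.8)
The plan is to pass to the Priestley dual side and read off closedness from the behaviour of the dual L-morphism on clopen downsets. Write $X_L$ and $X_M$ for the Priestley spaces of $L$ and $M$ and $\varphi_L,\varphi_M$ for the respective Stone maps, and let $f=\functor X(h):X_M\to X_L$ be the dual L-morphism, so that $f^{-1}(\varphi_L(a))=\varphi_M(h(a))$ for all $a\in L$. By \cref{lemma:compactness} the compactness of $M$ means $X_M$ is L-compact, and by \cref{thm: regular is rl} the regularity of $L$ means $X_L$ is L-regular. Hence \cref{lemma:f(D)} applies and tells us that $f$ maps every clopen downset of $X_M$ to a clopen downset of $X_L$. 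This is the only place where the hypotheses on $L$ and $M$ enter, and it is the heart of the argument.

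First I would pin down the Priestley description of the right adjoint $r$ of $h$ (which exists since $h$ preserves arbitrary joins). I claim that for every $b\in M$,
\[
\varphi_L(r(b)) = \bigl(f[\varphi_M(b)^c]\bigr)^c .
\]
Indeed, $r(b)=\bigvee\{a\in L\mid h(a)\le b\}$, so by \cref{lemma:joins-in-priestley} the set $\varphi_L(r(b))$ is the closure of the union of all clopen upsets $U$ of $X_L$ with $f^{-1}(U)\subseteq\varphi_M(b)$. Each such $U$ is contained in $f_!(\varphi_M(b)):=(f[\varphi_M(b)^c])^c$, the largest subset whose $f$-preimage lies in $\varphi_M(b)$. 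Since $\varphi_M(b)^c$ is a clopen downset, \cref{lemma:f(D)} makes $f[\varphi_M(b)^c]$ a clopen downset, so $f_!(\varphi_M(b))$ is itself a clopen upset satisfying $f^{-1}(f_!(\varphi_M(b)))\subseteq\varphi_M(b)$; thus it already occurs in the union, the union equals $f_!(\varphi_M(b))$, and being clopen it is closed. This establishes the displayed formula. I expect this to be the main obstacle, since without the hypotheses the union need not be clopen and taking the closure can strictly enlarge it, so the clean formula for $r$ fails precisely when \cref{lemma:f(D)} is unavailable.

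With the formula in hand, closedness reduces to a purely set-theoretic fact about the right adjoint $f_!$ of $f^{-1}$. Using $\varphi_M(h(a)\vee b)=f^{-1}(\varphi_L(a))\cup\varphi_M(b)$ together with the formula above, the desired inequality $r(h(a)\vee b)\le a\vee r(b)$ translates, after taking complements, into
\[
\varphi_L(a)^c\cap f[\varphi_M(b)^c]\subseteq f\bigl[f^{-1}(\varphi_L(a)^c)\cap\varphi_M(b)^c\bigr].
\]
This holds for an arbitrary map $f$: if $y$ lies in the left-hand side, then $y=f(x)$ for some $x\in\varphi_M(b)^c$, and $y\in\varphi_L(a)^c$ forces $x\in f^{-1}(\varphi_L(a)^c)$, whence $y\in f[f^{-1}(\varphi_L(a)^c)\cap\varphi_M(b)^c]$. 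Translating back yields $\varphi_L(r(h(a)\vee b))\subseteq\varphi_L(a)\cup\varphi_L(r(b))=\varphi_L(a\vee r(b))$, and therefore $r(h(a)\vee b)\le a\vee r(b)$, so $h$ is closed. The whole content thus concentrates in the second paragraph, with the final step being a formal verification valid for any continuous map.
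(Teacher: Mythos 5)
Your proof is correct and takes essentially the same route as the paper's: both pass to the dual L-morphism $f : X_M \to X_L$, use \cref{lemma:f(D)} (L-compactness of $X_M$ plus L-regularity of $X_L$) to establish the identical formula $\varphi_L(r(b)) = \bigl(f[\varphi_M(b)^c]\bigr)^c$, and finish with an elementary preimage-point argument. The only difference is cosmetic: the paper's endgame detours through the regular part $\reg$ and \cref{cor:containment-reg-1}, whereas you take complements and invoke the identity $f[D]\cap E \subseteq f[D\cap f^{-1}(E)]$, valid for any map, which slightly streamlines the conclusion.
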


\begin{proof}
    Let $f : X_M \to X_L$ be the dual L-morphism between the Priestley spaces of $M$ and $L$, respectively. Suppose $a \in L$ and $b \in M$. Since $X_L$ is L-regular by \cref{thm: regular is rl}, it suffices to show that
    \[
        \reg r(h(a) \vee b) \subseteq \varphi(a) \cup \varphi(r(b)).
    \]
    Let $d\in M$. Since $r$ is right adjoint to $h$, we have $r(d) = \bigvee\{c\in L \mid h(c)\le d\}$. Therefore, since $\varphi(h(c))= f^{-1}(\varphi(c))$, by \cref{lemma:joins-in-priestley}, we have
    \begin{align*}
        \varphi(r(d))
        &= \varphi\left(\bigvee\{c\in L \mid h(c)\le d\}\right)\\
        &= \cl \left(\bigcup\{\varphi(c) \mid f^{-1}(\varphi(c)) \subseteq \varphi(d)\}\right)\\
        &= \cl \left(\bigcup\{\varphi(c) \mid \varphi(c) \subseteq X_L \setminus f(X_M \setminus \varphi(d))\}\right)\\
        &= X_L \setminus f(X_M \setminus \varphi(d)),
    \end{align*}
    where the last equality follows from \cref{lemma:f(D)}. 
    Let $x \in \reg r(h(a) \vee b)$. We show that $x\in \varphi(a) \cup \varphi(r(b))$. By \cref{cor:containment-reg-1},
    \begin{align*}
        x \in \reg r(h(a) \vee b) 
        \iff& \downset\upset x \subseteq X_L \setminus f\big(X_M \setminus (f^{-1}\varphi(a) \cup \varphi(b))\big)\\
        \iff& f^{-1}(\downset \upset x)\subseteq f^{-1}\varphi(a) \cup \varphi(b).
    \end{align*}
     Suppose $x \notin \varphi(r(b))$. 
    Then $x\notin X_L \setminus f(X_M \setminus \varphi(b))$, so $f^{-1}(x) \not \subseteq \varphi(b)$. Therefore, there is 
    $z \in X_M$ such that $f(z)=x$ and $z \not \in \varphi(b)$. From $f(z)=x$ it follows that $z \in f^{-1}(\downset \upset x) \subseteq f^{-1}\varphi(a) \cup \varphi(b)$. Hence, from $z \not \in \varphi(b)$ it follows that $z\in f^{-1}\varphi(a)$, and so $x = f(z) \in \varphi(a)$, concluding the proof.
\end{proof}

\section*{Acknowledgements}
We would like to thank the referees for careful reading and useful suggestions, which have improved the paper.

\bibliographystyle{abbrv}
\bibliography{ref}

\begin{thebibliography}{10}

\bibitem{AlechinaMendlerDePaiva2001}
N.~Alechina, M.~Mendler, V.~de~Paiva, and E.~Ritter.
\newblock Categorical and {K}ripke semantics for constructive {S}4 modal logic.
\newblock In {\em Computer science logic ({P}aris, 2001)}, volume 2142 of {\em
  Lecture Notes in Comput. Sci.}, pages 292--307. Springer, Berlin, 2001.

\bibitem{ArtemovProtopopescu2016}
S.~Artemov and T.~Protopopescu.
\newblock Intuitionistic epistemic logic.
\newblock {\em Rev. Symb. Log.}, 9(2):266--298, 2016.

\bibitem{AvilaBezhanishviliMorandiZaldivar2020}
F.~\'{A}vila, G.~Bezhanishvili, P.~J. Morandi, and A.~Zald\'{\i}var.
\newblock When is the frame of nuclei spatial: a new approach.
\newblock {\em J. Pure Appl. Algebra}, 224(7):106302, 20, 2020.

\bibitem{AvilaBezhanishviliMorandiZaldivar2021}
F.~\'{A}vila, G.~Bezhanishvili, P.~J. Morandi, and A.~Zald\'{\i}var.
\newblock The frame of nuclei on an {A}lexandroff space.
\newblock {\em Order}, 38(1):67--78, 2021.

\bibitem{Banaschewski1981}
B.~Banaschewski.
\newblock Coherent frames.
\newblock In {\em Proceedings of the Conference on Topological and Categorical
  Aspects of Continuous Lattices, Lecture Notes in Math., Vol. 871}, pages
  1--11. Springer-Verlag, Berlin, 1981.

\bibitem{BanaschweskiMulvey1980}
B.~Banaschewski and C.~J. Mulvey.
\newblock Stone-\v{C}ech compactification of locales. {I}.
\newblock {\em Houston J. Math.}, 6(3):301--312, 1980.

\bibitem{BezhanishviliBezhanishvili2008}
G.~Bezhanishvili and N.~Bezhanishvili.
\newblock Profinite {H}eyting algebras.
\newblock {\em Order}, 25(3):211--227, 2008.

\bibitem{BezhanishviliGabelaiaJibladze2013}
G.~Bezhanishvili, D.~Gabelaia, and M.~Jibladze.
\newblock Funayama's theorem revisited.
\newblock {\em Algebra Universalis}, 70(3):271--286, 2013.

\bibitem{BezhanishviliGabelaiaJibladze2016}
G.~Bezhanishvili, D.~Gabelaia, and M.~Jibladze.
\newblock Spectra of compact regular frames.
\newblock {\em Theory Appl. Categ.}, 31:Paper No. 12, 365--383, 2016.

\bibitem{BezhGhilardi2007}
G.~Bezhanishvili and S.~Ghilardi.
\newblock An algebraic approach to subframe logics. {I}ntuitionistic case.
\newblock {\em Ann. Pure Appl. Logic}, 147(1-2):84--100, 2007.

\bibitem{BezhanishviliHolliday2019}
G.~Bezhanishvili and W.~H. Holliday.
\newblock A semantic hierarchy for intuitionistic logic.
\newblock {\em Indag. Math. (N.S.)}, 30(3):403--469, 2019.

\bibitem{BezhanishviliMelzer2022}
G.~Bezhanishvili and S.~Melzer.
\newblock Hofmann--{M}islove through the lenses of {P}riestley.
\newblock {\em Semigroup Forum}, 105(3):825--833, 2022.

\bibitem{Cornish1975}
W.~H. Cornish.
\newblock On {H}. {P}riestley's dual of the category of bounded distributive
  lattices.
\newblock {\em Mat. Vesnik}, 12(27)(4):329--332, 1975.

\bibitem{DickmannSchwartzTressl2019}
M.~Dickmann, N.~Schwartz, and M.~Tressl.
\newblock {\em Spectral spaces}, volume~35 of {\em New Mathematical
  Monographs}.
\newblock Cambridge University Press, Cambridge, 2019.

\bibitem{DowkerPapert1966}
C.~H. Dowker and D.~Papert.
\newblock Quotient frames and subspaces.
\newblock {\em Proc. London Math. Soc. (3)}, 16:275--296, 1966.

\bibitem{Engelking1989}
R.~Engelking.
\newblock {\em General topology}, volume~6 of {\em Sigma Series in Pure
  Mathematics}.
\newblock Heldermann Verlag, Berlin, second edition, 1989.
\newblock Translated from the Polish by the author.

\bibitem{Esakia1974}
L.~L. Esakia.
\newblock Topological {K}ripke models.
\newblock {\em Soviet Math. Dokl.}, 15:147--151, 1974.

\bibitem{Esakia2019}
L.~L. Esakia.
\newblock {\em Heyting algebras}, volume~50 of {\em Trends in Logic---Studia
  Logica Library}.
\newblock Springer, Cham, 2019.
\newblock Edited by G. Bezhanishvili and W. H. Holliday, Translated from the
  Russian by A. Evseev.

\bibitem{FairtloughMendler1995}
M.~Fairtlough and M.~Mendler.
\newblock An intuitionistic modal logic with applications to the formal
  verification of hardware.
\newblock In {\em Computer science logic ({K}azimierz, 1994)}, volume 933 of
  {\em Lecture Notes in Comput. Sci.}, pages 354--368. Springer, Berlin, 1995.

\bibitem{FairtloughMendler1997}
M.~Fairtlough and M.~Mendler.
\newblock Propositional lax logic.
\newblock {\em Inform. and Comput.}, 137(1):1--33, 1997.

\bibitem{GargAbadi2008}
D.~Garg and M.~Abadi.
\newblock A modal deconstruction of access control logics.
\newblock In {\em Foundations of Software Science and Computational
  Structures}, volume 4962 of {\em Lecture Notes in Comput. Sci.}, pages
  216--230. Springer, Berlin, 2008.

\bibitem{Compendium2003}
G.~Gierz, K.~H. Hofmann, K.~Keimel, J.~D. Lawson, M.~Mislove, and D.~S. Scott.
\newblock {\em Continuous lattices and domains}, volume~93 of {\em Encyclopedia
  of Mathematics and its Applications}.
\newblock Cambridge University Press, Cambridge, 2003.

\bibitem{GierzKeimel1977}
G.~Gierz and K.~Keimel.
\newblock A lemma on primes appearing in algebra and analysis.
\newblock {\em Houston J. Math.}, 3(2):207--224, 1977.

\bibitem{Goldblatt1981}
R.~I. Goldblatt.
\newblock Grothendieck topology as geometric modality.
\newblock {\em Z. Math. Logik Grundlagen Math.}, 27(6):495--529, 1981.

\bibitem{Goldblatt2011}
R.~I. Goldblatt.
\newblock Cover semantics for quantified lax logic.
\newblock {\em J. Logic Comput.}, 21(6):1035--1063, 2011.

\bibitem{HofmannLawson1978}
K.~H. Hofmann and J.~D. Lawson.
\newblock The spectral theory of distributive continuous lattices.
\newblock {\em Trans. Amer. Math. Soc.}, 246:285--310, 1978.

\bibitem{HofmannMislove1981}
K.~H. Hofmann and M.~W. Mislove.
\newblock Local compactness and continuous lattices.
\newblock In {\em Continuous lattices}, pages 209--248. Springer, 1981.

\bibitem{Isbell1972}
J.~R. Isbell.
\newblock Atomless parts of spaces.
\newblock {\em Math. Scand.}, 31:5--32, 1972.

\bibitem{Johnstone1981}
P.~T. Johnstone.
\newblock The {G}leason cover of a topos. {II}.
\newblock {\em J. Pure Appl. Algebra}, 22(3):229--247, 1981.

\bibitem{Johnstone1982}
P.~T. Johnstone.
\newblock {\em Stone spaces}, volume~3 of {\em Cambridge Studies in Advanced
  Mathematics}.
\newblock Cambridge University Press, Cambridge, 1982.

\bibitem{MacLane1998}
S.~Mac~Lane.
\newblock {\em Categories for the working mathematician}, volume~5 of {\em
  Graduate Texts in Mathematics}.
\newblock Springer-Verlag, New York, second edition, 1998.

\bibitem{PicadoPultr2012}
J.~Picado and A.~Pultr.
\newblock {\em Frames and locales}.
\newblock Frontiers in Mathematics. Birkh\"{a}user/Springer Basel AG, Basel,
  2012.

\bibitem{Priestley1970}
H.~A. Priestley.
\newblock Representation of distributive lattices by means of ordered {S}tone
  spaces.
\newblock {\em Bull. London Math. Soc.}, 2:186--190, 1970.

\bibitem{Priestley1972}
H.~A. Priestley.
\newblock Ordered topological spaces and the representation of distributive
  lattices.
\newblock {\em Proc. London Math. Soc. (3)}, 24:507--530, 1972.

\bibitem{Priestley1984}
H.~A. Priestley.
\newblock Ordered sets and duality for distributive lattices.
\newblock In {\em Orders: description and roles ({L}'{A}rbresle, 1982)},
  volume~99 of {\em North-Holland Math. Stud.}, pages 39--60. North-Holland,
  Amsterdam, 1984.

\bibitem{PultrSichler1988}
A.~Pultr and J.~Sichler.
\newblock Frames in {P}riestley's duality.
\newblock {\em Cahiers Topologie G\'{e}om. Diff\'{e}rentielle Cat\'{e}g.},
  29(3):193--202, 1988.

\bibitem{PultrSichler2000}
A.~Pultr and J.~Sichler.
\newblock A {P}riestley view of spatialization of frames.
\newblock {\em Cahiers Topologie G\'{e}om. Diff\'{e}rentielle Cat\'{e}g.},
  41(3):225--238, 2000.

\bibitem{Schwartz2013}
N.~Schwartz.
\newblock Locales as spectral spaces.
\newblock {\em Algebra Universalis}, 70(1):1--42, 2013.

\bibitem{Simmons1982}
H.~Simmons.
\newblock A couple of triples.
\newblock {\em Topology Appl.}, 13(2):201--223, 1982.

\bibitem{Vickers1989}
S.~Vickers.
\newblock {\em Topology via logic}, volume~5 of {\em Cambridge Tracts in
  Theoretical Computer Science}.
\newblock Cambridge University Press, Cambridge, 1989.

\end{thebibliography}
\end{document}